 \newcommand{\edge}{\ar@{-}}
\numberwithin{equation}{section}
\theoremstyle{plain} \newtheorem{theorem}{Theorem}[section] \newtheorem{lemma}[theorem]{Lemma}
\newtheorem{proposition}[theorem]{Proposition} \newtheorem{corollary}[theorem]{Corollary}
\theoremstyle{definition} \newtheorem{definition}[theorem]{Definition} \newtheorem{example}[theorem]{Example}
\newtheorem{examples}[theorem]{Examples}
\newtheorem{rexamples}[theorem]{Remarks and Examples}
 \newtheorem{remarks}[theorem]{Remarks} \newtheorem*{remark*}{Remark} \newtheorem{question}[theorem]{Question}
\newcommand{\gnoc}{\mathrel{{\lower.2ex\hbox{$\backsim$}}\llap{\raise.45ex\hbox{=}}}}
\begin{document}

\title[The Cohen Macaulay property] {The Cohen Macaulay property for noncommutative rings}

\author[K.A. Brown]{K.A. Brown} \address{School of Mathematics and Statistics\\ University of Glasgow\\ Glasgow G12 8QW\\
Scotland.} \email{Ken.Brown@glasgow.ac.uk}

\author[M.J. MacLeod]{M.J. MacLeod}

\begin{abstract} Let $R$ be a noetherian ring which is a finite module over its centre $Z(R)$. This paper studies the consequences for $R$ of the hypothesis that it is a maximal Cohen Macaulay $Z(R)$-module. Old results are reviewed and a number of new results are proved. The additional hypothesis of homological grade symmetry is proposed as the appropriate extra lever needed to extend the classical commutative homological hierarchy to this setting, and results are given offering evidence in support of this proposal.

\end{abstract}

\maketitle

\section{Introduction} \label{intro}
\subsection{} \label{intro1} Let $R$ be a noetherian ring which is a finitely generated module over its centre $Z(R)$. We shall call $R$ $Z(R)$-\emph{Macaulay} if it is a maximal Cohen-Macaulay $Z(R)$-module, or, more generally, $C$-\emph{Macaulay} where $Z(R)$ is replaced by a central subring $C$. Rings with this property have been much studied over the past 40 years, starting with \cite{V}, continuing through, for example, \cite{BHM}, \cite{Lev}, \cite{GN}, to a host of more recent works. This ubiquity stems largely from the fact that many classes of noetherian PI rings of central concern in noncommutative algebra and representation theory satisfy the Macaulay condition - see Examples \ref{CMex} and $\S$5 for a far from complete list. This paper has several purposes:
\begin{enumerate}
\item[(i)] to review the basic properties of $Z(R)$-Macaulay rings;
\item[(ii)] to prove a number of new results concerning these rings;
\item[(iii)] to explain how to strengthen the $Z(R)$-Macaulay hypothesis so as to recover in this noncommutative setting the familiar homological hierarchy from commutative noetherian ring theory (regular$\Rightarrow$Gorenstein$\Rightarrow$Cohen-Macaulay, with reverse implications valid when relevant homological dimensions are finite);
\item[(iv)] to use this PI setting as a testing ground for speculation about how the Cohen-Macaulay hypothesis should be generalised in noncommutative algebra.
\end{enumerate}
Aims (i) and (ii) are covered in $\S\S$2,3 and reviewed in subsection \ref{intro2}, and aims (iii) and (iv) in $\S\S$4,5, reviewed in subsection \ref{intro3}.

\subsection{}\label{intro2} Call the ring $R$ \emph{equicodimensional} if every maximal ideal of $R$ has the same height; this condition has proved to be an important (if sometimes erroneously overlooked) hypothesis in applications of the $Z(R)$-Macaulay property. Regarding aim (i), we review the fundamental yoga of $C$-sequences and the interactions of the equicodimensionality and Krull homogeneity conditions, and examine the dependence of the $C$-Macaulay property on the choice of the central subring $C$, ($\S\S$\ref{basic}, \ref{equidim}, \ref{depend}). We also recall an important result of Goto and Nishida \cite{GN} on catenarity of $Z(R)$-Macaulay rings, Theorem \ref{chains}. Concerning (ii), the main new structural results obtained are as follows:
\begin{itemize}
  \item  Let $R$ be a $C$-Macaulay ring which is an affine $k$-algebra, $k$ a field. Then $R$ and $C$ are finite direct sums, $R = \oplus_i R_i \supseteq \oplus_i C_i = C$, with $R_i$ and $C_i$ equicodimensional and $R_i$ a $C_i$-Macaulay ring for all $i$ (Theorem \ref{affinesplit}).
  \item  Let $R$ be equicodimensional and $Z(R)$-Macaulay, and let $C$ be any commutative subring over which $R$ is a finitely generated (right or left) module. Then $R$ is $C$-projective (Theorem \ref{free}).
  \item  Let $R$ be prime noetherian and a finite module over $Z(R)$. Suppose that $R$ has finite global dimension, is $Z(R)$-Macaulay, and is height 1 Azumaya over $Z(R)$. Then the Azumaya locus of $R$ coincides with the smooth locus of $Z(R)$ (Theorem \ref{Az}).
\end{itemize}
Theorem \ref{free} is a noncommutative generalisation of a familiar characterisation of commutative Cohen-Macaulay rings. It is straightforward to prove when $C$ is central, but seems to be trickier for non-central $C$. The terms used in the statement of Theorem \ref{Az} are explained in $\S$\ref{loci}. It is an improvement of \cite[Theorem 3.8]{BGhom}, the latter having the additional requirement that $R$ is Auslander-regular. Theorem \ref{Az} is illustrated by an application to the Reconstruction Algebras of \cite{W} in $\S$\ref{recon}.

\subsection{Grade symmetry} \label{intro3} To discuss aims (iii) and (iv), recall that the \emph{homological grade} of a non-zero (right) $R$-module $M$ is defined by
$$ j^r_R(M) = \mathrm{min}\{i: \mathrm{Ext}^i_R(M,R_R) \neq 0\} , $$
or $j_R^r(M) = \infty$ if no such $i$ exists. We say that $R$ is \emph{grade symmetric} if $j_R^{\ell}(M) = j_R^{r}(M)$ for all central $R$-bimodules $M$ (where a \emph{central} bimodule satisfies $zm = mz$ for all $z \in Z(R)$ and $m \in M$). Let $\mathrm{Kdim}M$ denote the Krull dimension of the $R$-module $M$. The crux of $\S\S$4 and 5 is the proposal that the hypothesis that $R$ is \emph{grade symmetric $Z(R)$-Macaulay} is what is needed to yield a noncommutative version of the standard commutative chain of homological implications. First, in $\S$4, the key features of the grade symmetric $Z(R)$-Macaulay condition are described, including the interactions of the homological grade with the grade defined in terms of $Z(R)$-sequences (Lemmas \ref{grades} and \ref{muller}, and Theorem \ref{gradequiv}), and what is needed to ensure that
$$\delta := \mathrm{Kdim}R-j_R$$
defines a dimension function on the category of finitely generated $R$-modules (Corollary \ref{dim}). Recall moreover that a ring $R$ is \emph{Krull Macaulay} if $\mathrm{Kdim}R < \infty$ and, for all non-zero finitely generated right or left $R$-modules $M$,
$$ \mathrm{Kdim}R = \mathrm{Kdim}M + j_R(M). $$
 It is shown in Theorem \ref{opus} that, if $R$ is noetherian and a finitely generated module over $Z(R)$, then
$$ R \textit{ Krull-Macaulay} \Leftrightarrow R \; Z(R)-\textit{Macaulay, equicodimensional and grade symmetric}. $$

\subsection{Homological hierarchy} \label{intro4} The definitions of \emph{homological} and \emph{injective homogeneity} (for a ring $R$ which is a finite module over its centre) are recalled from \cite{BrH}, \cite{BrH2} in Definition \ref{injhom}; in essence, these conditions require that simple $R$-modules with the same central annihilator share certain properties in common. The relation of the hom.hom. property with the concept of a \emph{noncommutative crepant resolution} is recalled in Remarks \ref{injhomrems}(iii). In slightly abbreviated form, the main result of $\S$5 states:

\begin{theorem}\label{homo} Let $R$ be a ring which is a finite module over its noetherian centre $Z(R)$. Then the following are equivalent:
\begin{enumerate}
  \item[(i)] $R$ is injectively homogeneous [resp. homologically homogeneous];
  \item[(ii)] \begin{itemize}
  \item $R$ is $Z(R)$-Macaulay;
  \item $R$ is grade symmetric; and
  \item $R$ has finite injective dimension [resp. finite global dimension].
  \end{itemize}
\end{enumerate}
\end{theorem}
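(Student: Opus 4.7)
The plan is to handle the two implications separately, using the machinery of $\S$4 (especially Lemma \ref{muller}, Theorem \ref{gradequiv} and Theorem \ref{opus}) together with the foundational work of Brown and Hajarnavis in \cite{BrH, BrH2} on the two homogeneity conditions.

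For (i)$\Rightarrow$(ii), the $Z(R)$-Macaulay property and the finiteness of injective (resp.\ global) dimension are part of the standard dictionary for injectively (resp.\ homologically) homogeneous rings, so I would simply cite \cite{BrH, BrH2}. The genuinely new ingredient is grade symmetry. I would establish this by exploiting the fact --- also from \cite{BrH, BrH2} --- that an injectively homogeneous ring is Auslander--Gorenstein and a homologically homogeneous ring is Auslander-regular. Combined with the $Z(R)$-Macaulay property already in hand, Lemma \ref{muller} (or the equivalent reformulation in Theorem \ref{gradequiv}) then lets one compute the grade $j_R^r(M)$ of any non-zero finitely generated central $R$-bimodule $M$ as the $Z(R)$-grade of $\mathrm{ann}_{Z(R)}(M)$, a description which is manifestly unchanged by passing from right to left.

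For (ii)$\Rightarrow$(i), I would localise at an arbitrary maximal ideal $\mathfrak{m}$ of $Z(R)$. The localisation $R_{\mathfrak{m}} := R \otimes_{Z(R)} Z(R)_{\mathfrak{m}}$ inherits the three conditions of (ii) and, since $Z(R_{\mathfrak{m}}) = Z(R)_{\mathfrak{m}}$ is local, it is automatically equicodimensional. Theorem \ref{opus} then applies, so $R_{\mathfrak{m}}$ is Krull--Macaulay. For any simple right $R$-module $S$ with central annihilator $\mathfrak{m}$ --- equivalently, any simple right $R_{\mathfrak{m}}$-module --- Krull--Macaulay yields
$$ j_R^r(S) \;=\; \mathrm{Kdim}\,R_{\mathfrak{m}} - \mathrm{Kdim}\,S \;=\; \hgt(\mathfrak{m}), $$
and symmetrically on the left.

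The main obstacle is the remaining step: upgrading these grade equalities to the statement that the injective (resp.\ projective) dimension of $S$ also equals $\hgt(\mathfrak{m})$, which is the defining feature of the relevant homogeneity condition. Homological grade is in general only a lower bound for injective or projective dimension; to force equality I would use the finiteness of $\mathrm{inj.dim}\,R$ (resp.\ $\mathrm{gl.dim}\,R$) together with a Rees-type dévissage against a maximal $Z(R)_{\mathfrak{m}}$-sequence of length $\hgt(\mathfrak{m})$ (supplied by $Z(R)$-Macaulayness), reducing the question to whether the artinian quotient $R_{\mathfrak{m}}/(x_1,\ldots,x_d)R_{\mathfrak{m}}$ is semisimple in the homological case (or Gorenstein of injective dimension zero in the injective case). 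Grade symmetry enters this step to ensure the left- and right-hand Ext computations remain compatible throughout the dévissage, so that the dimension identities simultaneously hold for left and right simples over $R_{\mathfrak{m}}$.
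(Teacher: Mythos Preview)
Your outline is in the right spirit but contains two genuine gaps, one in each direction.

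\textbf{(i)$\Rightarrow$(ii), grade symmetry.} Your proposed route is circular. You invoke Lemma~\ref{muller} (or Theorem~\ref{gradequiv}) to compute $j_R(M)$ as a $Z(R)$-grade, but Lemma~\ref{muller}(ii) \emph{assumes} grade symmetry, and the relevant direction of Theorem~\ref{gradequiv} is (i)$\Rightarrow$(iii), which again takes grade symmetry as hypothesis. The Auslander--Gorenstein property you cite from \cite{BrH2} is not an input to either of those results, so it does no work in your argument as written. The paper instead uses \cite[Theorem~5.3]{BrH2} directly: for an injectively homogeneous ring, $j_R^{\ell}(R/P)=\mathrm{ht}(P)$ for every prime $P$, and since $R$ is already $Z(R)$-Macaulay, $\mathrm{ht}(P)=G_{Z(R)}(\mathfrak{p},R)$ by Theorem~\ref{CMprops}(ii). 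This verifies condition~(ii) of Theorem~\ref{gradequiv}, and now the implication (ii)$\Rightarrow$(i) there yields grade symmetry. No detour through Auslander--Gorenstein is needed.

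\textbf{(ii)$\Rightarrow$(i).} Your localisation and d\'evissage are exactly what the paper does, and you correctly isolate the crux: in the local case (with $Z(R)$ local, maximal ideal $\mathfrak{m}$, depth $t$, injective dimension $n$) one must show $n=t$, equivalently that $R/I$ is self-injective where $I=\sum x_iR$ for a maximal $Z(R)$-sequence. But you stop at identifying this as ``the main obstacle'' without closing it, and you slightly misstate the target: what is needed is equality of \emph{upper} grades $\mathrm{l.u.gr.}(M)$ across maximal ideals, not control of injective or projective dimensions of simples per se. The paper's argument is short but not obvious: grade symmetry via Theorem~\ref{gradequiv} gives $j_R^{\ell}(X)=t$ for \emph{every} simple $X$, so each simple embeds in $R/I$; by \cite[Lemma~3.1]{BrH2} some simple $Y$ has $\mathrm{Ext}_R^n(Y,R)\neq 0$, hence $\mathrm{Ext}_{R/I}^{n-t}(Y,R/I)\neq 0$; applying $\mathrm{Hom}_{R/I}(-,R/I)$ to $0\to Y\to R/I\to (R/I)/Y\to 0$ and using $\mathrm{inj.dim}(R/I)=n-t$ forces $\mathrm{Ext}_{R/I}^{n-t}(R/I,R/I)\neq 0$, whence $n=t$. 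This is the missing step in your plan. (Your remark that in the homological case one must show $R/I$ is semisimple is a correct reformulation, but the paper does not argue this way: it proves the injective case first and then invokes \cite[Theorem~6.5]{BrH} to pass from injectively homogeneous with finite global dimension to homologically homogeneous.)
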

Since grade symmetry is trivially satisfied by every commutative ring, it's clear that Theorem \ref{homo} recovers the hierarchy of $\S$\ref{intro1}(iii).

\subsection{Speculation beyond the PI case} \label{guess} Some suggestions about the best way to extend the definition of the Cohen-Macaulay property beyond the setting of rings which are module finite over their centres can be found in $\S$6.

\subsection{Notation} \label{notation} We shall frequently be working in a ring $R$ with a central subring $C$ over which $R$ is a finite module, the centre of $R$ denoted always by $Z(R)$. Given in this setting a prime ideal of $R$, denoted by a Roman capital, say $P$, we'll use the corresponding small fraktur letter, in this instance $\mathfrak{p}$, to denote the prime ideal $P \cap C$ of $C$. Given a nonzero $R$-module $M$, the \emph{support} of $M$ is
$$ \mathrm{supp}(M) := \{\mathfrak{p} \in \mathrm{Spec}(Z(R)): M_{\mathfrak{p}} \neq 0 \}. $$
Thus, if $M$ is a finitely generated $R$-module, then $\mathrm{supp}(M) = \{\mathfrak{p} \in \mathrm{Spec}(Z(R)): \mathfrak{p} \supseteq \mathrm{Ann}_{Z(R)}(M).$

The \emph{codimension} or \emph{height} of a prime ideal $P$ of a ring $R$, denoted $\mathrm{ht}_R(P)$, or simply $\mathrm{ht}(P)$, is the greatest length of a chain of prime ideals descending from $P$; following \cite{E}, our main reference for commutative algebra, we will usually use the former terminology.

The symbols $\mathrm{Kdim}$, $\mathrm{gl.dim}$, $\mathrm{inj.dim}$, $\mathrm{pr.dim}$ are used to denote, respectively, the Krull, global, injective and projective dimensions of a module or of a ring. Details of these and other basic ring-theoretic notions can be found in \cite{MR}, \cite{GW} and \cite{R}, for example.

\subsection*{Acknowledgments}
Early versions of some of the results in this paper formed part of
the second-named author's 2010 PhD thesis at the University of Glasgow,
funded by the Carnegie Trust for the Universities of Scotland. The first author thanks Michael Wemyss and James Zhang for helpful conversations.

\section{Centrally Macaulay rings}
\label{CenMac}

\subsection{Definition and examples} \label{CMdefn}

\begin{definition} \label{centMac} Let $C$ be a commutative noetherian ring, $\mathfrak{i}$ an ideal of $C$ and $X$ a $C$-module.

{\rm(i)} The $\mathfrak{i}$-\textit{grade} of $X$, denoted $G_C (\mathfrak{i},X)$, is the length of a maximal
$C$-sequence in $\mathfrak{i}$ on $X$. When the ring $C$ in question is clear, we write simply $G(\mathfrak{i},X)$.

{\rm(ii)} $X$ is a \textit{Cohen-Macaulay} $C$-module if $G_C(\mathfrak{m},X) = \mathrm{Kdim}_{C_{\mathfrak{m}}}(X_{\mathfrak{m}})$ for all maximal ideals $\mathfrak{m}$ of $C$.

{\rm(iii)} A Cohen-Macaulay module $X$ is \emph{maximal} if it is Cohen-Macaulay and $G(\mathfrak{m},X) =
\mathrm{Kdim}(C_{\mathfrak{m}})$ for all maximal ideals $\mathfrak{m}$ of $C$.

{\rm(iv)} \cite{BHM} A noetherian ring $R$ which is a finite module over a central subring $C$
is $C$-\textit{Macaulay}, or \textit{centrally Macaulay with respect to} $C$, if $R$ is a maximal Cohen-Macaulay
$C$-module.
\end{definition}

\begin{remarks}\label{CMrems} {\rm( i)} The last part of the above definition makes sense because $R$ is noetherian if and only if $C$ is, by \cite{FJ}, \cite[Cor 10.1.11(ii)]{MR}.

{\rm(ii)} Definition \ref{CMdefn}(iv) is equivalent to the condition that $G_C (\mathfrak{m}, R) = \mathrm{Kdim}(R_{\mathfrak{m}})$ for all maximal ideals $\mathfrak{m}$ of $C$, since $\mathrm{Kdim}_{C_{\mathfrak{m}}}(R_{\mathfrak{m}}) = \mathrm{Kdim}_{R_{\mathfrak{m}}}( R_{\mathfrak{m}}).$
\end{remarks}

\begin{examples} \label{CMex} {\rm(i)} Every commutative Cohen-Macaulay ring $R$ is $R$-Macaulay.

{\rm(ii)} Every finite dimensional algebra $R$ over the field $k$ is $k$-Macaulay: in Definition \ref{centMac}(iii), $\mathfrak{m} = 0$ and both integers under consideration are 0.

{\rm(iii)} Every ring $R$ which is a finitely generated torsion-free module over a central integral domain of Krull dimension 1 is $Z(R)$-Macaulay: both integers in Definition \ref{centMac}(iii) are equal to 1.

{\rm(iv)} If $R$ is a free module of finite rank over a central polynomial subalgebra, then $R$ is $Z(R)$-Macaulay; see Theorem \ref{free}(i) for details. Hence all of the following algebras $R$ are $Z(R)$-Macaulay:  $R$ a quantised enveloping algebra or a quantised function algebra with parameter a (non-trivial) root of unity; $R$ the enveloping algebra of a finite dimensional Lie algebra over a field of positive characteristic; $R$ a symplectic reflection algebra with parameter $t = 0$. All these cases follow from properties of $Z(R)$ along with the appropriate PBW-type theorem; details can be extracted from \cite{BGhom}, \cite[Part III]{BG}, \cite{EG}.

{\rm(v)} Let $R$ be an affine Hopf $k$-algebra which is a finitely generated module over its centre. Then $R$ is $Z(R)$-Macaulay. This follows from \cite[Theorem 0.2]{WZ}, coupled with Theorem \ref{opus} below.

{\rm(vi)} \cite[Example 7.3]{BHM} The centre $Z(R)$ of a scalar local domain $R$ which is $Z(R)$-Macaulay need not itself be Cohen-Macaulay. For example, let $k$ be the field of two elements, Let $S = k[X,Y,Z,T]$ and let $\sigma$ be the $k$-algebra automorphism of $S$ with $\sigma(X)=X+Y,\sigma(Y)=Y+Z,\sigma(Z)=Z+T,\sigma(T)= T$; this is chosen to have the property that the fixed algebra $S^{\sigma}$ is \emph{not} Cohen-Macaulay, \cite{Be}, \cite[Ex.16.8]{F}. Let $U$ be the skew polynomial algebra $S[W;\sigma]$, so that $Z(U) = S^{\sigma}[W^4].$ It is now not difficult to show that the maximal ideal $M := \langle X,Y,Z,T,W \rangle$ of $U$ is localisable. Set $R  := U_{M}$. Since $U$ and hence $R$ are homologically homogeneous (see Definition \ref{injhom}(iii)), $R$ is $Z(R)$-Macaulay (by Theorem \ref{hom}). But $Z(R) = (S^{\sigma}[W^4])_{M \cap Z(U)}$ is not Cohen-Macaulay.
\end{examples}

\subsection{Prime ideals}\label{prime} We recall the fundamental relations between the prime ideals of a ring and a central subring, which we'll use frequently in the sequel. For proofs, see for example \cite[1.2, 1.3, 1.7]{Bl}.

\begin{proposition} \label{INCetc} Let $R$ be a ring with a central subring $C$ over which it is a finitely generated module. Let $t$ be the minimal number of generators of $R$ as a $C$-module.

{\rm(i)} \emph{Lying over.} Let $\mathfrak{p}$ be a prime ideal of $C$. Then there exists an integer $s$ with $1 \leq s \leq t$ such that there are exactly $s$ prime ideals $P_1, \ldots , P_s$ of $R$ with $P_i \cap C = \mathfrak{p}$ for each $i$.

{\rm(ii)} \emph{Incomparability.} Let $P$ and $I$ be ideals of $R$ with $P \subsetneq I$ and $P$ prime. Then $P\cap C \subsetneq I \cap C.$

{\rm(iii)} \emph{Going up.} Let $P$ be a prime ideal of $R$, let $\mathfrak{p} := P \cap C$, and let $\mathfrak{q}$ be a prime ideal of $C$ with $\mathfrak{p} \subsetneq \mathfrak{q}.$ Then there exists a prime ideal $Q$ of $R$ with $P \subset Q$ and $Q \cap C = \mathfrak{q}.$
\end{proposition}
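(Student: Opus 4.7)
The plan is to handle (i) and (iii) by localization and passage to quotients, with (ii) the analytic heart, resting on the determinant trick (integral dependence of every element of $R$ over $C$).

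For (i), I would localize at $\mathfrak{p}$: since elements of $C \setminus \mathfrak{p}$ are central and regular, ordinary commutative localization of $R$ applies and no Ore conditions are needed. Primes $P$ of $R$ contracting to $\mathfrak{p}$ correspond bijectively with primes of the quotient $\overline{R} := R_\mathfrak{p}/\mathfrak{p}R_\mathfrak{p}$, which is a $k(\mathfrak{p})$-algebra of dimension at most $t$ because $R$ needs only $t$ generators as a $C$-module. Being finite-dimensional, $\overline{R}$ is Artinian, its prime ideals coincide with its maximal ideals, and the number of the latter is bounded by $\dim_{k(\mathfrak{p})} \overline{R} \leq t$; the lower bound $s \geq 1$ follows from the Nakayama-lemma nonvanishing of $\overline{R}$.

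For (ii) I would reduce to $P = 0$ by passing to $R/P$ and $C/(P \cap C)$, so that it suffices to show: in a prime ring $R$ finite over a central subring $C$, every nonzero ideal $I$ satisfies $I \cap C \neq 0$. Choose $0 \neq r \in I$; since left multiplication by $r$ is a $C$-linear endomorphism of the finitely generated $C$-module $R$, the determinant trick gives a relation $r^n + c_{n-1} r^{n-1} + \cdots + c_0 = 0$ with all $c_i \in C$, and we take $n$ minimal. If $c_0 = 0$, then $r \bigl(r^{n-1} + c_{n-1} r^{n-2} + \cdots + c_1\bigr) = 0$, whence primeness together with $r \neq 0$ forces the right factor to vanish, contradicting minimality. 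Hence $0 \neq c_0 = -r\bigl(r^{n-1} + \cdots + c_1\bigr) \in Rr \cap C \subseteq I \cap C$.

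Part (iii) then falls out by combining the first two: replacing $R, C, P, \mathfrak{p}, \mathfrak{q}$ by $R/P, C/\mathfrak{p}, 0, 0, \mathfrak{q}/\mathfrak{p}$, part (i) produces a prime $\overline{Q}$ of $R/P$ lying over $\mathfrak{q}/\mathfrak{p}$; pulling back gives $Q \supseteq P$ in $R$ with $Q \cap C = \mathfrak{q}$, and strict containment is forced by $\mathfrak{q} \neq \mathfrak{p}$. The main delicate point throughout is the determinant-trick step in (ii): one must produce a monic polynomial with \emph{central} coefficients annihilating $r$ and then use primeness to extract a nonzero central element from the ideal $Rr$. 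Once this is in hand, the rest is formal manipulation.
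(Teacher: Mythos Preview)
Your arguments for (i) and (iii) are fine, and the paper itself gives no proof at all, merely citing \cite{Bl}. However, your proof of (ii) contains a genuine gap at exactly the point you flag as delicate.

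You reduce to $R$ prime, take $0 \neq r \in I$, obtain a minimal monic relation $r^n + c_{n-1}r^{n-1} + \cdots + c_0 = 0$ with $c_i \in C$, and then claim that if $c_0 = 0$ the factorisation $r\bigl(r^{n-1} + \cdots + c_1\bigr) = 0$ forces the second factor to vanish ``by primeness''. This is false: a prime ring need not be a domain. Take $R = M_2(k)$, which is prime (indeed simple), and $r = e_{12}$. The minimal monic relation is $r^2 = 0$, so $c_0 = c_1 = 0$, and $r \cdot r = 0$ with both factors nonzero. Primeness only gives $aRb = 0 \Rightarrow a = 0$ or $b = 0$; it says nothing about a single product $ab$, even when $a$ and $b$ commute.

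The standard repair is to avoid the cancellation step entirely. Since $R$ is prime, nonzero elements of $C$ are regular in $R$ (if $cr = 0$ with $c \in C \setminus \{0\}$, then $cRr = Rcr = 0$, forcing $r = 0$), so $C$ is a domain and $R$ embeds in $Q := R \otimes_C \Fract(C)$. This $Q$ is a prime ring which is finite-dimensional over a field, hence simple Artinian. The extension $IQ$ is a nonzero ideal of $Q$, so $IQ = Q$; writing $1$ as a finite $\Fract(C)$-combination of elements of $I$ and clearing denominators produces $0 \neq c \in I \cap C$. This replaces your minimality/cancellation manoeuvre and is essentially how the cited reference proceeds.
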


As an immediate consequence of (ii) and (iii) of the proposition, we get (i) and (ii) of the following result. Part (iii) is a standard easy exercise.

\begin{corollary}\label{heights} With $R$ and $C$ as in the proposition, let $P$ be a prime ideal of $R$, and let $\mathfrak{p} := P\cap C.$

{\rm(i)} $\mathrm{ht}(\mathfrak{p}) \geq \mathrm{ht}(P).$

{\rm(ii)} There exists a prime ideal $Q$ of $R$, with $Q \cap C = \mathfrak{p}$, such that
$$ \mathrm{ht}(\mathfrak{p}) \leq \mathrm{ht}(Q).$$

{\rm(iii)} Let $\mathcal{C}$ be a multiplicatively closed subset of $C \setminus  \mathfrak{p}$, with $1 \in \mathcal{C}$. Then there is a $1-1$ correspondence between saturated chains of primes descending from $P$ in $R$, and saturated chains of primes descending from $P_{\mathcal{C}}$ in $R_{\mathcal{C}}$. In particular, $\mathrm{ht}_{R}(P) = \mathrm{ht}_{R_{\mathcal{C}}}(P_{\mathcal{C}}).$
\end{corollary}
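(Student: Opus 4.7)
The three parts split naturally and, as the text already signals, parts (i) and (ii) are immediate from Proposition~\ref{INCetc} while (iii) is the standard central (Ore) localisation correspondence. My plan would be to write each out in turn, being explicit about where strict containment is used, since that is the only place one could slip up.

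For (i), the plan is to start with a saturated chain of primes $P = P_0 \supsetneq P_1 \supsetneq \cdots \supsetneq P_n$ of $R$ realising $n = \mathrm{ht}(P)$, and contract it to $C$. Applying Incomparability (Proposition~\ref{INCetc}(ii)) to each pair $P_{i+1} \subsetneq P_i$ yields $P_{i+1}\cap C \subsetneq P_i \cap C$, so the contracted chain $\mathfrak{p} = \mathfrak{p}_0 \supsetneq \mathfrak{p}_1 \supsetneq \cdots \supsetneq \mathfrak{p}_n$ has length $n$, giving $\mathrm{ht}(\mathfrak{p}) \geq \mathrm{ht}(P)$.

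For (ii), I would run the argument in reverse and lift. Choose a chain $\mathfrak{p} = \mathfrak{p}_0 \supsetneq \mathfrak{p}_1 \supsetneq \cdots \supsetneq \mathfrak{p}_m$ in $C$ of length $m = \mathrm{ht}(\mathfrak{p})$. By Lying Over (Proposition~\ref{INCetc}(i)), pick any prime $Q_m$ of $R$ with $Q_m \cap C = \mathfrak{p}_m$. Then iterated Going Up (Proposition~\ref{INCetc}(iii)) produces primes $Q_m \subsetneq Q_{m-1} \subsetneq \cdots \subsetneq Q_0$ with $Q_i \cap C = \mathfrak{p}_i$; Incomparability guarantees the inclusions are strict (since the intersections with $C$ are). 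Setting $Q := Q_0$ gives $Q \cap C = \mathfrak{p}$ and $\mathrm{ht}(Q) \geq m = \mathrm{ht}(\mathfrak{p})$.

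For (iii), the plan is to invoke the standard bijection between primes of $R_\mathcal{C}$ and primes of $R$ disjoint from $\mathcal{C}$ (valid because $\mathcal{C}$, being central, is trivially an Ore set), given by extension $P' \mapsto P'_\mathcal{C}$ and contraction $\widetilde{P} \mapsto \widetilde{P} \cap R$. Any prime $P' \subseteq P$ satisfies $P' \cap C \subseteq \mathfrak{p}$, so $P' \cap \mathcal{C} \subseteq \mathfrak{p} \cap \mathcal{C} = \emptyset$, and hence every prime of $R$ sitting below $P$ participates in the correspondence. Since the bijection is order-preserving and preserves strict containment, it restricts to a bijection between saturated chains descending from $P$ in $R$ and those descending from $P_\mathcal{C}$ in $R_\mathcal{C}$, and in particular $\mathrm{ht}_R(P) = \mathrm{ht}_{R_\mathcal{C}}(P_\mathcal{C})$. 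The only real point to verify carefully is that saturation — the nonexistence of an intermediate prime between consecutive terms — is indeed transported across the bijection; this follows because any putative intermediate prime on either side corresponds, under contraction or extension, to an intermediate prime on the other. No substantive obstacle arises.
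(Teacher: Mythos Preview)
Your proposal is correct and matches the paper's own (very brief) treatment: the paper simply states that (i) and (ii) are immediate consequences of Incomparability and Going Up from Proposition~\ref{INCetc}, and that (iii) is a standard easy exercise. Your explicit arguments for each part are exactly the intended ones, with no gaps.
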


\begin{example} \label{baddy} If no further hypotheses are imposed, (i) and (ii) of the corollary constitute the best one can say. Thus, let $R = k[X] \oplus k$, where $k$ is a field, and let $C$ be the subalgebra $k[(X,1)] = \{(f,f(1)) : f \in k[X] \}$ of $R$. Take $P = (k[X],0),$ a minimal prime of $R$. Then $P \cap C = ((X-1)k[X],0)$ a maximal ideal of the polynomial subalgebra $C$.
\end{example}

\subsection{Basic properties of centrally Macaulay rings} \label{basic} The following is essentially assembled from \cite{BHM} and \cite{BrH}. Note that (ii) repairs the difficulty highlighted in Example \ref{baddy}.

\begin{theorem} \label{CMprops} Let $R$ be a noetherian ring which is a finite module over the central subring $C$.

{\rm(i)} For all primes $P$ of $R$,
 \begin{equation} \label{firstoff} G_C(\mathfrak{p}, R) \leq  G_{C_{\mathfrak{p}}}(\mathfrak{p}_{\mathfrak{p}}, R_{\mathfrak{p}}) \leq \mathrm{ht}(P_{\mathfrak{p}}) \leq \mathrm{ht}(P).\end{equation}

{\rm(ii)} If $R$ is $C$-Macaulay, then for all prime ideals $P$ of $R$, writing $\mathfrak{p} := P \cap C$,
$$ \mathrm{ht}(P) = G_C(\mathfrak{p},R) = \mathrm{ht}(\mathfrak{p}).$$

{\rm(iii)} If $R$ is $C$-Macaulay and $\mathfrak{p}$ is a prime ideal of $C$, then $R_{\mathfrak{p}}$ is $C_{\mathfrak{p}}$-Macaulay.

{\rm(iv)} $R$ is $C$-Macaulay if and only if $R_{\mathfrak{m}}$ is $C_{\mathfrak{m}}$-Macaulay for all maximal ideals $\mathfrak{m}$ of $C$.

{\rm(v)} $R$ is $C$-Macaulay if and only if $G_C(\mathfrak{m},R) = \mathrm{ht}(M)$ for all maximal ideals $M$ of $R$, where $\mathfrak{m} := M \cap C$.

{\rm(vi)} If $R$ is $C$-Macaulay, then $R$ has an artinian quotient ring.

{\rm(vii)} Suppose that $R$ is $C$-Macaulay, with $\{x_1, \ldots, x_t\}$ a $C$-sequence on $R$. Write $\overline{R} := R/\sum_i x_iR$ and $\overline{C} := C + \sum_i x_iR/\sum_i x_iR$. Then $\overline{R}$ is $\overline{C}$-Macaulay.

\end{theorem}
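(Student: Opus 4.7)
Parts (i)--(iv) will reduce to standard commutative algebra applied to $R$ viewed as a finitely generated faithful $C$-module, after which (v)--(vii) follow quickly from (ii).

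For (i), I would treat the three inequalities separately. The first is routine: a $C$-sequence on $R$ contained in $\mathfrak{p}$ stays regular after inverting $C\setminus\mathfrak{p}$, since central non-zero-divisors remain non-zero-divisors under localization. For the second, given an $R_{\mathfrak{p}}$-regular central sequence $x_1,\dots,x_n$ in $\mathfrak{p}_{\mathfrak{p}}$, an induction on $n$ (each $x_i$ is a non-zero-divisor modulo the preceding ones and hence lies in no minimal prime of $R_{\mathfrak{p}}/(x_1,\dots,x_{i-1})R_{\mathfrak{p}}$, so factoring it out drops height by exactly one) produces a descending chain of length $n$ of primes contained in $P_{\mathfrak{p}}$; the third inequality is in fact an equality by Corollary \ref{heights}(iii).

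The heart of the theorem is (ii). The upper chain $G_C(\mathfrak{p},R)\leq \mathrm{ht}(P)\leq \mathrm{ht}(\mathfrak{p})$ is immediate from (i) and Corollary \ref{heights}(i), so the main obstacle is the reverse bound $\mathrm{ht}(\mathfrak{p})\leq G_C(\mathfrak{p},R)$. My approach is to first establish, by a prime-avoidance lifting argument (valid because $R$ is a faithful finitely generated $C$-module), that grade localizes, i.e.\ $G_C(\mathfrak{p},R)=G_{C_{\mathfrak{p}}}(\mathfrak{p}_{\mathfrak{p}},R_{\mathfrak{p}})$. Then I choose a maximal ideal $\mathfrak{m}$ of $C$ containing $\mathfrak{p}$ and localize at $\mathfrak{m}$: the Macaulay hypothesis gives $G_{C_{\mathfrak{m}}}(\mathfrak{m}_{\mathfrak{m}},R_{\mathfrak{m}})=\mathrm{Kdim}(C_{\mathfrak{m}})$, and the classical result that a maximal Cohen-Macaulay module over a commutative local noetherian ring is Cohen-Macaulay at every prime in its support yields $G_{C_{\mathfrak{m}}}(\mathfrak{p}_{\mathfrak{m}},R_{\mathfrak{m}})=\mathrm{ht}(\mathfrak{p}_{\mathfrak{m}})=\mathrm{ht}(\mathfrak{p})$. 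Combining these two equalities gives the bound.

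The remaining parts then cascade. For (iii), part (ii) applied at the unique maximal ideal $\mathfrak{p}_{\mathfrak{p}}$ of $C_{\mathfrak{p}}$ yields $G_{C_{\mathfrak{p}}}(\mathfrak{p}_{\mathfrak{p}},R_{\mathfrak{p}})=\mathrm{Kdim}(C_{\mathfrak{p}})$; (iv) is (iii) restricted to maximal ideals, combined with the definition. For (v), the forward direction uses (ii) together with the observation that $M\cap C$ is maximal in $C$ whenever $M$ is maximal in $R$ (since $C/(M\cap C)$ embeds in the centre of the simple artinian ring $R/M$ and hence is a field); the converse uses Lying Over (Proposition \ref{INCetc}(i)) and Corollary \ref{heights}(ii) to recover the Macaulay equality at every maximal $\mathfrak{m}$ of $C$. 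For (vi), I would verify Small's criterion: (ii) shows that minimal primes of $R$ correspond to minimal primes of $C$ (both having height $0$), and the Macaulay hypothesis precludes embedded primes on $R$, so $R$ has an artinian classical ring of quotients. Finally, (vii) is a straightforward induction on $t$: each regular element factored out reduces both $G_C(\mathfrak{m},\overline{R})$ and $\mathrm{Kdim}(\overline{C}_{\overline{\mathfrak{m}}})$ by one at every maximal $\mathfrak{m}$ containing it, preserving the Macaulay equality.
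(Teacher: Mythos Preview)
Your outline is correct and largely parallels the paper's proof. The one genuine difference is in (ii): the paper simply cites \cite[Theorem~4.12]{BHM}, whereas you supply a self-contained commutative-algebra argument (grade localizes for finitely generated modules, and a maximal Cohen--Macaulay module over a local ring remains maximal Cohen--Macaulay after localization at any prime in its support). Your route is more transparent and avoids the external reference; it works because $R$ is faithful as a $C$-module, so $\mathrm{Supp}_C(R)=\mathrm{Spec}(C)$ and hence $\dim_C(R_{\mathfrak{q}})=\dim(C_{\mathfrak{q}})$ at every prime $\mathfrak{q}$.

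Two small points of phrasing. In (iii) you write ``part (ii) applied at $\mathfrak{p}_{\mathfrak{p}}$'', but what you are really using is the equality $G_{C_{\mathfrak{p}}}(\mathfrak{p}_{\mathfrak{p}},R_{\mathfrak{p}})=\mathrm{ht}(\mathfrak{p})$ already obtained \emph{inside} your proof of (ii); applying the statement of (ii) to $R_{\mathfrak{p}}$ would be circular. In (vii), your claim that $\mathrm{Kdim}(\overline{C}_{\overline{\mathfrak{m}}})$ drops by exactly one when a regular central element is factored out is not automatic and needs the Principal Ideal Theorem (primes minimal over $xR$ have height exactly one since $x$ is a non-zero-divisor); the paper makes this step explicit by invoking the Generalised Principal Ideal Theorem to bound $\mathrm{ht}(\overline{M})$ from above.
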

\begin{proof} (i) It is routine to check \cite[Lemma 18.1]{E} that $C$-sequences are preserved under localisation, so that
\begin{equation} \label{local1} G_C(\mathfrak{p}, R)\leq G_{C_{\mathfrak{p}}}(\mathfrak{p}_{\mathfrak{p}}, R_{\mathfrak{p}}). \end{equation}
An easy argument (independent of the Macaulay property) shows \cite[page 75]{BHM} that
\begin{equation} \label{gulp} G_{C_{\mathfrak{p}}}(\mathfrak{p}_{\mathfrak{p}}, R_{\mathfrak{p}}) \leq \mathrm{ht}(P_{\mathfrak{p}}). \end{equation}
Now (i) follows from (\ref{local1}), (\ref{gulp}) and Corollary \ref{heights}(iii).

(ii) This is \cite[Theorem 4.12]{BHM}.

(iii) Suppose that $R$ is $C$-Macaulay and let $\mathfrak{p}$ be a prime of $C$. Then (ii) shows that, for any prime $P$ of $R$ with $P \cap C = \mathfrak{p}$, equality holds throughout (\ref{firstoff}). Since $\mathrm{Kdim}(R_{\mathfrak{p}})$ is the common codimension of these primes $P_{\mathfrak{p}}$, (iii) follows in the light of Remark \ref{CMrems}(ii).

(iv) The implication from left to right is a special case of (iii). Conversely, suppose that $R_{\mathfrak{m}}$ is $C_{\mathfrak{m}}$-Macaulay for all maximal ideals $\mathfrak{m}$ of $C$, and let $\mathfrak{m}$ be a maximal ideal of $C$.  By \cite[Lemma 18.1]{E} and our hypothesis,
$$ G_C(\mathfrak{m}, R) = G_{C_{\mathfrak{m}}}(\mathfrak{m}_{\mathfrak{m}}, R_{\mathfrak{m}}) = \mathrm{Kdim}_{C_{\mathfrak{m}}}(R_{\mathfrak{m}}), $$
as required.

(v) Suppose first that $R$ is $C$-Macaulay and let $M$ be a maximal ideal of $R$. Then $G_C(\mathfrak{m},R) = \mathrm{Kdim}(R_{\mathfrak{m}})$, and so $G_C(\mathfrak{m},R) \geq \mathrm{ht}(M_{\mathfrak{m}}) = \mathrm{ht}(M)$. The reverse inequality is (\ref{local1}) and (\ref{gulp}). For the opposite implication, fix a maximal ideal $\mathfrak{m}$ of $C$. That $G_C(\mathfrak{m}, R) = \mathrm{Kdim}_{C_{\mathfrak{m}}}(R_{\mathfrak{m}})$ follows at once from the right hand hypothesis and the fact that $$\mathrm{Kdim}_{C_{\mathfrak{m}}}(R_{\mathfrak{m}}) = \mathrm{Kdim}_{R_{\mathfrak{m}}}(R_{\mathfrak{m}}) = \mathrm{max}\{\mathrm{ht}(M): M\vartriangleleft R, M \textit{ maximal, } M \cap C = \mathfrak{m} \}.$$

(vi) Suppose that $R$ is $C$-Macaulay. By (ii), every annihilator prime of $R$ is minimal. Hence, if $P_1, \ldots , P_t$ are the minimal prime ideals of $R$, then (again using (ii)), $\mathfrak{p}_1, \ldots , \mathfrak{p}_t$ are (not necessarily distinct) minimal primes of $C$, and $\mathcal{C} := C \setminus \cup_{i=1}^t \mathfrak{p}_i$ is a non-empty multiplicatively closed subset of $R$ consisting of central regular elements. Therefore $R_{\mathcal{C}}$ is a partial quotient ring of $R$; but it is a finite module over $C_{\mathcal{C}}$, and all its (finitely many) prime ideals $\{P_{i}R_{\mathcal{C}} : i = 1, \ldots , t \}$ are maximal. Since $R_{\mathcal{C}}$ is noetherian, it follows easily that it is artinian, as claimed.

(vii) Let $R$, $C$ and $\{x_1, \ldots , x_t \}$ be as  stated. Without loss of generality, $t = 1$, so write $x = x_1$, and it will be enough to show that $\overline{R} := R/xR$ is $\overline{C}$-Macaulay, where $\overline{C} := (C + xR)/xR.$ By (iv), we may assume that $C$ is local, with maximal ideal $\mathfrak{m}.$ Let $\{x, y_2, \ldots , y_n \}$ be a maximal $C$-sequence in $R$, and let $M$ be a maximal ideal of $R$ with $M \cap C = \mathfrak{m}$. Thus $\overline{M}$ is a maximal ideal of $\overline{R}$, and $\{\overline{y}_2, \ldots , \overline{y}_n \}$ is a maximal $\overline{C}$-sequence in $\overline{R}$. With the inequality following as in (\ref{gulp}),
\begin{equation} \label{forth} G_{\overline{C}}(\overline{\mathfrak{m}}, \overline{R}) = n-1 \leq \mathrm{ht}(\overline{M}). \end{equation}
From (ii) we see that $M$ is a minimal prime over $xR +\sum_{i=2}^n y_i R$. Hence $\overline{M}$ is minimal over $\sum_{i=2}^n \overline{y}_i \overline{R}$. By the Generalised Principal Ideal Theorem, \cite[Theorem 4.1.13]{MR},
\begin{equation} \label{back} \mathrm{ht}(\overline{M}) \leq n - 1. \end{equation}
By (\ref{forth}) and (\ref{back}), $G_{\overline{C}}(\overline{\mathfrak{m}}, \overline{R}) = \mathrm{ht}(\overline{M})$. Hence the result follows from (v).
\end{proof}

\subsection{Equicodimensionality and Krull homogeneity.} \label{equidim}  Let the noetherian ring $R$ be a finite module over its centre. Recall that we say that $R$ is \emph{equicodimensional} if every maximal ideal $M$ of $R$ has the same codimension; this common codimension is then necessarily $\mathrm{Kdim}(R)$. We'll see that equicodimensionality is key to the validity of certain desirable properties of a centrally Macaulay ring.

\begin{lemma} \label{equi} Suppose that the noetherian ring $R$ is a finite module over a central subring $C$.

{\rm(i)} If $R$ is equicodimensional, then $C$ is equicodimensional. In this case every maximal ideal of $R$ and every maximal ideal of $C$ has codimension equal to $\mathrm{Kdim}(R)$.

{\rm(ii)} The converse to the first sentence of (i) is true if $R$ is $C$-Macaulay.
\end{lemma}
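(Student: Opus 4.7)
For part (i), the plan is to pass between maximal ideals of $R$ and maximal ideals of $C$ using the two estimates in Corollary \ref{heights}. Fix a maximal ideal $\mathfrak{m}$ of $C$, and put $d:=\mathrm{Kdim}(R)$, which by hypothesis is the common codimension of every maximal ideal of $R$. Lying over (Proposition \ref{INCetc}(i)) produces some prime $P$ of $R$ with $P\cap C=\mathfrak{m}$; since $R$ is integral over $C$ and $\mathfrak{m}$ is maximal, $P$ is maximal. Corollary \ref{heights}(i) then gives $\mathrm{ht}(\mathfrak{m})\ge\mathrm{ht}(P)=d$. Conversely, Corollary \ref{heights}(ii) provides a prime $Q$ of $R$ with $Q\cap C=\mathfrak{m}$ and $\mathrm{ht}(\mathfrak{m})\le\mathrm{ht}(Q)$; again $Q$ is maximal, so $\mathrm{ht}(Q)=d$ and hence $\mathrm{ht}(\mathfrak{m})\le d$. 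Combining, $\mathrm{ht}(\mathfrak{m})=d$ for every maximal ideal $\mathfrak{m}$ of $C$, proving $C$ is equicodimensional and both assertions about the common value $d$.

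For part (ii), the work has essentially been done already by Theorem \ref{CMprops}(ii). Let $d':=\mathrm{Kdim}(C)=\mathrm{Kdim}(R)$ be the common codimension of the maximal ideals of $C$. Given any maximal ideal $M$ of $R$, its contraction $\mathfrak{m}:=M\cap C$ is maximal in $C$ (by the same integrality argument as above), so $\mathrm{ht}(\mathfrak{m})=d'$. Theorem \ref{CMprops}(ii), applied under the $C$-Macaulay hypothesis, then yields $\mathrm{ht}(M)=\mathrm{ht}(\mathfrak{m})=d'$. Since $M$ was arbitrary, $R$ is equicodimensional.

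In neither part is there a real obstacle: both arguments amount to a careful bookkeeping with the lying over/incomparability/going up package of Proposition \ref{INCetc}. The only mildly delicate point is in (i), where one has to invoke \emph{both} inequalities of Corollary \ref{heights} (one to bound $\mathrm{ht}(\mathfrak{m})$ from below via a prime of $R$ known to lie over $\mathfrak{m}$, the other to bound $\mathrm{ht}(\mathfrak{m})$ from above via a possibly different prime), together with the observation that integrality forces every prime of $R$ lying over $\mathfrak{m}$ to be maximal, so that the equicodimensionality hypothesis applies to both witnesses. Part (ii) is by contrast a direct consequence of the identification $\mathrm{ht}(M)=\mathrm{ht}(M\cap C)$ furnished by the Macaulay assumption.
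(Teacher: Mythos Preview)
Your proof is correct and follows essentially the same approach as the paper. The only cosmetic difference is that in part (i) the paper applies both inequalities of Corollary \ref{heights} to a single maximal ideal $M$ of $R$ lying over $\mathfrak{m}$ (obtaining $\mathrm{ht}(M)=\mathrm{ht}(\mathfrak{m})$ directly), whereas you use two possibly different primes $P$ and $Q$; part (ii) is identical to the paper's argument via Theorem \ref{CMprops}(ii).
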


\begin{proof} (i) Suppose that $R$ is equicodimensional, and let $\mathfrak{m}$ be a maximal ideal of $C$. By Corollary \ref{heights}(ii) there is a maximal ideal $M$ of $R$ with $M \cap C = \mathfrak{m}$ such that $\mathrm{ht}(M) \geq \mathrm{ht}(\mathfrak{m})$. But, by Corollary \ref{heights}(i), $\mathrm{ht}(M) \leq \mathrm{ht}(\mathfrak{m})$ also holds. Hence $\mathrm{ht}(\mathfrak{m}) = \mathrm{Kdim}(R)$ by our hypothesis on $R$, and so $C$ is equicodimensional.

(ii) Suppose that $R$ is $C$-Macaulay and $C$ is equicodimensional. Then $R$ is equicodimensional by Theorem \ref{CMprops}(ii).
\end{proof}

The example in Remark \ref{blah}(ii) below shows that Lemma \ref{equi}(ii) fails in the absence of the $C$-Macaulay hypothesis.

A noetherian ring $R$ is \emph{Krull homogeneous} if $\mathrm{Kdim}(I) = \mathrm{Kdim}(R)$ for every non-zero right or left ideal $I$ of $R$. Note that, if $R$ is a finite module over its centre, then there is no need to deal separately with right and left ideals when handling this concept: ideals of $R$ have the same right and left Krull dimensions by \cite[Corollary 6.4.13]{MR}, and if $R$ has a non-zero right ideal $I$ with $\mathrm{Kdim}(I) < \mathrm{Kdim}(R)$, then it is easy to see that $\mathrm{Kdim}(RI) < \mathrm{Kdim}(R)$.

\begin{proposition}\label{affequi} Let $R$ be a finite module over $Z(R)$, and suppose that $R$ is an affine $k$-algebra, where $k$ is a field. If $R$ is Krull homogeneous, then it is equicodimensional.
\end{proposition}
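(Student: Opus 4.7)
The plan is to push Krull homogeneity from $R$ down to its centre $Z := Z(R)$, deduce that $Z$ is equicodimensional, and then lift the conclusion back to $R$. By the Artin--Tate Lemma already invoked in Remark \ref{CMrems}(i), $Z$ is itself an affine commutative $k$-algebra. To transfer Krull homogeneity to $Z$, fix a nonzero ideal $\mathfrak{i}$ of $Z$. Then $\mathfrak{i}R$ is a nonzero two-sided ideal of $R$, so $\mathrm{Kdim}(\mathfrak{i}R) = \mathrm{Kdim}(R) = \mathrm{Kdim}(Z)$ by hypothesis. If $r_1, \dots, r_n$ are $Z$-module generators for $R$, the $Z$-module surjection $\mathfrak{i}^n \twoheadrightarrow \mathfrak{i}R$, $(a_j) \mapsto \sum_j a_j r_j$, shows $\mathrm{Kdim}_Z(\mathfrak{i}R) \leq \mathrm{Kdim}_Z(\mathfrak{i})$; since the Krull dimension of a finitely generated $R$-module agrees over $R$ and over $Z$ when $R$ is finite over $Z$, we obtain $\mathrm{Kdim}_Z(\mathfrak{i}) \geq \mathrm{Kdim}(Z)$, so equality holds.

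Next, in any commutative noetherian ring $Z$, Krull homogeneity forces every associated prime $\mathfrak{p}$ to satisfy $\mathrm{Kdim}(Z/\mathfrak{p}) = \mathrm{Kdim}(Z)$: indeed, if $\mathfrak{p} = \mathrm{ann}_Z(z)$ for some $0 \neq z \in Z$, then $zZ \cong Z/\mathfrak{p}$ is a nonzero ideal. In particular, every minimal prime $\mathfrak{p}$ of $Z$ has $\mathrm{Kdim}(Z/\mathfrak{p}) = d := \mathrm{Kdim}(Z)$, so each $Z/\mathfrak{p}$ is an affine $k$-algebra domain of Krull dimension $d$, in which every maximal ideal has height $d$ by the classical dimension theorem for affine domains. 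For an arbitrary maximal ideal $\mathfrak{m}$ of $Z$, choose any minimal $\mathfrak{p} \subseteq \mathfrak{m}$: lifting a descending prime chain of length $d$ from $\mathfrak{m}/\mathfrak{p}$ in $Z/\mathfrak{p}$ to $Z$ yields $\mathrm{ht}_Z(\mathfrak{m}) \geq d$; combined with the trivial reverse inequality, $Z$ is equicodimensional.

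Finally, transfer back to $R$. Let $M$ be a maximal ideal of $R$ and $\mathfrak{m} := M \cap Z$, which is maximal in $Z$; Corollary \ref{heights}(i) gives $\mathrm{ht}_R(M) \leq \mathrm{ht}_Z(\mathfrak{m}) = d = \mathrm{Kdim}(R)$. For the reverse inequality, choose a minimal prime $P \subseteq M$ of $R$ and set $\mathfrak{p} := P \cap Z$, a minimal prime of $Z$ by Proposition \ref{INCetc}(ii). Then $R/P$ is a prime affine PI $k$-algebra that is finite over the affine domain $Z/\mathfrak{p}$ of Krull dimension $d$; Schelter's dimension formula for prime affine PI $k$-algebras delivers $\mathrm{ht}_{R/P}(M/P) = \mathrm{Kdim}(R/P) = d$, so $\mathrm{ht}_R(M) \geq d$. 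The main obstacle is this final step: going-down generally fails for the extension $R \supseteq Z$, so one cannot simply lift a saturated chain from $Z$ beneath $\mathfrak{m}$ into $R$ beneath $M$, and the recourse is the dimension formula for prime affine PI $k$-algebras.
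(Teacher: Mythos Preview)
Your overall architecture is sound, but there is a real gap in the final paragraph. You assert that for a minimal prime $P$ of $R$, the contraction $\mathfrak{p} := P \cap Z$ is a minimal prime of $Z$, citing Incomparability (Proposition~\ref{INCetc}(ii)). Incomparability does not give this: it says only that strictly comparable primes of $R$ have strictly comparable contractions, which yields $\mathrm{ht}_R(P) \leq \mathrm{ht}_Z(\mathfrak{p})$, not the minimality of $\mathfrak{p}$. Indeed, Example~\ref{baddy} already shows that minimal primes of $R$ can contract to non-minimal primes of a central subring; the point that $Z = Z(R)$ rather than an arbitrary central subring does not rescue the argument via INC.

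What you actually need is $\mathrm{Kdim}(R/P) = d$, and this follows directly from Krull homogeneity of $R$: a minimal prime $P$ of a noetherian ring is an annihilator prime, so some nonzero one-sided ideal $xR$ satisfies $xP = 0$ and is therefore an $R/P$-module; Krull homogeneity forces $\mathrm{Kdim}(xR) = d$, whence $\mathrm{Kdim}(R/P) \geq d$. This is exactly the route the paper takes, and once you have it, Schelter's theorem finishes as you indicate. (From $\mathrm{Kdim}(Z/\mathfrak{p}) = d$ and the affine dimension formula one can then \emph{deduce} that $\mathfrak{p}$ is minimal, but this is a consequence, not the justification.) In short, your detour through equicodimensionality of $Z$ is correct but unnecessary: the paper bypasses it entirely, going straight from Krull homogeneity of $R$ to $\mathrm{Kdim}(R/Q) = \mathrm{Kdim}(R)$ for each minimal prime $Q$, and then invoking Schelter.
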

\begin{proof} Let $R$ be a Krull homogeneous $k$-algebra which is finite over its centre, and let $Q$ be a minimal prime of $R$. Then $\mathrm{Kdim}(R) = \mathrm{Kdim}(R/Q)$; this is true for \emph{any} Krull homogeneous ring, but is easy to see directly in the present setting - with $\mathfrak{q} := Q \cap Z(R)$, $\mathrm{Kdim}(R/Q) = \mathrm{Kdim}(Z(R)/\mathfrak{q})$,  and $\mathfrak{q}$ has non-zero annihilator in $R$. Moreover,
$$ \mathrm{Kdim}(R/Q) = \mathrm{ht}(M/Q) $$
for every maximal ideal $M$ of $R$ with $Q \subseteq M$, by the catenarity of affine PI rings, \cite[Theorem 4]{Sch}. Thus every maximal ideal of $R$ has codimension equal to $\mathrm{Kdim}R$, as required.
\end{proof}

\begin{remarks} {\rm(i)} The converse to Proposition \ref{affequi} is false: consider the commutative algebra $R =  k[X,Y]/\langle X^2, XY \rangle$. It is equicodimensional, with $\mathrm{Kdim}R = 1$, but $\overline{X}R$ is a non-zero artinian ideal of $R$.

{\rm(ii)} In a partial converse to Proposition \ref{affequi}, it will be shown in Corollary \ref{partcon} that equicodimensional $Z(R)$-Macaulay rings are Krull homogeneous.

{\rm(iii)} The above proposition is trivially false if the affine hypothesis is dropped, even if $R$ is commutative Cohen Macaulay - consider for example the localisation of $k[X,Y]$ at $<(X+1)X, (X+1)Y>$.
\end{remarks}

\begin{proposition}  \label{equidimprop} Suppose that the noetherian ring $R$ is a finite module over a central subring $C$, and that $R$ is $C$-Macaulay and equicodimensional of dimension $n$. Let $\{x_1, \ldots , x_t \}$ be a $C$-sequence on $R$. Set $\overline{R} := R/\sum_i x_iR$ and $\overline{C} := C + \sum_i x_iR/\sum_i x_iR$. Then $\overline{R}$ is $\overline{C}$-Macaulay and is equicodimensional of dimension $n-t$.
\end{proposition}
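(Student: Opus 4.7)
The plan is to argue by induction on $t$, with the substance concentrated in the base case $t=1$; Theorem \ref{CMprops}(vii) already supplies the $\overline{C}$-Macaulay conclusion at each step, so the only new content is the equicodimensionality with dimension $n-t$.

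For the inductive step from $t-1$ to $t$, I would apply the $t=1$ case to the pair $(R', C') := \bigl(R/\sum_{i<t}x_iR,\, (C+\sum_{i<t}x_iR)/\sum_{i<t}x_iR\bigr)$, which by the inductive hypothesis is $C'$-Macaulay and equicodimensional of dimension $n-(t-1)$, together with the $C'$-sequence of length one provided by the image $\overline{x}_t$. This reduces the entire problem to the case $t=1$.

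For the base case, write $x = x_1$; Theorem \ref{CMprops}(vii) gives that $\overline{R}$ is $\overline{C}$-Macaulay. To establish equicodimensionality, I would take an arbitrary maximal ideal $\overline{M}$ of $\overline{R}$, lift it to a maximal ideal $M$ of $R$ containing $xR$, and set $\mathfrak{m} := M \cap C$, so $x \in \mathfrak{m}$ and $\mathrm{ht}(M) = n$ by equicodimensionality, whence $G_C(\mathfrak{m}, R) = n$ by Theorem \ref{CMprops}(ii). The key step is to extend the singleton $C$-sequence $\{x\}$ to a maximal $C$-sequence $\{x, y_2, \ldots, y_n\}$ in $\mathfrak{m}$ on $R$, using the standard prime-avoidance argument on associated primes. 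The images $\{\overline{y}_2, \ldots, \overline{y}_n\}$ then form a $\overline{C}$-sequence of length $n-1$ in $\overline{\mathfrak{m}} := \overline{M} \cap \overline{C}$ on $\overline{R}$, giving $G_{\overline{C}}(\overline{\mathfrak{m}}, \overline{R}) \geq n-1$. Theorem \ref{CMprops}(ii) applied to $\overline{R}$ then yields $\mathrm{ht}(\overline{M}) = G_{\overline{C}}(\overline{\mathfrak{m}}, \overline{R})$, and the Generalised Principal Ideal Theorem argument used in the proof of (vii) (noting that $M$ is minimal over $xR + \sum_{i\geq 2} y_iR$ by CMprops(ii) for $R$, so $\overline{M}$ is minimal over $\sum_{i\geq 2} \overline{y}_i\overline{R}$) caps $\mathrm{ht}(\overline{M})$ at $n-1$. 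Combining gives $\mathrm{ht}(\overline{M}) = n-1$, which is exactly what is required.

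I do not expect a serious obstacle; the mildly delicate point is verifying that one can choose the maximal $C$-sequence to begin with the prescribed element $x$ and to have the uniform length $n$ independent of which $\overline{M}$ is being analysed. The equicodimensionality hypothesis on $R$, combined with Lemma \ref{equi}(i) and Theorem \ref{CMprops}(ii), guarantees the equality $G_C(\mathfrak{m}, R) = n$ uniformly across maximal ideals $\mathfrak{m}$ of $C$, and prime-avoidance handles the extension of $\{x\}$; after that the argument is a direct assembly of pieces already in hand, and the common codimension $n-1$ of the maximal ideals of $\overline{R}$ both delivers equicodimensionality and pins down $\mathrm{Kdim}(\overline{R}) = n-1$.
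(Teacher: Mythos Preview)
Your proposal is correct and follows essentially the same route as the paper: both invoke Theorem~\ref{CMprops}(vii) for the Macaulay conclusion, extend the given $C$-sequence to a maximal one of length $n$ in $\mathfrak{m}$, read off $G_{\overline{C}}(\overline{\mathfrak{m}},\overline{R})=n-t$, and convert this to $\mathrm{ht}(\overline{M})=n-t$ via the $\overline{C}$-Macaulay property. The paper does this in one stroke for general $t$ rather than reducing inductively to $t=1$, and it does not re-invoke the Generalised Principal Ideal Theorem here (that work was already absorbed into the proof of Theorem~\ref{CMprops}(vii)); your separate appeal to it is harmless but redundant once you have $\mathrm{ht}(\overline{M})=G_{\overline{C}}(\overline{\mathfrak{m}},\overline{R})$.
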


\begin{proof} The first claim is Theorem \ref{CMprops}(vii). For the second part, let $M$ be any maximal ideal of $R$ containing $\{x_1, \ldots , x_t \}$, so $G_C(\mathfrak{m},R) = n$ by our hypotheses. As in (\ref{forth}), $G_C(\overline{\mathfrak{m}}, \overline{R}) = n-t$. By the first part of the proposition, $\mathrm{ht}(M) = n-t$, so the second claim follows.
\end{proof}

\subsection{Dependence on the central subring} \label{depend}

It is immediate from Theorem \ref{CMprops}(v) and (i) that if a ring $R$ is $C$-Macaulay for some central subring $C$ over which it is a finite module, then $R$ is $Z(R)$-Macaulay. But the reverse implication is somewhat more subtle:

\begin{theorem} \label{hinge} Let $R$ be a noetherian ring which is a finite module over the central subring $C$. Suppose that $R$ is $Z(R)$-Macaulay. Then $R$ is $C$-Macaulay if and only if, for all maximal ideals $M$ and $N$ of $Z(R)$ with $M \cap C = N \cap C$, $\mathrm{ht}(M) = \mathrm{ht}(N)$.
\end{theorem}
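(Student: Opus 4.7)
The plan is to reduce to the case where $C$ is local with maximal ideal $\mathfrak{m}$, and then translate the $C$-Macaulay condition into a height comparison for the maximal ideals of $Z(R)$ that lie over $\mathfrak{m}$. By Theorem~\ref{CMprops}(iii)--(iv), both the $C$-Macaulay property and the $Z(R)$-Macaulay hypothesis descend and localize at each maximal ideal of $C$, and heights are preserved by Corollary~\ref{heights}(iii). So fix such an $\mathfrak{m}$, localize, and let $N_1, \ldots, N_k$ be the (necessarily maximal) primes of $Z(R)$ over $\mathfrak{m}$. Theorem~\ref{CMprops}(v) shows that $R$ is $C$-Macaulay iff $G_C(\mathfrak{m}, R) = \mathrm{ht}(M)$ for every maximal ideal $M$ of $R$, while Theorem~\ref{CMprops}(ii) applied under the $Z(R)$-Macaulay hypothesis gives $\mathrm{ht}(M) = \mathrm{ht}(M \cap Z(R)) = \mathrm{ht}(N_j)$, where $N_j = M \cap Z(R)$. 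Thus the $C$-Macaulay condition is equivalent to $G_C(\mathfrak{m}, R) = \mathrm{ht}(N_j)$ for every $j$; the forward implication is then transparent, since a single number cannot equal distinct heights.

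For the reverse direction, assume the $\mathrm{ht}(N_j)$ share a common value $d$. I would deduce $G_C(\mathfrak{m}, R) = d$ via the chain
\[
G_C(\mathfrak{m}, R) \;=\; G_{Z(R)}(\mathfrak{m} Z(R), R) \;=\; \min_j G_{Z(R)}(N_j, R) \;=\; \min_j \mathrm{ht}(N_j) \;=\; d.
\]
The second equality combines $\sqrt{\mathfrak{m} Z(R)} = \bigcap_j N_j$ (by Lying Over and Incomparability, Proposition~\ref{INCetc}), the Chinese Remainder splitting $Z(R)/\bigcap_j N_j \cong \prod_j Z(R)/N_j$, and the invariance of grade under passage to the radical. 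The third equality is Theorem~\ref{CMprops}(ii), applied to any maximal ideal of $R$ over $N_j$, combined with the $Z(R)$-Macaulay hypothesis.

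The main obstacle is the first equality, a general fact about grade for the finite commutative extension $C \subseteq Z(R)$: that $G_C(\mathfrak{m}, R) = G_{Z(R)}(\mathfrak{m} Z(R), R)$. The inequality $\leq$ is trivial. For $\geq$, take a maximal $C$-regular sequence $x_1, \ldots, x_n$ in $\mathfrak{m}$ on $R$; I would show that $\mathfrak{m} Z(R)$ lies in some $Z(R)$-associated prime of $\overline{R} := R/\sum_i x_i R$, so that the sequence is also maximal when regarded as a $Z(R)$-regular sequence in $\mathfrak{m} Z(R)$. Maximality in the $C$-sense yields $\overline{r} \in \overline{R}$ with $\mathrm{Ann}_C(\overline{r}) = \mathfrak{m}$; then any prime $\mathfrak{P}$ of $Z(R)$ minimal over $\mathrm{Ann}_{Z(R)}(\overline{r})$ is an associated $Z(R)$-prime of $\overline{R}$ and satisfies $\mathfrak{P} \cap C = \mathfrak{m}$ (by integrality and the maximality of $\mathfrak{m}$), so $\mathfrak{P}$ equals some $N_j$ and contains $\mathfrak{m} Z(R)$, as required.
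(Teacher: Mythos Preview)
Your argument is correct and follows essentially the same route as the paper. Both localize at a maximal ideal of $C$, take a maximal $C$-sequence $x_1,\ldots,x_s$ in $\mathfrak{m}$, and then locate an associated prime of $R/\sum x_iR$ lying over $\mathfrak{m}$ in order to compare $s$ with a $Z(R)$-grade; the paper finds this prime at the level of $R$ (a maximal ideal $M'$ annihilating a simple submodule of an Artinian piece of $R/I$, then sets $\mathfrak{m}'=M'\cap Z(R)$), whereas you stay inside $Z(R)$ and extract an associated $Z(R)$-prime $N_j$ directly from the element $\overline{r}$ with $\mathrm{Ann}_C(\overline{r})=\mathfrak{m}$.

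The only real difference is cosmetic packaging: you organize the converse as a chain $G_C(\mathfrak{m},R)=G_{Z(R)}(\mathfrak{m}Z(R),R)=\min_j G_{Z(R)}(N_j,R)=\min_j\mathrm{ht}(N_j)$, isolating as a lemma the general fact that $G_C(\mathfrak{m},R)=G_{Z(R)}(\mathfrak{m}Z(R),R)$ for any finite central extension $C\subseteq Z(R)$. This is arguably cleaner, since the first three equalities hold without any hypothesis on the heights, so one sees at once that $G_C(\mathfrak{m},R)=\min_j\mathrm{ht}(N_j)$ whenever $R$ is $Z(R)$-Macaulay, making both directions transparent. One small point to tidy: when you say the $Z(R)$-Macaulay hypothesis ``localizes at each maximal ideal of $C$'', note that Theorem~\ref{CMprops}(iii) is stated for primes of the base ring, so strictly you are inverting the multiplicative set $C\setminus\mathfrak{m}\subseteq Z(R)$ and should observe that the maximal ideals of the resulting semilocal ring $Z(R)_{\mathfrak{m}}$ are exactly the $N_j$, at each of which $R_{N_j}$ is $Z(R)_{N_j}$-Macaulay by Theorem~\ref{CMprops}(iii); this is routine but worth a sentence.
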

\begin{proof} Suppose first that $R$ is $C$-Macaulay. Let $M$ and $N$ be maximal ideals of $Z(R)$ with $M \cap C = N \cap C := \mathfrak{m}.$ Let $\widehat{M}$ and $\widehat{N}$ be maximal ideals of $R$ lying over $M$ and $N$ respectively. So
$$ \widehat{M} \cap C = M \cap C = \mathfrak{m} = N \cap C = \widehat{N} \cap C. $$
Since $R$ is $C$-Macaulay and $Z(R)$-Macaulay, two applications of Theorem \ref{CMprops}(ii) yield
$$ \mathrm{ht}(M) = \mathrm{ht}(\widehat{M}) = \mathrm{ht}(\mathfrak{m}) = \mathrm{ht}(\widehat{N})=  \mathrm{ht}(N),$$
as claimed.

Conversely, assume that $R$ is $Z(R)$-Macaulay, and that the equality of codimensions property holds for the inclusion $C \subseteq Z(R)$. Let $M$ be a maximal ideal of $R$, and set $$\mathfrak{n} := M \cap C, \quad \quad \mathfrak{m} := M \cap Z(R).$$ Localise at $\mathfrak{n}$ in $C$ to ensure that, without loss of generality, $C$ is local with $J(C) = \mathfrak{n}:$ this is legitimate by Theorem \ref{CMprops}(iv).

By Theorem \ref{CMprops}(ii), since $R$ is $Z(R)$-Macaulay,
\begin{equation} \label{first} t := \mathrm{ht}(M) = \mathrm{ht}(\mathfrak{m}) = G_{Z(R)}(\mathfrak{m},R). \end{equation}
Let $\{x_1, \ldots , x_s \}$ be a maximal $C$-sequence in $\mathfrak{n}$ on $R$, so that $s \leq t$ since $\mathfrak{n} \subseteq \mathfrak{m}.$ Set $I := \sum_{i=1}^s x_iR$. Then $(\mathfrak{n} + I)/I$ consists of zero divisors in $R/I$, so that, by \cite[Proposition 3.4]{BHM}, there exists $c \in R \setminus I$ with $c\mathfrak{n} \subseteq I.$ Since $R/\mathfrak{n}R$ is Artinian, $(cR + I)/I$ is a non-zero Artinian submodule of $R/I$. Hence there exists a simple right $R$-module $T/I$ and a maximal ideal $M'$ of $R$ with $TM' \subseteq I$. Set $\mathfrak{m}':= M' \cap Z(R)$, so $\mathfrak{m}' \cap C = \mathfrak{n}$. Since $\mathfrak{m}'$ consists of zero divisors \emph{mod}$I$, $\{x_1, \ldots , x_s \}$ is a maximal $Z(R)$-sequence in $M'$. But $R$ is $Z(R)$-Macaulay, so
\begin{equation}\label{duo} s = G_{Z(R)}(\mathfrak{m}', R) = \mathrm{ht}(M') = \mathrm{ht}(\mathfrak{m}').\end{equation}
However, by hypothesis, since $\mathfrak{m}' \cap C = \mathfrak{n} = \mathfrak{m} \cap C$,
\begin{equation} \label{trio} \mathrm{ht}(\mathfrak{m}') = \mathrm{ht}(\mathfrak{m}). \end{equation}
Now (\ref{first}), (\ref{duo}) and (\ref{trio}) imply that $s=t.$ That is, $G_C(M \cap C, R) = \mathrm{ht}(M)$. Since $M$ was an arbitrary maximal ideal of $R$, the result follows from Theorem \ref{CMprops}(v).
\end{proof}

It follows at once from Theorem \ref{hinge} that the Macaulay property of $R$ is independent of the choice of central subring, provided that $Z(R)$ is equicodimensional:

\begin {corollary} \label{independ} Let $R$ be a noetherian ring which is a finite module over the central subring $C$. If $Z(R)$ is equicodimensional, then $R$ is $Z(R)$-Macaulay if and only if $R$ is $C$-Macaulay.
\end{corollary}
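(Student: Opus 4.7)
The plan is to deduce the corollary directly from Theorem \ref{hinge} together with the easy observation already noted at the start of \S\ref{depend}.

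For the direction $C$-Macaulay $\Rightarrow$ $Z(R)$-Macaulay, no hypothesis on $Z(R)$ is needed: this is the remark immediately preceding Theorem \ref{hinge}, which follows from Theorem \ref{CMprops}(v) together with the inequalities in Theorem \ref{CMprops}(i) applied to the inclusion $C \subseteq Z(R)$. I would just cite this.

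For the direction $Z(R)$-Macaulay $\Rightarrow$ $C$-Macaulay, I would invoke Theorem \ref{hinge}: it suffices to verify that whenever $M$ and $N$ are maximal ideals of $Z(R)$ with $M \cap C = N \cap C$, one has $\mathrm{ht}(M) = \mathrm{ht}(N)$. But this is trivial under the assumption that $Z(R)$ is equicodimensional, since then every maximal ideal of $Z(R)$ has codimension $\mathrm{Kdim}(Z(R))$, so the equality holds regardless of the chosen $C$.

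There is essentially no obstacle here: the corollary is an immediate unpacking of Theorem \ref{hinge} in the special case where the codimension hypothesis of that theorem is made vacuous by equicodimensionality of $Z(R)$. The only point to be a bit careful about is that the statement is an ``if and only if,'' so both implications must be recorded, even though one of them is already implicit in the preamble to Theorem \ref{hinge}.
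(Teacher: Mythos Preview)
Your proposal is correct and follows exactly the paper's approach: the paper simply states that the corollary follows at once from Theorem \ref{hinge}, and you have accurately unpacked both directions, citing the preamble to \S\ref{depend} for one implication and observing that equicodimensionality of $Z(R)$ trivially verifies the height hypothesis of Theorem \ref{hinge} for the other.
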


\begin{remarks}\label{blah}{\rm(i)}  The corollary shows that independence of the central subalgebra for the Macaulay property holds in particular if $R$ is a prime affine $k$-algebra, $k$ a field, by the Artin-Tate lemma \cite[Lemma 13.9.10]{MR}.

{\rm(ii)} Corollary \ref{independ} is false without the equicodimensionality hypothesis. Consider again Example \ref{baddy}, $R = k[X] \oplus k$, where $k$ is a field, with $C = k[(X,1)].$ This is $R$-Macaulay, but is not $C$-Macaulay. For the maximal ideal $P = \langle X-1 \rangle \oplus k$ of $R$ has $\mathrm{ht}(P)=1$ but $G_C(P,R) = 0$. Errors of this type are very easy to make, and have occurred quite frequently in the literature. Examples which are the fault of the first author of the current paper are described in remarks (iii) and (iv) below. A further example is \cite[2.1 - 2.5]{BGG}, where several results are claimed of the ``independence of coefficient ring'' sort, which are manifestly false, with counterexamples similar to the one just given. However, in the key Lie-theoretic applications  in \cite{BGG}, equicodimensionality is valid and the applications survive.

{\rm(iii)} Remark (ii) shows that \cite[Proposition 2.7]{BrH} is not correct. This result concerns the \emph{homologically homogeneous} property, whose definition is recalled in Definition \ref{injhom}(iii) below. \cite[Proposition 2.7]{BrH} asserts that the homologically homogeneous property for a ring is independent of our choice of central subring, but it is easy to see that the hereditary commutative algebra $R = k[X] \oplus k$, which is - trivially - hom.hom. over $R$, is \emph{not} homologically homogeneous with respect to $C = k[(X,1)]$. For, let $M := \langle X-1 \rangle \oplus k$ and $N :=k[X] \oplus \{0\}$, maximal ideals of $R$ with $M \cap C = N \cap C = \langle X-1 \rangle \oplus \{0\}$; but $\mathrm{pr.dim}_R(R/M) = 1$, whereas $\mathrm{pr.dim}_R(R/N) = 0.$ The error in the proof of \cite[Proposition 2.7]{BrH} lies in the claim that, for a hom hom. ring $R$, $\mathrm{ht}(M) = \mathrm{ht}(M \cap C)$ for every maximal ideal $M$ of $R$ or $Z(R)$; the equicodimensionality property is enough to ensure this, since the Krull dimensions of $R$, $Z(R)$ and $C$ are necessarily equal, and hence so are codimensions in the equicodimensional setting. To recover \cite[Proposition 2.7]{BrH}, it is thus necessary to impose the additional hypothesis that $R$ is equicodimensional, taking note of Lemma \ref{equi}(i).

{\rm(iv)} A parallel correction is required to \cite[Corollary 3.6]{BrH2}, concerning the independence of the \emph{injectively homogeneous} property on the choice of central subring. The relevant definition is recalled in Definition \ref{injhom}(ii) below, and the same example as in (iii) shows that \cite[Corollary 3.6]{BrH2} is false. Again, the result and its proof remain valid if $R$ is assumed to be equicodimensional.
\end{remarks}

\section{Consequences of the centrally Macaulay property} \label{CenMacCon}

\subsection{Chains of primes; catenarity} \label{cat}
Recall that a ring $R$ is \emph{catenary} if, given any two primes $P$ and $Q$ of $R$ with $Q \subset P$, all saturated chains of primes between $Q$ and $P$ have the same length.

The key parts (i) and (iii) of the following result are due to Goto and Nishida, \cite[Corollary 1.3]{GN}.

\begin{theorem} \label{chains} \label{catenary} Let $R$ be a noetherian ring which is a finite module over a central subring $C$. Suppose that $R$ is $C$-Macaulay.

{\rm (i)} $R$ is catenary. More precisely, every saturated chain of prime ideals between primes $P$ and $Q$ of $R$ with $Q \subseteq P$ has length $\mathrm{ht}(P) - \mathrm{ht}(Q)$.

{\rm(ii)} $C$ is catenary. More precisely, every saturated chain of prime ideals between primes $\mathfrak{p}$ and $\mathfrak{q}$ of $C$ with $\mathfrak{q} \subseteq \mathfrak{p}$ has length $\mathrm{ht}(\mathfrak{p}) - \mathrm{ht}(\mathfrak{q})$.

{\rm(iii)} If $R$ (or equivalently $C$) is equicodimensional, then, for every prime ideal $P$ of $R$,
$$ \mathrm{Kdim}(R) = \mathrm{Kdim}(R/P) + \mathrm{ht}(P).$$

{\rm(iv)} If $R$ (or equivalently $C$) is equicodimensional), then, for every prime ideal $\mathfrak{p}$ of $C$,
$$ \mathrm{Kdim}(C) = \mathrm{Kdim}(C/\mathfrak{p}) + \mathrm{ht}(\mathfrak{p}).$$
\end{theorem}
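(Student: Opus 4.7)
Parts (i) and (iii) are attributed to Goto--Nishida \cite{GN}; my plan sketches the strategy for (i) and then derives (ii), (iii), (iv) from it together with the earlier material.

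The plan for (i) is to reduce to the statement that a saturated cover $Q \subsetneq P$ in $R$ satisfies $\mathrm{ht}(P) = \mathrm{ht}(Q) + 1$; the full claim then follows by composing covers. Localise at $\mathfrak{p} := P \cap C$ using Theorem \ref{CMprops}(iii) and Corollary \ref{heights}(iii), so that $C$ becomes local with maximal ideal $\mathfrak{p}$ and $P$ is a maximal ideal of $R$. I then induct on $n := \mathrm{ht}(P)$; by Theorem \ref{CMprops}(ii), $n = G_C(\mathfrak{p},R)$. The base cases $n \leq 1$ are immediate. For $n \geq 2$ with $\mathrm{ht}(Q) \geq 1$, Theorem \ref{CMprops}(ii) implies $\mathfrak{q} := Q \cap C$ properly contains every minimal prime of $C$, so prime avoidance produces an $x \in \mathfrak{q}$ regular on $R$ (zero-divisors of $R$ being the union of its minimal primes, by Theorem \ref{CMprops}(vi) or directly from (ii)). Passing to $\overline{R} = R/xR$, which is $\overline{C}$-Macaulay by Theorem \ref{CMprops}(vii), the cover descends to a saturated cover $\overline{Q} \subsetneq \overline{P}$, and a further application of Theorem \ref{CMprops}(ii) inside $\overline{R}$ drops both heights by one. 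The inductive hypothesis in $\overline{R}$ then closes this case. The remaining case is when $Q$ is itself minimal, where $\mathrm{ht}(P) = 1$ must be shown directly, and this is the main obstacle: saturation of $Q \subsetneq P$ combined with the generalised Principal Ideal Theorem only yields the relative bound $\mathrm{ht}(P/Q) \leq 1$; extracting from this the absolute bound $\mathrm{ht}(P) \leq 1$ is the substantive content of \cite{GN}, and uses the height/grade equality in Theorem \ref{CMprops}(ii) to rule out longer chains to $P$ coming through other minimal primes.

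With (i) in hand, (ii) follows by an easy lifting argument. Given a saturated cover $\mathfrak{q} \subsetneq \mathfrak{p}$ in $C$, Lying Over (Proposition \ref{INCetc}(i)) produces $Q \in \mathrm{Spec}(R)$ with $Q \cap C = \mathfrak{q}$, and Going Up (Proposition \ref{INCetc}(iii)) produces $Q' \supsetneq Q$ with $Q' \cap C = \mathfrak{p}$. Any intermediate prime $\widetilde{Q}$ between $Q$ and $Q'$ would, by Incomparability (Proposition \ref{INCetc}(ii)), satisfy $\mathfrak{q} \subsetneq \widetilde{Q} \cap C \subsetneq \mathfrak{p}$, contradicting saturation in $C$. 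Thus $Q \subsetneq Q'$ is a saturated cover in $R$, and (i) combined with Theorem \ref{CMprops}(ii) gives
\[
\mathrm{ht}(\mathfrak{p}) - \mathrm{ht}(\mathfrak{q}) \;=\; \mathrm{ht}(Q') - \mathrm{ht}(Q) \;=\; 1.
\]
Composing saturated covers yields (ii).

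For (iii) and (iv), assume equicodimensionality; by Lemma \ref{equi}(i) every maximal ideal of $R$ (resp.\ of $C$) has codimension equal to $\mathrm{Kdim}(R) = \mathrm{Kdim}(C)$. Fix a prime $P$ of $R$ and choose any maximal ideal $M \supseteq P$. Part (i) and equicodimensionality give
\[
\mathrm{ht}(M/P) \;=\; \mathrm{ht}(M) - \mathrm{ht}(P) \;=\; \mathrm{Kdim}(R) - \mathrm{ht}(P).
\]
Since $R/P$ is a prime noetherian PI ring, module-finite over the central domain $C/\mathfrak{p}$, its Krull dimension equals the classical Krull dimension, and so equals $\sup\{\mathrm{ht}(M/P):M\supseteq P \text{ maximal}\}$; this supremum is constant at $\mathrm{Kdim}(R) - \mathrm{ht}(P)$, proving (iii). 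Part (iv) is the parallel computation inside $C$ with (ii) in place of (i).
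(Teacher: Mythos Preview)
Your proposal is correct. For parts (i), (iii) and (iv) you proceed essentially as the paper does: (i) is reduced to \cite{GN} after localising at $\mathfrak{p}$ (the paper simply cites this without the sketch you provide), and (iii)--(iv) follow from (i) plus equicodimensionality and the height equalities of Theorem~\ref{CMprops}(ii), just as in the paper's argument.

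Your route for (ii), however, is genuinely different and more direct. The paper first establishes (iii) and then (iv) in the equicodimensional local situation, and only afterwards proves (ii) by a contradiction argument: assuming a short saturated chain in $C$, it locates an adjacent step $\mathfrak{q}_i \subsetneq \mathfrak{q}_{i+1}$ with height gap at least~2, lifts to primes $Q \subsetneq P$ of $R$ via Going Up, invokes (i) to insert an intermediate prime $J$, and contracts $J$ back to $C$ for the contradiction. Your argument bypasses (iii) and (iv) entirely: you lift a saturated cover $\mathfrak{q} \subsetneq \mathfrak{p}$ to $Q \subsetneq Q'$ in $R$, use Incomparability to see this lift is itself a saturated cover, and then read off $\mathrm{ht}(\mathfrak{p}) - \mathrm{ht}(\mathfrak{q}) = \mathrm{ht}(Q') - \mathrm{ht}(Q) = 1$ from (i) and Theorem~\ref{CMprops}(ii). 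This is cleaner, and shows that (ii) depends only on (i) together with the basic prime correspondences, not on any equicodimensionality reduction.
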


\begin{proof}{\rm (i)} Since we can without loss localise to $R_{\mathfrak{p}}$ by Theorem \ref{CMprops}(iii) and Corollary \ref{heights}(iii), this is the first and last parts of \cite[Corollary 1.3]{GN}.

{\rm(iii)} Suppose that $R$ is equicodimensional. The equivalence of this hypothesis with the equicodimensionality of $C$ is Lemma \ref{equi}. Let $P$ be a prime ideal of $R$, and let $M$ be a maximal ideal of $R$ with $P \subseteq M$. Write $\mathfrak{m} = M \cap R$. Since $R$ is equicodimensional, and by Corollary \ref{heights}(iii),
\begin{equation}\label{go} \mathrm{Kdim}(R) = \mathrm{ht}(M) = \mathrm{ht}(M_{\mathfrak{m}}) = \mathrm{Kdim}(R_{\mathfrak{m}}). \end{equation}
Now $R/P$ is equicodimensional, by (i) and the equicodimensionality of $R$. Hence
\begin{equation}\label{going} \mathrm{Kdim}(R/P) = \mathrm{ht}(M/P) = \mathrm{ht}(M_{\mathfrak{m}}/P_{\mathfrak{m}}) = \mathrm{Kdim}(R_{\mathfrak{m}}/P_{\mathfrak{m}}). \end{equation}
Since the result we want to prove is true when $C$ is local, by \cite[Corollary 1.3]{GN}, Theorem \ref{CMprops}(iii) implies that
\begin{equation}\label{gone} \mathrm{Kdim}(R_{\mathfrak{m}}) = \mathrm{Kdim}(R_{\mathfrak{m}}/P_{\mathfrak{m}}) + \mathrm{ht}_{R_{\mathfrak{m}}}(P_{\mathfrak{m}}).
\end{equation}
It follows from (\ref{go}), (\ref{going}), (\ref{gone}) and another use of Corollary \ref{heights}(iii) that
$$ \mathrm{Kdim}(R) = \mathrm{Kdim}(R/P) + \mathrm{ht}(P). $$

(iv) Let $\mathfrak{p}$ be a prime ideal of $C$. Then (iv) follows from (iii) by taking a prime $P$ of $R$ which lies over $\mathfrak{p}$ and applying (iii) to $P$, noting that $R$ and $C$ (and similarly $R/P$ and $C/\mathfrak{p}$) have the same Krull dimensions, and using also Theorem \ref{CMprops}(ii).

(ii) Let $\mathfrak{p}$ and $\mathfrak{q}$ be as stated. By Theorem \ref{CMprops}(iii) and Corollary \ref{heights}(iii) we can localise $R$ at $\mathfrak{p}$, so without loss $C$ is local with maximal ideal $\mathfrak{p}$, and hence $C$ and $R$ are equicodimensional, by Lemma \ref{equi}. By equicodimensionality and (iv),
\begin{equation}\label{hop}  \mathrm{ht}(\mathfrak{p}) = \mathrm{Kdim}(C) = \mathrm{Kdim}(C/\mathfrak{q}) + \mathrm{ht}(\mathfrak{q}), \end{equation}
so that
\begin{equation} \label{henry} \mathrm{Kdim}(C/\mathfrak{q}) = \mathrm{ht}(\mathfrak{p}) - \mathrm{ht}(\mathfrak{q}).
\end{equation}
 Suppose that there is a saturated chain of primes $\mathfrak{q} = \mathfrak{q}_0 \subsetneq \cdots \subsetneq \mathfrak{q}_t = \mathfrak{p}$, so that
\begin{equation} \label{tough} t \leq \mathrm{ht}(\mathfrak{p}) - \mathrm{ht}(\mathfrak{q})
\end{equation}
by (\ref{henry}). Suppose that the inequality (\ref{tough}) is strict. Then, bearing in mind that $\mathfrak{p}$ is maximal, there is an inclusion $\mathfrak{q}_i \subsetneq \mathfrak{q}_{i+1}$ where $\mathrm{Kdim}(C/\mathfrak{q}_{i}) \geq \mathrm{Kdim}(C/\mathfrak{q}_{i+1}) + 2.$ Replacing $\mathfrak{q}$ and $\mathfrak{p}$ by  $\mathfrak{q}_{i}$ and  $\mathfrak{q}_{i+1}$ respectively, and noting (iv), we can assume that
\begin{equation}\label{yuck} \mathrm{ht}(\mathfrak{p}) - \mathrm{ht}(\mathfrak{q}) \geq 2 > 1 = \mathrm{ht}(\mathfrak{p}/\mathfrak{q}),\end{equation}
(though $\mathfrak{p}$ may no longer be maximal).

By Going Up, Corollary \ref{heights}(iii), there are primes $Q$ and $P$ of $R$ lying over $\mathfrak{q}$ and $\mathfrak{p}$ respectively, with $Q \subset P$. By Theorem \ref{CMprops}(ii) and (\ref{yuck}), $\mathrm{ht}(P) - \mathrm{ht}(Q) \geq 2$. By (i), $\mathrm{ht}(P/Q) \geq 2$; in particular there is a prime ideal $J$ of $R$ with $Q \subsetneq J \subsetneq P$. It follows that $\mathfrak{q} \subsetneq \mathfrak{j} := J \cap C \subsetneq \mathfrak{p}$, contradicting (\ref{yuck}). The assumption that (\ref{tough}) is strict must therefore be false, and so the proof is complete.
\end{proof}

We can now prove the promised partial converse to Proposition \ref{affequi}.

\begin{corollary}\label{partcon} Let $R$ be a noetherian ring which is a finite module over $Z(R)$. Suppose that $R$ is equicodimensional and $Z(R)$-Macaulay. Then $R$ is Krull homogeneous.
\end{corollary}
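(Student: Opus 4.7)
The plan is to prove that every non-zero right ideal $I$ of $R$ has $\mathrm{Kdim}(I) = n := \mathrm{Kdim}(R)$; symmetric reasoning then handles left ideals, giving Krull homogeneity. The key is to exhibit inside $I$ a finitely generated $R$-submodule whose annihilator is a minimal prime of $R$, and then to invoke equicodimensionality together with Theorem \ref{chains} to read off its Krull dimension.

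First, using the right noetherian property, I would pick a non-zero right submodule $N_0 \subseteq I$ whose two-sided annihilator $P := \mathrm{ann}_R(N_0) = \{r \in R : N_0 r = 0\}$ is maximal in $\{\mathrm{ann}_R(N) : 0 \neq N \subseteq I \text{ a right submodule}\}$; the standard noncommutative associated-prime argument then shows $P$ is a prime ideal of $R$. The crucial observation is that $P$ is minimal: since $N_0 \cdot P = 0$ and $N_0 \neq 0$, every element of $\mathfrak{p} := P \cap Z(R)$ is a zero divisor on the $Z(R)$-module $R$, so no element of $\mathfrak{p}$ can begin a $Z(R)$-sequence on $R$, giving $G_{Z(R)}(\mathfrak{p}, R) = 0$. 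Theorem \ref{CMprops}(ii), applied via the $Z(R)$-Macaulay hypothesis, then forces $\mathrm{ht}(P) = \mathrm{ht}(\mathfrak{p}) = 0$.

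To conclude, I would read off $\mathrm{Kdim}(N_0)$. Since $\mathrm{ann}_{Z(R)}(N_0) = \mathfrak{p}$, $N_0$ is a finitely generated faithful $Z(R)/\mathfrak{p}$-module, so $\mathrm{Kdim}_{Z(R)}(N_0) = \mathrm{Kdim}(Z(R)/\mathfrak{p})$ by standard commutative support theory; since $R$ is finite over $Z(R)$, this equals $\mathrm{Kdim}_R(N_0)$. Because $R$ is equicodimensional, so is $Z(R)$ by Lemma \ref{equi}(i), and Theorem \ref{chains}(iv) delivers $\mathrm{Kdim}(Z(R)/\mathfrak{p}) = \mathrm{Kdim}(Z(R)) - \mathrm{ht}(\mathfrak{p}) = n - 0 = n$. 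Hence $\mathrm{Kdim}(I) \geq \mathrm{Kdim}(N_0) = n$, with the reverse inequality immediate from $I \subseteq R$. The one delicate point is confirming that the maximal annihilator $P$ is genuinely a two-sided prime ideal, but this is the familiar noncommutative analogue of the commutative associated-prime construction; thereafter everything follows by direct appeal to the results already established in the paper.
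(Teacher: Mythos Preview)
Your argument is correct and follows essentially the same strategy as the paper's proof: both locate an annihilator prime $P$ of a nonzero submodule, show that the $Z(R)$-Macaulay hypothesis forces $P$ to be a minimal prime, and then use equicodimensionality together with Theorem~\ref{chains} to conclude that the submodule has Krull dimension equal to $\mathrm{Kdim}(R)$. The only differences are in packaging: the paper obtains minimality of $P$ by invoking the artinian quotient ring (Theorem~\ref{CMprops}(vi)) and Small's theorem, whereas you argue directly that $G_{Z(R)}(\mathfrak{p},R)=0$ and apply Theorem~\ref{CMprops}(ii); and the paper reads off the Krull dimension via Theorem~\ref{chains}(iii) on the $R$ side, while you pass to $Z(R)$ and use Theorem~\ref{chains}(iv).
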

\begin{proof} Since $R$ is $Z(R)$-Macaulay it has an artinian quotient ring by Theorem \ref{CMprops}(vi). In particular this means that every (right or left) annihilator prime ideal of $R$ is a minimal prime, \cite[Proposition 3.2.4(v) and Corollary 4.1.4]{MR}. Now Theorem \ref{chains}(iii) implies that, for every minimal prime $P$ of $R$,
$$ \mathrm{Kdim}(R/P) = \mathrm{Kdim}(R).$$
If $R$ has a non-zero right ideal $I$ with $\mathrm{Kdim}(I) < \mathrm{Kdim}(R)$, then $R$ has such a right ideal with prime annihilator. The above facts thus show that no such $I$ can exist, proving that $R$ is Krull homogeneous.
\end{proof}

\begin{remarks}\label{catchat} Theorem \ref{chains} of Goto and Nishida replaces an earlier incomplete proof purporting to prove the same result, in \cite{BHM}.
\end{remarks}

\subsection{Indecomposible centrally Macaulay rings: the affine case} \label{indec}
In this subsection we  prove

\begin{theorem}\label{affinesplit} Let the ring $R$ be a finite module over the central subring $C$. Suppose that $C$ (or, equivalently, $R$), is affine over a field, and that $R$ is $C$-Macaulay. Then $R$ and $C$ are direct sums,
$$R = \oplus_i R_i \supseteq \oplus_i C_i = C,$$
where $C_i = C \cap R_i$. For all $i$, the algebras $R_i$ and $C_i$ are equicodimensional and Krull homogeneous; and $R_i$ is $C_i$-Macaulay.
\end{theorem}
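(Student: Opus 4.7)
The plan is to decompose $C$ (and simultaneously $R$) by stratifying $\mathrm{Max}(C)$ according to the codimension of the maximal ideal; the idempotents one needs are forced to lie in $C\subseteq Z(R)$, which is what makes the two decompositions compatible.

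By Theorem \ref{CMprops}(ii), $\mathrm{ht}(M)=\mathrm{ht}(M\cap C)$ for every maximal ideal $M$ of $R$; let $\{d_1,\ldots,d_r\}$ be the finite set of values appearing and put $T_d:=\{\mathfrak{m}\in\mathrm{Max}(C):\mathrm{ht}(\mathfrak{m})=d\}$, so that $\mathrm{Max}(C)=\bigsqcup_d T_d$. The crucial point is that each $T_d$ is Zariski closed in $\mathrm{Max}(C)$: for a maximal ideal $\mathfrak{m}$ of $C$ containing a minimal prime $\mathfrak{p}$, catenarity of $C$ (Theorem \ref{chains}(ii)) gives $\mathrm{ht}(\mathfrak{m})=\mathrm{ht}(\mathfrak{m}/\mathfrak{p})$, and since the affine prime ring $C/\mathfrak{p}$ is trivially Krull homogeneous, Proposition \ref{affequi} identifies this with $\mathrm{Kdim}(C/\mathfrak{p})$. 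Thus $T_d=V(B_d)\cap\mathrm{Max}(C)$, where $B_d$ is the intersection of the minimal primes $\mathfrak{p}$ of $C$ with $\mathrm{Kdim}(C/\mathfrak{p})=d$, and the $T_d$ form a finite disjoint closed cover of $\mathrm{Max}(C)$---hence each is also open.

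Consequently the $B_d$ are pairwise comaximal and $\bigcap_d B_d=\mathrm{Nil}(C)$, which is nilpotent by noetherianity. Chinese Remainder together with the lifting of idempotents modulo a nilpotent ideal produces orthogonal idempotents $e_d\in C$ with $\sum_d e_d=1$; setting $C_d:=e_d C$ yields $C=\bigoplus_d C_d$. Because $C\subseteq Z(R)$, each $e_d$ is central in $R$, giving $R=\bigoplus_d R_d$ with $R_d:=e_d R$, and an elementary orthogonality computation shows $C\cap R_d=e_d C=C_d$.

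It remains to verify the listed properties. Localising $C$ at the maximal ideal $\mathfrak{m}\oplus\bigoplus_{d'\ne d}C_{d'}$ (for $\mathfrak{m}\in\mathrm{Max}(C_d)$) inverts $e_d$ and kills every $e_{d'}$ with $d'\ne d$, identifying the $C$- and $R$-localisations with $(C_d)_{\mathfrak{m}}$ and $(R_d)_{\mathfrak{m}}$, so Theorem \ref{CMprops}(iv) shows that $R_d$ is $C_d$-Macaulay. Maximal ideals of $R_d$ are in bijection with maximal ideals of $R$ containing $1-e_d$; their contractions to $C$ lie in $T_d$, so Theorem \ref{CMprops}(ii) forces them all to have height $d$, making $R_d$ equicodimensional of dimension $d$. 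Lemma \ref{equi}(i) then gives $C_d$ equicodimensional, and Corollary \ref{partcon} (invoking the opening observation of Section \ref{depend} to pass from $C_d$-Macaulay to $Z(R_d)$-Macaulay) yields $R_d$ Krull homogeneous; $C_d$ inherits Krull homogeneity because $\mathrm{Ann}_{C_d}(IR_d)=\mathrm{Ann}_{C_d}(I)$ for any ideal $I$ of $C_d$, so any subcritical $I\trianglelefteq C_d$ would produce a subcritical ideal $IR_d\trianglelefteq R_d$. The main obstacle is the closedness of $T_d$: this is where both the affine hypothesis (through Proposition \ref{affequi}) and catenarity are essential, while everything afterwards is orthogonal-idempotent bookkeeping and the local–global Theorem \ref{CMprops}(iv).
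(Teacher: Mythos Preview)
Your argument is correct and follows a genuinely different route from the paper's proof. The paper proceeds via noncommutative primary decomposition of $R$: it takes an irredundant primary decomposition $0=\bigcap_j I_j$, groups the $I_j$ by $\mathrm{Kdim}(R/I_j)$ to form ideals $J_\ell$, and then proves the $J_\ell$ are pairwise comaximal by showing that a maximal ideal containing two $J_\ell$'s of different Krull dimension would violate Theorem~\ref{chains}(i) together with Schelter's catenarity theorem for affine PI rings. You instead work entirely on the commutative side, stratifying $\mathrm{Max}(C)$ by height and invoking Theorem~\ref{chains}(ii) plus equicodimensionality of affine domains to see that each stratum is clopen; the idempotents then arise in $C$ via Chinese Remainder and lifting modulo the nilradical.

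Your approach has two concrete advantages. First, it avoids the machinery of noncommutative primary decomposition (Proposition~\ref{prim} and its supporting references). Second, because your idempotents are constructed in $C$ from the outset, the compatible splitting $C=\oplus_i C_i$ with $C_i=C\cap R_i$ is immediate, whereas in the paper's argument the idempotents arise in $R$ and one must still check they land in $C$. The paper's route, on the other hand, makes the Krull homogeneity of each $R_i$ transparent as a direct consequence of Proposition~\ref{prim}(iii), while you obtain it only after establishing the $C_i$-Macaulay property and passing through Corollary~\ref{partcon}. Your deduction of Krull homogeneity for $C_d$ from that of $R_d$ via the annihilator identity $\mathrm{Ann}_{C_d}(IR_d)=\mathrm{Ann}_{C_d}(I)$ is a nice touch; the paper's proof does not address this point explicitly.
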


The equivalence of the affine hypotheses on $R$ and $C$ is the Artin-Tate lemma, \cite[Lemma 13.9.10]{MR}. The theorem is a generalisation of a well-known result in the commutative theory, \cite[Exercise 18.6]{E}. It is easily seen to be false if the affine hypothesis is omitted: for example, let $k$ be a field and let $R$ be the localisation of $k[X,Y]$ at the semiprime ideal $(X+1)X, (X+1)Y \rangle$.

The proof of Theorem \ref{affinesplit} will need the theory of primary decomposition in noncommutative rings initiated in 1962 by Gabri$\grave{\mathrm{e}}$l \cite{Ga} and developed by Gordon and others \cite{Go}, \cite{BrP}, \cite{Ja}. We recall here the minimum required.

\begin{definition} \label{primary} Let $P$ be a prime ideal of the noetherian ring $R$.

{\rm(i)} The ring $R$ is \emph{right} $P$-\emph{primary} if $P$ is the unique right annihilator prime ideal of (the right $R$-module) $R$; that is, $P$ is the unique \emph{right associated prime} of $R$. If $R$ is $P$-primary for some prime ideal $P$, then $R$ is called \emph{primary}.

{\rm(ii)} A \emph{primary decomposition} of $R$ is a finite intersection $0 = \bigcap_{j=1}^n I_j$, where $\{I_j : 1 \leq j \leq n \}$ is a collection of ideals of $R$, with $R/I_j$ being $P_j/I_j$-primary for $j = 1, \ldots , n$, and with the intersection of every proper subset of the $\{I_j\}$ being non-zero.
 \end{definition}

The following proposition is valid under much weaker hypotheses on $R$, but we limit the discussion to what is needed for the present proof.

\begin{proposition} \label{prim} Let the noetherian ring $R$ be a finitely generated module over the central subring $C$.

{\rm(i)} The ring $R$ admits an irredundant primary decomposition.
Let
\begin{equation}\label{cap} 0 \; = \; \bigcap_{j=1}^n I_j
 \end{equation}
be one primary decomposition as ensured by (i), with $R/I_j$ being $P_j/I_j$-primary, $j = 1, \ldots , n.$

{\rm(ii)} If $R$ is primary, then it has an artinian quotient ring. In particular, $P_j/I_j$ is a minimal prime ideal of $R/I_j$, for $j = 1, \ldots , n$.

{\rm(iii)} If $R$ is $P$-primary, then $\mathrm{Kdim}(R) = \mathrm{Kdim}(R/P) = \mathrm{Kdim}(R/Q)$ for every minimal prime ideal $Q$ of $R$.

{\rm(iv)} Suppose that $R$ has an artinian quotient ring. Then, in the notation of (\ref{cap}),  for every $j = 1, \ldots , n$, all the minimal prime ideals over $I_j$, including $P_j$, are minimal prime ideals of $R$
\end{proposition}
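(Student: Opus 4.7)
The plan is to combine the noncommutative primary decomposition theory cited in the paper (Gabriel, Gordon, Brown--Procesi, Jategaonkar) with the characterisation, also invoked in the proof of Corollary \ref{partcon}, that a noetherian ring $R$ has a classical artinian quotient ring if and only if its right annihilator primes coincide with its minimal primes (\cite[Proposition 3.2.4(v) and Corollary 4.1.4]{MR}). The key auxiliary fact used throughout is that every minimal prime $Q$ of a right noetherian ring $R$ is a right annihilator prime of $R_R$: the prime Goldie ring $R/Q$ has a uniform right ideal whose annihilator over $R$ realises $Q$. Part (i) is then immediate, being a direct quote from the existence theorem for irredundant primary decompositions in the cited references; no feature of the PI setting is needed.

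For (ii), assume $R$ is $P$-primary, so $P$ is the unique right annihilator prime of $R_R$. By the auxiliary fact just mentioned, every $Q \in \mathrm{Min}(R)$ is a right annihilator prime, hence equals $P$; so $P$ is the unique minimal prime of $R$. Both $\mathrm{Ass}(R_R)$ and $\mathrm{Min}(R)$ therefore equal $\{P\}$, and the characterisation above forces $R$ to have an artinian quotient ring. The ``in particular'' statement follows by applying this same argument inside each $P_j/I_j$-primary factor $R/I_j$.

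Part (iii) is then a short corollary: by (ii), $P$ is the unique minimal prime of the $P$-primary ring $R$, so the standard identity $\mathrm{Kdim}(R) = \max_{Q \in \mathrm{Min}(R)} \mathrm{Kdim}(R/Q)$ (valid because the nilradical of a noetherian ring is nilpotent) collapses to $\mathrm{Kdim}(R) = \mathrm{Kdim}(R/P)$. For (iv), one further ingredient from the cited theory is needed: in an irredundant primary decomposition of a noetherian ring, $\{P_j\}$ coincides with the set of right annihilator primes of $R_R$. Combined with the hypothesis that $R$ has an artinian quotient ring -- so $\mathrm{Ass}(R_R) = \mathrm{Min}(R)$ -- this gives $\{P_j\} = \mathrm{Min}(R)$, so each $P_j$ is a minimal prime of $R$. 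To upgrade to ``every minimal prime over $I_j$'', apply (ii) to the primary ring $R/I_j$: its unique minimal prime is $P_j/I_j$, so $P_j$ is the only minimal prime over $I_j$, and it has just been shown to lie in $\mathrm{Min}(R)$.

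The main obstacle will be pinning down the two background statements in clean form: that minimal primes of a right noetherian ring are right annihilator primes, and that the primes occurring in an irredundant primary decomposition are exactly the right annihilator primes of $R_R$. Neither is deep, but the noncommutative literature is less uniform than the commutative one, and some care is needed to match conventions between \cite{Ga}, \cite{Go}, \cite{BrP}, \cite{Ja} and the rest of the paper. Once these two facts are cleanly identified, the body of the argument is essentially bookkeeping built on the Small-type criterion already in use.
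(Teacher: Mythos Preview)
Your argument rests on the auxiliary claim that every minimal prime of a noetherian ring is a right annihilator prime of $R_R$. This is false, even for rings finite over their centres. Take $R$ to be the ring of $2\times 2$ upper triangular matrices over $k[x]$, with centre the scalar matrices $k[x]$. The minimal primes are
\[
P_1 = \begin{pmatrix} 0 & k[x] \\ 0 & k[x] \end{pmatrix}, \qquad P_2 = \begin{pmatrix} k[x] & k[x] \\ 0 & 0 \end{pmatrix}.
\]
A direct check shows that every nonzero right ideal of $R$ with $I=0$ in the $(1,1)$ slot has right annihilator exactly $P_2$, while any right ideal with nonzero $(1,1)$ entry has annihilator $0$. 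Thus $\mathrm{Ass}(R_R) = \{P_2\}$, so $R$ is right $P_2$-primary, yet $P_1$ is a minimal prime which is \emph{not} a right annihilator prime. In particular a primary ring need not have a unique minimal prime, contrary to what you deduce in (ii).

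This single error propagates through your proofs of (ii), (iii) and (iv). In (iii) you invoke the supposed uniqueness of the minimal prime to collapse the maximum; in (iv) you claim $P_j/I_j$ is the only minimal prime of $R/I_j$, which is again the same false step. The paper avoids this by quoting \cite[Theorem~6.2(i)]{BrP} for (ii), which gives both the artinian quotient ring and Krull homogeneity directly; Krull homogeneity together with \cite[Theorem~5.4(ii)]{BrP} then handles (iii) without any uniqueness claim. For (iv) the paper needs genuinely more: it shows that every minimal prime over $I_j$ lies in the clique of $P_j$, and then uses \cite[Proposition~7.4.8]{Ja} (the minimal primes of a ring with artinian quotient ring form a union of cliques) to conclude. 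None of this is recoverable from the Small-type criterion alone.
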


\begin{proof} (i) \cite[Corollary 2.4]{Go}.

(ii), (iii) Suppose that $R$ is $P$-primary. By \cite[Theorem 6.2(i)]{BrP}, $R$ has an artinian quotient ring, and is Krull homogeneous, so that $\mathrm{Kdim}(X) = \mathrm{Kdim}(R)$ for every non-zero right or left ideal of $R$. In particular, $\mathrm{Kdim}(R/P) =  \mathrm{Kdim}(R)$, since by definition $P$  is the right annihilator of a non-zero ideal of $R$. Hence, $P$ is a minimal prime. Krull homogeneity, combined with \cite[Theorem 5.4(ii)]{BrP}, means that every minimal prime ideal $Q$ of $R$ satisfies $\mathrm{Kdim}(R/Q) = \mathrm{Kdim}(R/P)$.

(iv) Suppose that $R$ has an artinian quotient ring $Q(R)$. By (\ref{cap}), the right $R$-module $R$ embeds in $\oplus_j R/I_j$. Denote this embedding by $\iota$, and observe that the irredundancy requirement of the intersection implies that, for each $j = 1, \ldots , n$, $\iota(\cap_{i \neq j}I_i \subseteq \iota (R) \cap (R/I_j) \neq \{0\}$. Hence, for each $j$, $P_j$ is a right associated prime of $R$. In particular, $P_j$ is a minimal prime ideal of $R$, for all $j = 1, \ldots , n$.  But, for every $j$, every minimal prime ideal $Q/I_j$ of $R/I_j$ is in the clique of $P_j/I_j$, since every such prime $Q/I_j$ occurs as the annihilator of a critical composition factor of the right module $R/I_j$, and the latter module embeds in a finite direct sum of copies of the $R/I_j$-injective hull of $R/P_j$, due to $R/I_j$ being right $P_j/I_j$-primary. \emph{A fortiori}, $Q$ is in the clique of $P_j$. Now the existence of $Q(R)$ implies that the set of minimal prime ideals of $R$ is a union of cliques, by \cite[Proposition 7.4.8]{Ja}. Since $P_j$ is a minimal prime of $R$, so is $Q$, as claimed.
\end{proof}

\textbf{Proof of Theorem \ref{affinesplit}:} Let $R$ be a finitely generated module over the central affine subalgebra $C$, and assume that $R$ is $C$-Macaulay. By Proposition \ref{prim}(i), $R$ admits an irredundant primary decomposition $0 = \bigcap_{j=1}^n I_j$. Since $C$ is affine, $\mathrm{Kdim}(C) = \mathrm{Kdim}(R) =: s < \infty$. For each $\ell = 0, \ldots , s$, define
$$ J_{\ell} := R \; \cap \; \bigcap\{ I_j : \mathrm{Kdim}(R/I_j) = \ell \}.$$
Relabelling the ideals $J_{\ell}$ to omit those which are equal to $R$, we obtain an irredundant decomposition
\begin{equation}\label{irred} 0 \; = \; \bigcap_{\ell} J_{\ell}. \end{equation}
By Theorem \ref{CMprops}(vi) $R$ has an artinian quotient ring, since it is $C$-Macaulay. Hence, by Proposition \ref{prim}(iv) every prime ideal minimal over $I_j$ is a minimal prime of $R$. By definition of the ideals $J_{\ell}$, the same is true for them.

We claim that
\begin{equation} \label{decomp} R \; \cong \; \bigoplus_{\ell} R/J_{\ell}.
\end{equation}
To prove (\ref{decomp}), it is enough to show that, for all $\ell$,
\begin{equation} \label{coprime} T_{\ell} \; := \; J_{\ell} + (\cap_{r \neq \ell}J_r) \; = \; R.
\end{equation}
Suppose that (\ref{coprime}) fails for $\ell$ and let $M$ be a maximal ideal of $R$ with $T_{\ell} \subseteq M.$ Thus $J_{\ell} \subseteq M$ and $J_r \subseteq M$ for some $r \neq \ell,$ so there exist prime ideals $P$ and $Q$ of $R$ with $P$ minimal over $J_{\ell}$ and $Q$ minimal over $J_r$, such that
$$ P + Q \subseteq M.$$
Note that, by the discussion between (\ref{irred}) and (\ref{decomp}), $P$ and $Q$ are both minimal primes of $R$; moreover $P$ is minimal over $I_j$ and $Q$ is minimal over $I_t$ for primary ideals $I_j$ [resp. $I_t$] occurring in the intersections defining $J_{\ell}$ [resp. $J_r$]. By construction, there are distinct integers $a$ and $b$ with
\begin{equation}\label{krull1} \mathrm{Kdim}(R/J_{\ell}) = a, \; \; \mathrm{Kdim}(R/J_{r}) = b. \end{equation}
By Proposition \ref{prim}(v), applied in $R/I_j$ and in $R/I_t$,
\begin{equation} \label{krull2} \mathrm{Kdim}(R/P) = a, \; \; \mathrm{Kdim}(R/Q) = b. \end{equation}
Since $R$ is $C$-Macaulay and $P$ is a minimal prime of $R$, $\mathrm{ht}(M) = \mathrm{ht}(M/P)$ by Theorem \ref{chains}; and since $R/P$ is a prime affine PI-ring, $\mathrm{Kdim}(R/P) = \mathrm{ht}(M/P)$ by \cite[Theorem 4]{Sch}. That is,
\begin{equation} \label{krull3} \mathrm{Kdim}(R/P) \; = \; \mathrm{ht}(M/P) \; = \; \mathrm{ht}(M);\end{equation}
similarly,
\begin{equation} \label{krull4} \mathrm{Kdim}(R/Q) \; = \; \mathrm{ht}(M/Q) \; = \; \mathrm{ht}(M).\end{equation}
Given that $a \neq b$,  (\ref{krull2}), (\ref{krull3}) and (\ref{krull4}) yield a contradiction. So (\ref{coprime}) must be true, and the direct sum decomposition (\ref{decomp}) is proved. The primary factors $R/I_j$ are Krull homogeneous by Proposition \ref{prim}(ii) and (iii), and hence so are the summands $R/J_{\ell}$, by construction. Finally, they are then also equicodimensional, in view of Proposition \ref{affequi}.  $\qquad \qquad \qquad \Box$

\subsection{Projectivity over regular subrings}\label{free} Recall the characterisation of  a commutative Cohen-Macaulay ring \cite[Corollary 18.17]{E}: an equicodimensional (commutative) noetherian ring is Cohen-Macaulay if and only if it is a projective module over some [resp. every] regular subring $C$ over which it is finitely generated. It is straightforward to extend this to the noncommutative setting, with $C$ central in $R$, but in fact we can do better than this, as in (ii) of the following result.

\begin{theorem} \label{free} Let $R$ be a noetherian ring which is a finitely generated $Z(R)$-module.

{\rm(i)} If there exists a regular subring $C$ of $Z(R)$ over which $R$ is a finitely generated projective module, then $R$ is $C$-Macaulay, and hence $Z(R)$-Macaulay.

{\rm(ii)} Suppose that $R$ is equicodimensional and $Z(R)$-Macaulay. Let $C$ be any (commutative) regular subring of $R$ with $R$ a finitely generated (right or left) $C$-module. Then $R$ is a projective $C$-module.
\end{theorem}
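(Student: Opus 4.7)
For part (i), if $C \subseteq Z(R)$ is regular and $R$ is finitely generated projective over $C$, then for any maximal ideal $\mathfrak{m}$ of $C$, $R_\mathfrak{m}$ is free over the regular local ring $C_\mathfrak{m}$. A system of parameters for $C_\mathfrak{m}$ is a regular sequence of length $\dim C_\mathfrak{m} = \mathrm{Kdim}(R_\mathfrak{m})$ which, by freeness, remains regular on $R_\mathfrak{m}$; this yields $G_{C_\mathfrak{m}}(\mathfrak{m}_\mathfrak{m}, R_\mathfrak{m}) \geq \dim C_\mathfrak{m}$, and combined with the reverse bound (\ref{firstoff}), gives the $C$-Macaulay property. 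That $R$ is also $Z(R)$-Macaulay follows because every $C$-sequence is automatically a $Z(R)$-sequence.

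For (ii), fix a maximal ideal $\mathfrak{n}$ of $C$; the plan is to prove that the $C_\mathfrak{n}$-module $R_\mathfrak{n} := R \otimes_C C_\mathfrak{n}$ is free over the regular local ring $C_\mathfrak{n}$, after which projectivity of $R$ over $C$ follows by a standard local-to-global argument. Since $C_\mathfrak{n}$ has finite global dimension, the Auslander-Buchsbaum formula reduces this to the equality $G_{C_\mathfrak{n}}(\mathfrak{n}_\mathfrak{n}, R_\mathfrak{n}) = \dim C_\mathfrak{n}$. The key device is the subring $B := Z(R)[C] \subseteq R$, which is commutative because $Z(R)$ centralises $C$, and which, as a $C$-submodule (resp.\ $Z(R)$-submodule) of the finitely generated $C$-module (resp.\ $Z(R)$-module) $R$, is itself finite over both $C$ and $Z(R)$; moreover $R$ is finite over $B$. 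Setting $n := \mathrm{Kdim}(R)$, equicodimensionality of $C$ and of $B$ at dimension $n$ follows from Lying Over and Incomparability for the integral extensions $C \subseteq R$ and $Z(R) \subseteq B$, combined with the hypothesis that $R$ is equicodimensional.

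The classical change-of-rings formula for grade under the finite extensions $Z(R) \subseteq B$ and $C \subseteq B$ of commutative noetherian rings then gives
\[
G_{Z(R)_\mathfrak{m}}(\mathfrak{m}_\mathfrak{m}, R_\mathfrak{m}) = \min_{\mathfrak{b} \cap Z(R) = \mathfrak{m}} G_{B_\mathfrak{b}}(\mathfrak{b}_\mathfrak{b}, R_\mathfrak{b}), \quad G_{C_\mathfrak{n}}(\mathfrak{n}_\mathfrak{n}, R_\mathfrak{n}) = \min_{\mathfrak{b} \cap C = \mathfrak{n}} G_{B_\mathfrak{b}}(\mathfrak{b}_\mathfrak{b}, R_\mathfrak{b}).
\]
By Theorem \ref{CMprops}(ii) and Lemma \ref{equi}, the left side of the first identity equals $n$ for every maximal $\mathfrak{m}$ of $Z(R)$, forcing $G_{B_\mathfrak{b}}(\mathfrak{b}_\mathfrak{b}, R_\mathfrak{b}) \geq n$ for every maximal $\mathfrak{b}$ of $B$; the reverse bound $\leq \dim B_\mathfrak{b} = n$ is automatic from equicodimensionality, so equality holds uniformly. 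The second identity then delivers $G_{C_\mathfrak{n}}(\mathfrak{n}_\mathfrak{n}, R_\mathfrak{n}) = n = \dim C_\mathfrak{n}$, completing the argument via Auslander-Buchsbaum. The main obstacle is the non-centrality of $C$, which rules out localising $R$ as a ring along $C \setminus \mathfrak{n}$: everything must be done module-theoretically, and the commutative intermediary $B$ is what translates the central $Z(R)$-Macaulay hypothesis into a grade statement over the possibly-non-central $C$.
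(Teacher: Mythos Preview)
Your proof is correct and follows essentially the same route as the paper's. Both arguments introduce the commutative intermediate ring $A = Z(R)[C]$ (your $B$), first lift the $Z(R)$-Macaulay hypothesis to show that $R$ is maximal Cohen--Macaulay over $A$ with $G_A(P,R) = n$ at every maximal ideal $P$ of $A$, then descend to $C$, and finish with Auslander--Buchsbaum. The one substantive difference is in the descent step: the paper argues it by hand (Step~2: starting from a maximal $C$-sequence of length $s$ in $\mathfrak{m}$, it locates an element annihilated by $\mathfrak{m}$ modulo $I$, hence a simple $A$-submodule of $R/I$, producing a maximal ideal $N$ of $A$ with $G_A(N,R)=s$; since Step~1 forces $G_A(N,R)=n$, one gets $s=n$). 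You instead package this as the standard identity $G_{C_{\mathfrak{n}}}(\mathfrak{n}_{\mathfrak{n}},R_{\mathfrak{n}}) = G_{B_{\mathfrak{n}}}(\mathfrak{n}_{\mathfrak{n}}B_{\mathfrak{n}},R_{\mathfrak{n}}) = \min_{\mathfrak{b}\mid\mathfrak{n}} G_{B_{\mathfrak{b}}}(\mathfrak{b}_{\mathfrak{b}},R_{\mathfrak{b}})$ for a finite extension $C\subseteq B$ of commutative noetherian rings acting on a finitely generated $B$-module. That identity is indeed valid (depth depends only on the radical, and $B_{\mathfrak{n}}$ is semilocal with Jacobson radical $\sqrt{\mathfrak{n}B_{\mathfrak{n}}}$), so your shortcut is legitimate; the paper's Step~2 is in effect a direct proof of that identity in the case at hand. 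It would strengthen your write-up to give a precise reference for the formula (e.g.\ via the local-cohomology characterisation of depth), and to be explicit that the $B$-module structure on $R$ used throughout is the right one, so that the two displayed identities refer to the same module.
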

\begin{proof} {\rm(i)} Suppose that $R$ is a finitely generated projective $C$-module, with $C$ central and regular. These properties are preserved by localisation at a maximal ideal of $C$, so, by Theorem \ref{CMprops}(iv), in proving that $R$ is $C$-Macaulay we may assume that $C$ is local. In this case $C$, being regular local, is Cohen-Macaulay, \cite[Corollary 10.15]{E}. Hence $R$, being a free $C$-module by \cite[Exercise 4.11a]{E}, is a Cohen-Macaulay $C$-module, as required. That $R$ is $Z(R)$-Macaulay now follows by the first paragraph of $\S$\ref{depend}.

{\rm(ii)} \textbf{Step 1: Proof of} (\ref{Amax}): Let $C$ and $R$ be as stated, with $R$ a finitely generated right $C$-module. Set $A$ to be the subring of $R$ generated by $C$ and $Z(R)$, so that $A$ is commutative noetherian and a finitely generated $C$-module, and $R$ is finitely generated as a right and as a left $A$-module. We show first that
\begin{equation}\label{Amax} A \textit{ is equicodimensional, and } R \textit{ is a maximal Cohen-Macaulay } A-\textit{module.} \end{equation}
Let $P$ be a maximal ideal of $A$, and set $\mathfrak{p} := P \cap Z(R)$, a maximal ideal of $Z(R)$ by Going Up, Proposition \ref{INCetc}(iii). The equicodimensionality hypothesis applies to both $R$ and $Z(R)$ by Lemma \ref{equi}. Since $R$ is $Z(R)$-Macaulay, Theorem \ref{CMprops}(ii) and equicodimensionality ensure that
\begin{equation} \label{crewe} G_{Z(R)}(\mathfrak{p}, R) = \mathrm{ht}(\mathfrak{p}) = \mathrm{Kdim}(Z(R)) = \mathrm{Kdim}(R).
\end{equation}
Since any $Z(R)$-sequence on $R$ is an $A$-sequence, and using \cite[Proposition 18.2]{E} for the second inequality,
\begin{equation}\label{stay} G_{Z(R)}(\mathfrak{p},R) \leq G_{A}(P,\,_{R}R) \leq \mathrm{ht}(P) \leq \mathrm{Kdim}_{A}(A).
\end{equation}
Now $R$ is an $R-A$-bimodule, finitely generated on each side, so
\begin{equation} \label{Krull} \mathrm{Kdim}_{A}(A) = \mathrm{Kdim}_{R}(R)
 \end{equation}
 by \cite[Corollary 6.4.13]{MR}. It therefore follows that equality holds throughout (\ref{crewe}) and (\ref{stay}). In particular,
\begin{equation}\label{triad} G_{A}(P, \,_{R}R) = \mathrm{ht}(P) = \mathrm{Kdim}(R).\end{equation}
Since $P$ was an arbitrary maximal ideal of $A$, (\ref{Amax}) is proved.

\textbf{Step 2: $R$ is a maximal Cohen-Macaulay (left) $C$-module:} Now let $\mathfrak{m}$ be a maximal ideal of $C$. Using Proposition \ref{INCetc}(i), let $P$ be a maximal ideal of $A$ with $P$ lying over $\mathfrak{m}$. By (\ref{Amax}) and Corollary \ref{heights}(i),(ii),
\begin{equation}\label{Astart} \textrm{Kdim}(A) = \mathrm{ht}(P) = \mathrm{ht}(\mathfrak{m}). \end{equation}
From (\ref{Astart}), (\ref{Krull}) and (\ref{triad}),
\begin{equation}\label{keyer}  s := G_C(\mathfrak{m},\,_{R}R) \leq G_A(P,\,_{R}R) = \textrm{Kdim}(R) = \textrm{Kdim}(A) = \mathrm{ht}(P) = \mathrm{ht}(\mathfrak{m}).
\end{equation}
Suppose for a contradiction that the inequality in (\ref{keyer}) is strict, and let $\{x_1,\ldots , x_s \}$ be a maximal $C$-sequence  on $\,_{R}R$ in $\mathfrak{m}$. Note that $\,_{C}R$ is also finitely generated, since $\,{C}A$ and $\,_{A}R$ are both finitely generated modules. Let $I := \sum_{i=1}^s  x_i R$, so that $R/I$ is a finitely generated left $C/I\cap C$-module by the previous sentence. Then $\mathfrak{m}/I \cap C$ consists of zero divisors on $R/I$, so by \cite[Proposition 3.4(ii)]{BHM} there exists $a \in R \setminus I$ with $\mathfrak{m}a \subseteq I$. Since $AI \subseteq I$, $M := Aa + I/I$ is a non-zero finitely generated $A$-submodule of $R/I$. Moreover $(\mathfrak{m}A)M = 0$ since $A$ is commutative, and since $A/\mathfrak{m}A$ is Artinian, so is the $A/\mathfrak{m}A$-module $M$. There are thus a simple left $A$-submodule $U$ of $M$ and a maximal ideal $N$ of $A$, with $\sum_{i=1}^s  A x_i \subseteq N$, such that $NU = 0.$ In particular, $N \setminus \sum_{i=1}^s A x_i$ consists of zero divisors on the left $A$-module $R/I$. That is, $\{x_1,\ldots , x_s \}$ is a maximal $A$-sequence on $R$ in $N$, so that, invoking (\ref{triad}) and (\ref{Krull}) for the second and third equalities,
$$ s = G_A(N,R) = \mathrm{ht}(N) = \mathrm{Kdim}(A).$$
However, this contradicts the hypothesis that the inequality in (\ref{keyer}) is strict. Hence equality holds in (\ref{keyer}). Since $\mathfrak{m}$ was an arbitrary maximal ideal of $C$, $R$ is a maximal Cohen-Macaulay left $C$-module.

\textbf{Step 3:} Let $\mathfrak{m}$ be a maximal ideal of $C$, and apply the Auslander-Buchsbaum formula \cite[Theorem 19.9]{E} to the regular local ring $C_{\mathfrak{m}}$ to get
\begin{equation}\label{AB} \mathrm{pr.dim}_{C_{\mathfrak{m}}}(C_{\mathfrak{m}} \otimes_C R) + G_{C_{\mathfrak{m}}}(\mathfrak{m}_{\mathfrak{m}}, C_{\mathfrak{m}} \otimes_C R) = G_{C_{\mathfrak{m}}}(\mathfrak{m}_{\mathfrak{m}}, C_{\mathfrak{m}}). \end{equation}
Now $G_C (\mathfrak{m},R) = \mathrm{ht}(\mathfrak{m})$ by Step 2. Thus, using also the preservation of $C$-sequences under localisation, \cite[Lemma 18.1]{E}, and the fact that $C$ is regular and hence Cohen-Macaulay,
$$ G_{C_{\mathfrak{m}}}(\mathfrak{m}_{\mathfrak{m}}, C_{\mathfrak{m}} \otimes_C R) = G_C (\mathfrak{m},R)= \mathrm{ht}(\mathfrak{m}) = G_C (\mathfrak{m},C) = G_{C_{\mathfrak{m}}}(\mathfrak{m}, C_{\mathfrak{m}}).$$
From this and (\ref{AB}) we deduce that $\mathrm{pr.dim}_{C_{\mathfrak{m}}}(C_{\mathfrak{m}} \otimes_C R)= 0.$ Since $\mathfrak{m}$ was an arbitrary maximal ideal of $C$, $R$ is a projective left $C$-module, as required.
\end{proof}

\begin{remarks}{\rm (i)} Theorem \ref{free}(ii) is false if the hypothesis that $R$ is equicodimensional is omitted. Consider the ring $R = k[X] \oplus k$ of Example \ref{baddy}, with the same subring $C = \{ (f(X), f(1)\} \cong k[X]$ as before. Clearly, $R$ is not a projective $C$-module.

{\rm(ii)} Theorem \ref{free}(ii) fails if $R$ is not a finitely generated $C$-module. For example, let $R$ be the coordinate ring of $SL(2, \mathbb{C})$, $R = \mathbb{C}[X,Y,Z,U]/\langle XU - YZ - 1 \rangle$. Let $C$ be the polynomial subalgebra $\mathbb{C}[X,Y]$ of $R$. Then $R$, being regular, is Cohen-Macaulay, but $R$ is not $C$-projective, since the maximal ideal $\mathfrak{m} = \langle X,Y \rangle$ of $C$ has $\mathfrak{m}R = R$.
\end{remarks}

\subsection{Azumaya and singular loci}\label{loci}
In this subsection we prove Theorem \ref{Az}, an improved version of \cite[Theorem 3.8]{BG}.
\begin{definition} Let $R$ be a noetherian ring which is a finite module over its centre $Z(R)$.

{\rm(i)} The \emph{Azumaya locus} of $R$ is
$$ \mathcal{A}_R = \{\mathfrak{m} \lhd_{\mathrm{max}} Z(R) : R_{\mathfrak{m}} \textit{ is Azumaya over } Z(R)_{\mathfrak{m}}\}.$$

{\rm(ii)} The \emph{smooth locus} of $Z(R)$ is
$$ \mathcal{F}_{Z(R)} = \{\mathfrak{m} \lhd_{\mathrm{max}} Z(R) : \mathrm{gl.dim}(Z(R)_{\mathfrak{m}}) < \infty \}. $$

\end{definition}

When $R$ as in the definition is in addition an affine $k$-algebra over an algebraically closed field $k$, then so is $Z(R)$ by the Artin-Tate lemma \cite[Lemma 13.9.10]{MR}, while if $R$ is prime then $Z(R)$ is a domain. In these circumstances $\mathcal{F}_{Z(R)}$ of course consists of the smooth points of the variety $\mathrm{Maxspec}(Z(R)),$ and both $\mathcal{A}_R$ and $\mathcal{F}_{Z(R)}$ are non-empty open subsets of $\mathrm{Maxspec}(Z(R))$. (For the case of $\mathcal{A}_{R}$, see for example \cite[Theorem III.1.7]{BG}.)

We have the following general relationship between the two sets defined above, following easily from the fact that, if $\mathfrak{m} \in \mathcal{A}_R$, then $R_{\mathfrak{m}}$ is a free $Z(R)_{\mathfrak{m}}$-module.

\begin{lemma} (\cite[Lemma 3.3]{BGhom}, \cite[Lemma III.1.8]{BG})\label{gotit} Let $R$ be a prime noetherian ring which is a finite module over its centre $Z(R)$. Suppose that $\mathrm{gl.dim.}(R) < \infty$. Then $\mathcal{A}_R \subseteq \mathcal{F}_{Z(R)}$.
\end{lemma}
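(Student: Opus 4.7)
Fix $\mathfrak{m} \in \mathcal{A}_R$; the goal is to show $\mathrm{gl.dim}(Z(R)_{\mathfrak{m}}) < \infty$, i.e.\ $\mathfrak{m} \in \mathcal{F}_{Z(R)}$. The plan has three short steps, and the main technical lever is faithfully flat base change.

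First, exploit the Azumaya hypothesis. Because $Z(R)_{\mathfrak{m}}$ is commutative local, the Azumaya $Z(R)_{\mathfrak{m}}$-algebra $R_{\mathfrak{m}}$ is automatically \emph{free} of finite rank over $Z(R)_{\mathfrak{m}}$ (any finitely generated projective module over a local ring is free); in particular $Z(R)_{\mathfrak{m}} \hookrightarrow R_{\mathfrak{m}}$ is faithfully flat. Second, localisation preserves finite global dimension, so $\mathrm{gl.dim}(R_{\mathfrak{m}}) \leq \mathrm{gl.dim}(R) < \infty$.

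Third, reduce the regularity question to a single Ext computation. By the Auslander--Buchsbaum--Serre theorem (\cite[Theorem 19.12]{E}), the commutative noetherian local ring $Z(R)_{\mathfrak{m}}$ has finite global dimension iff its residue field $k(\mathfrak{m}) := Z(R)_{\mathfrak{m}}/\mathfrak{m}_{\mathfrak{m}}$ has finite projective dimension over $Z(R)_{\mathfrak{m}}$. Apply flat base change along the faithfully flat extension $Z(R)_{\mathfrak{m}} \hookrightarrow R_{\mathfrak{m}}$ to the finitely presented module $k(\mathfrak{m})$:
\[
\mathrm{Ext}^i_{Z(R)_{\mathfrak{m}}}\bigl(k(\mathfrak{m}), k(\mathfrak{m})\bigr) \otimes_{Z(R)_{\mathfrak{m}}} R_{\mathfrak{m}}
\;\cong\;
\mathrm{Ext}^i_{R_{\mathfrak{m}}}\bigl(R_{\mathfrak{m}}/\mathfrak{m} R_{\mathfrak{m}},\, R_{\mathfrak{m}}/\mathfrak{m} R_{\mathfrak{m}}\bigr).
\]
The right-hand side vanishes for $i > \mathrm{gl.dim}(R_{\mathfrak{m}})$, and faithful flatness lets us conclude that the left-hand Ext also vanishes in those degrees. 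Hence $\mathrm{pd}_{Z(R)_{\mathfrak{m}}}(k(\mathfrak{m})) < \infty$, which delivers $\mathfrak{m} \in \mathcal{F}_{Z(R)}$.

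There is no serious obstacle: Step 1 is the standard local structure of Azumaya algebras, Step 2 is routine, and the only substantive ingredient is the flat base change isomorphism in Step 3, for which both inputs (faithful flatness of $R_{\mathfrak{m}}$ and finite presentation of $k(\mathfrak{m})$) are readily at hand. A stylistic variant would replace Step 3 by invoking Morita equivalence of an Azumaya algebra with its centre to transfer $\mathrm{gl.dim}$ directly, but the Ext-based approach above keeps the argument self-contained.
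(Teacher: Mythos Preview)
Your argument is correct and follows the line the paper indicates: the paper does not spell out a proof but remarks that the lemma ``follows easily from the fact that, if $\mathfrak{m} \in \mathcal{A}_R$, then $R_{\mathfrak{m}}$ is a free $Z(R)_{\mathfrak{m}}$-module'' and cites \cite{BGhom}, \cite{BG}; your Steps~1--3 make this explicit via faithfully flat base change for $\mathrm{Ext}$, which is exactly the sort of verification those references contain.

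One small correction to your closing remark: an Azumaya algebra is \emph{not} in general Morita equivalent to its centre (a nontrivial central division algebra is a counterexample). What is true is that $R_{\mathfrak{m}} \otimes_{Z(R)_{\mathfrak{m}}} R_{\mathfrak{m}}^{\mathrm{op}} \cong \mathrm{End}_{Z(R)_{\mathfrak{m}}}(R_{\mathfrak{m}})$ is Morita equivalent to $Z(R)_{\mathfrak{m}}$, and one can run the same style of argument through that algebra instead; but your Ext-based route avoids this detour and needs no adjustment.
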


 Of course, when $R = Z(R)$, $\mathcal{A}_R = \mathrm{Maxspec}(R)$, showing that the lemma's validity requires the finiteness of the global dimension of $R$. Note moreover that the inclusion given in the lemma is in general strict. Consider for example the enveloping algebra $U$ of the two-dimensional non-abelian Lie algebra over a field $k$ of positive characteristic $p$, $U = k \langle x,y :[y,x] = x \rangle$. This is a finite module over its centre $Z(U) = k \langle x^p, y^p - y \rangle$. By the PBW theorem $Z(U)$ is a polynomial algebra on the two given generators. Thus $\mathcal{F}_{Z(U)} = \mathrm{Maxspec}(Z(U))$. On the other hand, $xU$ is an ideal of $U$ with $U/xU \cong k[y]$, whence it follows easily that $\mathcal{A}_{Z(U)} = \mathrm{Maxspec}(Z(U)) \setminus \mathcal{V}(x^p)$.

As the following lemma and its corollary show, the problem for $U$ is that it has ``too many'' points which are not Azumaya: to be precise, the closed set of non-Azumaya primes of $Z(U)$ has codimension 1 in $\mathrm{maxspec}(Z(U))$, or - equivalently - there is a codimension one prime $\mathfrak{p}$ of $Z(U)$, namely $\langle x^p \rangle$, such that $U_{\mathfrak{p}}$ is not Azumaya. Thus we say that $R$ (a finite module over its noetherian centre $Z(R)$) is \emph{height 1 Azumaya} if the closed set $\mathrm{Maxspec}(Z(R)) \setminus \mathcal{A}_R$ has codimension at least 2.

\begin{lemma} \label{getout} Let $R$ be a prime noetherian ring, finitely generated and projective over
its centre $Z(R)$. If $R$ is height 1 Azumaya over $Z(R)$ then it is Azumaya over $Z(R)$.
\end{lemma}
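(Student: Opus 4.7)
The plan is to characterize $R$ being Azumaya over $Z := Z(R)$ via the standard criterion that the natural multiplication map
\[
\phi\colon R \otimes_Z R^{\mathrm{op}} \longrightarrow \mathrm{End}_Z(R)
\]
is an isomorphism, and then show that if $\phi$ fails to be an isomorphism at some maximal ideal, then there must already be a height one prime of $Z$ where $R$ is not Azumaya, contradicting the hypothesis.

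First, since $R$ is prime, $Z$ is a domain and $R \otimes_Z \mathrm{Frac}(Z)$ is a central simple $\mathrm{Frac}(Z)$-algebra of dimension $n^2$, where $n$ is the PI-degree of $R$. Because $R$ is finitely generated projective over the domain $Z$, it has constant rank, and this rank must equal $n^2$. Consequently, both $R \otimes_Z R^{\mathrm{op}}$ and $\mathrm{End}_Z(R)$ are finitely generated projective $Z$-modules of rank $n^4$. Tensoring $\phi$ with $\mathrm{Frac}(Z)$ gives the Azumaya isomorphism at the generic point, so $\ker\phi$ is a torsion submodule of a projective module over a domain and hence zero; thus $\phi$ is injective.

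Now localize at an arbitrary maximal ideal $\mathfrak{m}$ of $Z$. Both modules become free of rank $n^4$ over $Z_{\mathfrak{m}}$; pick bases and set $d := \det(\phi_{\mathfrak{m}}) \in Z_{\mathfrak{m}}$. Then $R_{\mathfrak{m}}$ is Azumaya over $Z_{\mathfrak{m}}$ if and only if $\phi_{\mathfrak{m}}$ is an isomorphism, if and only if $d$ is a unit in $Z_{\mathfrak{m}}$. Suppose for contradiction that some maximal $\mathfrak{m}$ lies outside $\mathcal{A}_R$. Then $d$ is a non-unit in $Z_{\mathfrak{m}}$, and $d \neq 0$ because $\phi$ is injective. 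By Krull's Hauptidealsatz there exists a height one prime $\widetilde{\mathfrak{q}}$ of $Z_{\mathfrak{m}}$ containing $d$, which corresponds to a height one prime $\mathfrak{q}$ of $Z$ with $\mathfrak{q} \subseteq \mathfrak{m}$. Localizing further, $\phi_{\mathfrak{q}}$ is the localization of $\phi_{\mathfrak{m}}$ at $\widetilde{\mathfrak{q}}$, so its determinant (with respect to the localized bases) is the image of $d$, which is still a non-unit in $Z_{\mathfrak{q}}$. Hence $\phi_{\mathfrak{q}}$ fails to be an isomorphism, so $R_{\mathfrak{q}}$ is not Azumaya, contradicting the height one Azumaya hypothesis (which forces every height one prime of $Z$ to lie in $\mathcal{A}_R$).

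The main obstacle is that a globally defined determinant of $\phi$ only lives in a line bundle, not in $Z$ itself. I sidestep this by working locally at each maximal ideal where both modules trivialize; since Azumaya is a local property, this is enough. A secondary, but routine, point is to confirm that the hypothesis ``$\mathrm{Maxspec}(Z) \setminus \mathcal{A}_R$ has codimension $\geq 2$'' translates into the statement that every height one prime of $Z$ lies in the Azumaya locus, which uses only that Azumaya is preserved under further localization, so a non-Azumaya height one prime is always dominated by a non-Azumaya maximal ideal whose closure would contribute a codimension one component to the non-Azumaya locus.
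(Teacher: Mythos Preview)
Your argument is correct. The paper does not give its own proof of this lemma but simply cites \cite[Lemma 3.6]{BGhom}, so there is no in-paper proof to compare against line by line. That said, your route via the multiplication map $\phi\colon R\otimes_Z R^{\mathrm{op}}\to \mathrm{End}_Z(R)$ is exactly the standard one: the non-Azumaya locus is, after localizing so that both sides are free of the same rank, the vanishing locus of a single nonzero element $d=\det(\phi_{\mathfrak m})$, and Krull's Hauptidealsatz then forces any non-Azumaya maximal ideal to dominate a non-Azumaya height one prime. All the supporting steps you use (constant rank $n^2$ because $Z$ is a domain and the generic fibre is central simple, injectivity of $\phi$ from torsion-freeness of $R\otimes_Z R^{\mathrm{op}}$, and preservation of injectivity under the flat localization so that $d\neq 0$) are sound. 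Your final remark translating ``codimension $\ge 2$ complement of $\mathcal{A}_R$'' into ``$R_{\mathfrak q}$ Azumaya for every height one prime $\mathfrak q$'' is also correct and matches the paper's own informal gloss just before the lemma.
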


For the proof of Lemma \ref{getout}, see \cite[Lemma 3.6]{BGhom}.

\begin{corollary} \label{opp} Let $R$ be a prime noetherian ring which is a finite module over
its centre Z(R). Suppose that $R$ is $Z(R)$-Macaulay. If $R$ is height 1 Azumaya over $Z(R)$, then
$\mathcal{F}_{Z(R)} \subseteq \mathcal{A}_R$.
\end{corollary}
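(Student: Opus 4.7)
The plan is to fix $\mathfrak{m}\in\mathcal{F}_{Z(R)}$, localize at $\mathfrak{m}$, and reduce the problem to a direct application of Lemma \ref{getout}. Since $Z(R_{\mathfrak{m}})=Z(R)_{\mathfrak{m}}$, Theorem \ref{CMprops}(iii) shows that $R_{\mathfrak{m}}$ inherits the $Z(R)_{\mathfrak{m}}$-Macaulay property, while the hypothesis $\mathfrak{m}\in\mathcal{F}_{Z(R)}$ makes $Z(R)_{\mathfrak{m}}$ a regular local ring (via the Auslander--Buchsbaum--Serre characterization of regularity). Replacing $(R,Z(R))$ by $(R_{\mathfrak{m}},Z(R)_{\mathfrak{m}})$ I may therefore assume that $Z(R)$ is regular local, and the goal reduces to showing that $R$ itself is Azumaya over $Z(R)$.

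With this reduction in hand, the next step is to produce projectivity of $R$ over $Z(R)$. A regular local ring is Cohen--Macaulay, and $R$ is a maximal Cohen--Macaulay $Z(R)$-module by hypothesis, so the Auslander--Buchsbaum formula forces $\mathrm{pr.dim}_{Z(R)}(R)=0$ and $R$ is free over $Z(R)$. (Alternatively, one could appeal directly to Theorem \ref{free}(ii), since the locality of $Z(R)$ together with Lemma \ref{equi}(ii) supplies the required equicodimensionality of $R$.)

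What remains is to apply Lemma \ref{getout}, for which I need $R$ (meaning now $R_{\mathfrak{m}}$) to be height 1 Azumaya over its centre. I would argue this by tracking the non-Azumaya locus: since being Azumaya is an open condition, the non-Azumaya primes of $Z(R)$ form a closed subset of $\mathrm{Spec}(Z(R))$, and the original global ``codimension $\geq 2$ in $\mathrm{Maxspec}$'' hypothesis translates---via the catenarity of $Z(R)$ supplied by Theorem \ref{chains}(ii)---into the statement that every minimal prime of this closed set has height at least two in $\mathrm{Spec}(Z(R))$. Localization at $\mathfrak{m}$ preserves these heights, so no height one prime of the localized centre lies in the non-Azumaya locus of $R_{\mathfrak{m}}$, and Lemma \ref{getout} then delivers $R_{\mathfrak{m}}$ Azumaya over $Z(R)_{\mathfrak{m}}$, i.e.\ $\mathfrak{m}\in\mathcal{A}_R$. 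The main obstacle I anticipate is precisely this descent step: the paper's definition of ``height 1 Azumaya'' is phrased in $\mathrm{Maxspec}$-codimensional terms rather than height-theoretic ones, so one must invoke catenarity to convert it into a statement that visibly survives localization.
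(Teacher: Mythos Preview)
Your proposal is correct and follows essentially the same route as the paper: localize at $\mathfrak{m}\in\mathcal{F}_{Z(R)}$, use the $Z(R)$-Macaulay hypothesis together with Auslander--Buchsbaum to get projectivity of $R_{\mathfrak{m}}$ over $Z(R)_{\mathfrak{m}}$, and then apply Lemma~\ref{getout}. The paper simply glosses over the preservation of the height~1 Azumaya condition under localization that you worry about; your appeal to catenarity for this step is harmless but unnecessary, since the height of a prime in any noetherian ring is preserved under localization at a larger prime, so the condition ``$R_{\mathfrak{p}}$ is Azumaya for every height one prime $\mathfrak{p}$'' passes to $R_{\mathfrak{m}}$ directly.
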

\begin{proof} Let $\mathfrak{m} \in  \mathcal{F}_{Z(R)}$. Since $R$ is $Z(R)$-Macaulay, $R_{\mathfrak{m}}$ is $Z(R)_{\mathfrak{m}}$-Macaulay by Theorem \ref{CMprops}(iii). Since  $\mathfrak{m} \in  \mathcal{F}_{Z(R)}$, $$\mathrm{pr.dim.}_{Z(R)_{\mathfrak{m}}}(R_{\mathfrak{m}}) < \infty,$$
and so we can apply the  Auslander-Buchsbaum formula \cite[Theorem
19.9]{E} to deduce that $R_{\mathfrak{m}}$  is a projective $Z(R)_{\mathfrak{m}}$-module. Now Lemma \ref{getout} shows that $R_{\mathfrak{m}}$ is Azumaya over $Z(R)_{\mathfrak{m}}$, as required.
\end{proof}

From Lemma \ref{gotit} and Corollary \ref{opp} we deduce:

\begin{theorem}\label{Az} Let $R$ be a prime noetherian ring which is a finite module over its centre $Z(R)$. Consider the following statements:

{\rm(i)} $\mathrm{gl.dim}(R) < \infty$;

{\rm(ii)} $R$ is $Z(R)$-Macaulay;

{\rm(iii)} $R$ is height 1 Azumaya over $Z(R)$.

If (i) holds, then $\mathcal{A}_R \subseteq \mathcal{F}_{Z(R)}$, while if (ii) and (iii) hold then $\mathcal{F}_{Z(R)}  \subseteq \mathcal{A}_R$. Hence if (i),(ii) and (iii) hold,
$$\mathcal{A}_R = \mathcal{F}_{Z(R)}.$$
\end{theorem}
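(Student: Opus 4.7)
The plan is essentially to combine two results already established immediately before the theorem. The statement has been deliberately decomposed so that each of its three implications corresponds to a lemma or corollary already in hand.

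First, for the implication ``(i) $\Rightarrow \mathcal{A}_R \subseteq \mathcal{F}_{Z(R)}$'', I would simply invoke Lemma \ref{gotit}, which under the sole hypothesis that $R$ is prime noetherian, a finite module over $Z(R)$, and of finite global dimension, already delivers exactly this containment. The argument there passes through the fact that Azumaya over a local base forces freeness, so that finite global dimension of $R$ at $\mathfrak{m}$ transfers to finite global dimension of $Z(R)_{\mathfrak{m}}$ (via an Auslander--Buchsbaum style argument), showing $\mathfrak{m}\in \mathcal{F}_{Z(R)}$.

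Second, for ``(ii)+(iii) $\Rightarrow \mathcal{F}_{Z(R)} \subseteq \mathcal{A}_R$'', I would quote Corollary \ref{opp}, whose proof proceeds by picking $\mathfrak{m}\in \mathcal{F}_{Z(R)}$, localising to use Theorem \ref{CMprops}(iii) to keep the Macaulay hypothesis, applying Auslander--Buchsbaum at the regular local ring $Z(R)_{\mathfrak{m}}$ to upgrade the Macaulay module $R_{\mathfrak{m}}$ to a projective $Z(R)_{\mathfrak{m}}$-module, and then invoking the height $1$ Azumaya assumption together with Lemma \ref{getout} to promote height $1$ Azumaya plus projectivity to Azumaya.

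Finally, combining these two containments in the presence of all three hypotheses gives equality $\mathcal{A}_R = \mathcal{F}_{Z(R)}$. There is no real obstacle here since all the genuine work has been done in Lemmas \ref{gotit}, \ref{getout} and Corollary \ref{opp}; the only ``step'' of the proof is to observe that the hypothesis (i) is precisely what Lemma \ref{gotit} needs, the hypotheses (ii) and (iii) are precisely what Corollary \ref{opp} needs, and these two inclusions are mutually inverse. Thus the proof reduces to a one-line citation of the two preceding results.
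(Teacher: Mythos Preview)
Your proposal is correct and matches the paper's own proof exactly: the paper simply writes ``From Lemma \ref{gotit} and Corollary \ref{opp} we deduce'' and then states the theorem, so the result is indeed just the combination of those two prior results as you describe.
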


Theorem \ref{Az} improves \cite[Theorem 3.8]{BGhom}, since the latter requires the additional hypothesis that $R$ is Auslander-regular. Examples showing that this is a genuine improvement are given in $\S$\ref{recon}.

The discussion after Lemma \ref{gotit} shows that hypotheses (i) and (iii) are necessary in Theorem \ref{Az}. As regards hypothesis (ii), let $k$ be a field, $Z = k[X,Y]$, $\mathfrak{m} = \langle X,Y \rangle$, and set
$$   R   =  \begin{pmatrix} Z & \mathfrak{m} \\ Z & Z \end{pmatrix}.$$
Thus, being a finite module over its centre $Z$, $R$ is clearly noetherian, and it is easily seen to be prime. Being the idealiser of a maximal right ideal of $M_2(Z)$, $R$ has global dimension 2 by \cite[Corollary 7.5.12]{MR}. If $\mathfrak{p}$ is a codimension 1 prime of $Z$, then $\mathfrak{m} \cap (Z \setminus \mathfrak{p}) \neq \emptyset$, so that $R_{\mathfrak{p}} \cong M_2(Z_{\mathfrak{p}})$. Hence, $R$ is Azumaya in codimension 1. However, the conclusion of Theorem \ref{Az} is false for $R$, since $$ \mathcal{A}_R = \mathrm{Maxspec}(Z) \setminus \mathfrak{m} \qquad \subsetneq \qquad \mathrm{Maxspec}(Z) = \mathcal{F}_{Z(R)}.$$
It follows - as can easily be confirmed directly - that hypothesis (ii) fails for $R$.

\subsection{Reconstruction algebras: the $C$-Macaulay property} \label{recon} Wemyss \cite{W} introduced \emph{reconstruction algebras} in his work extending the concept of a noncommutative crepant resolution beyond its original application to Gorenstein surface singularities. In brief, let $r \geq 1$, let $\varepsilon$ be a primitive $r$th root of 1 in $\mathbb{C}$, let $a$ be a positive integer less than $r$ with $\mathrm{gcd}(r,a) = 1,$ and let $G = \langle \mathrm{diag}(\varepsilon, \varepsilon^a)\rangle$, a cyclic subgroup of order $r$ in $GL(2,\mathbb{C}).$ Thus $G$ acts linearly on $S := \mathbb{C}[X,Y]$, and we seek a (noncommutative) resolution of the invariant ring $Z :=  S^G.$ As
$\mathbb{C}G$-modules, $S = S_0 \oplus \cdots \oplus S_{r-1}$, the sum of the homogeneous components corresponding to the simple $\mathbb{C}G$-modules. By \cite[Corollary 10.10]{Yo}, the summands $S_i$ are precisely the indecomposable maximal Cohen-Macaulay $Z$-modules. One takes $M$ to be the sum of those $S_i$ which are \emph{special} in the sense of Wunram \cite{Wu}; the definition is recalled also in \cite[5.2.11]{W}. Then one sets the reconstruction algebra corresponding to the above data to be
\begin{equation}\label{construct} A_{r,a} \qquad := \qquad \mathrm{End}_{Z}(M).\end{equation}

\begin{theorem} \label{hardup} $A_{r,a}$ is $Z$-Macaulay.
\end{theorem}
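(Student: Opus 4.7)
The plan is to verify directly that $A_{r,a}$ is a maximal Cohen-Macaulay $Z$-module, which by Definition \ref{centMac}(iv) is exactly the assertion that $A_{r,a}$ is $Z$-Macaulay; that $A_{r,a}$ is a finitely generated $Z$-module, with $Z$ central, is immediate from its definition as $\mathrm{End}_Z(M)$ with $M$ a finitely generated $Z$-module.

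First, I would record the structure of $Z = S^G$. Because $G$ is a finite subgroup of $GL(2,\mathbb{C})$ acting linearly on $S = \mathbb{C}[X,Y]$, the invariant ring $Z$ is an affine $2$-dimensional normal Cohen-Macaulay $\mathbb{C}$-domain: normality descends from $S$, and the Cohen-Macaulay property follows from Hochster-Eagon. Over any $2$-dimensional normal noetherian domain, a finitely generated module is maximal Cohen-Macaulay if and only if it is reflexive: MCM forces $(S_2)$, which in this setting coincides with reflexivity, and conversely any reflexive module is torsion-free of depth equal to $\dim Z = 2$ at each maximal ideal.

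Second, by the cited \cite[Corollary 10.10]{Yo} the summands $S_i$ are precisely the indecomposable MCM $Z$-modules, so their direct sum $M$ is MCM and hence reflexive. I would then invoke the standard fact that, over a normal domain $Z$, the module $\mathrm{Hom}_Z(N_1,N_2)$ is reflexive whenever $N_1$ is finitely generated and $N_2$ is reflexive: a presentation $Z^{a} \to Z^{b} \to N_1 \to 0$ realises $\mathrm{Hom}_Z(N_1,N_2)$ as the kernel of a $Z$-linear map between reflexive modules, and a kernel of a morphism of reflexives is reflexive. Applied with $N_1 = N_2 = M$, this shows that $A_{r,a} = \mathrm{End}_Z(M)$ is a reflexive $Z$-module, hence MCM by the equivalence from the first paragraph, and therefore $Z$-Macaulay.

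The main obstacle is the reflexivity of $\mathrm{End}_Z(M)$; everything else is a structural consequence of working over a $2$-dimensional normal Cohen-Macaulay affine domain. An alternative route, bypassing the reflexivity step, would be to use Noether normalization to produce a graded polynomial subalgebra $C \subseteq Z$ over which $Z$ is finite and (by \cite[Exercise~4.11a]{E}) free, deduce that $M$ is free over $C$ because each $S_i$ is MCM over $Z$ and hence over $C$, and then argue that $A_{r,a}$ is MCM (equivalently projective) over $C$, whence Theorem \ref{free}(i) applies; but this route still requires the reflexive/endomorphism argument to transport the MCM property from $M$ to $\mathrm{End}_Z(M)$, so it does not really avoid the main difficulty.
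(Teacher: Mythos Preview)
Your argument is correct and complete. The key observation---that over a two-dimensional normal Cohen--Macaulay domain, maximal Cohen--Macaulay coincides with reflexive, and that $\mathrm{Hom}_Z(N_1,N_2)$ is reflexive whenever $N_2$ is---is standard and your justification via the depth lemma applied to the presentation sequence is sound.

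Your route is genuinely different from the paper's. The paper argues ring-theoretically: it identifies $\mathrm{End}_Z(S)$ with the skew group algebra $S\ast G$ (using that the latter is a maximal order), observes that $S\ast G$ is $Z$-Macaulay, and then passes to the corner ring $A_{r,a}=e(S\ast G)e$ for a suitable idempotent $e$, checking that corner rings of prime $Z$-Macaulay rings remain $Z$-Macaulay. Your approach is purely module-theoretic over $Z$ and exploits the accident of dimension two, where the MCM condition collapses to reflexivity; it avoids any appeal to maximal orders or to the skew group algebra description. The paper's argument, by contrast, does not use the dimension-two reflexivity trick and would adapt more readily to analogous constructions in higher Krull dimension, at the cost of invoking heavier noncommutative machinery. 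For the statement at hand, your proof is shorter and more self-contained.
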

\begin{proof}(Sketch) \textbf{Step 1:} \emph{The skew group algebra $\mathbb{C}[X,Y]\ast G$ is a maximal order.} This is a consequence of a general result hinging on the absence of pseudoreflections in the action of $G$ on $\mathbb{C}X \oplus \mathbb{C}Y$, \cite[Theorem 4.6]{M}.

\textbf{Step 2:}\; $ \mathrm{End}_Z(\mathbb{C}[X,Y]) \cong \mathbb{C}[X,Y]\ast G.$ Given Step 1, this is well-known, with details to be found in the proof of \cite[Theorem 1.5]{EG}, for example. First, note that the skew group algebra embeds in the endomorphism algebra, the element $\sum_{g \in G} s_g g$ mapping $t \in \mathbb{C}[X,Y]$ to $\sum_{g \in G}s_g g(t).$ To see that this embedding gives every endomorphism of $\mathbb{C}[X,Y]$, one checks that the isomorphism is valid after passing to quotient rings, that is after tensoring with $Q(Z)$, and then uses Step 1.

\textbf{Step 3:} \emph{Let $T$ be a prime $Z$-Macaulay ring and $e \in T$ a non-zero idempotent. Then $eTe$ is $eZe$-Macaulay, (with $eZe \cong Z$).} This is a routine check.

\textbf{Step 4:} The theorem follows from Steps 2 and 3, and the definition of $A_{r,a}$, letting $e$ be an idempotent in $T:= \mathbb{C}\ast G$ projecting from $\mathbb{C}[X,Y]$ onto $M$.
\end{proof}

\subsection{Reconstruction algebras: Azumaya locus}\label{reconAz}
\begin{theorem} With the notation of $\S$\ref{recon}, $A_{r,a}$ satisfies the hypotheses of Theorem \ref{Az}, with $Z(A_{r,a}) = Z$. In particular,
$$ \mathcal{A}_{A_{r,a}} = \mathcal{F}_{Z(A_{r,a})}.$$
\end{theorem}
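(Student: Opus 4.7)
The plan is to verify that $A_{r,a}$ satisfies $Z(A_{r,a}) = Z$ together with the three hypotheses of Theorem \ref{Az}, and then invoke that theorem. Hypothesis (ii) is already Theorem \ref{hardup}, and hypothesis (i), $\mathrm{gl.dim}(A_{r,a}) < \infty$, is the defining output of the reconstruction algebra construction in \cite{W}, so these do not require further work.

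I would handle primeness and the identification of the centre together by passing to the generic stalk. Each Wunram-special summand $S_i$ of $M$ has rank $1$ as a $Z$-module (the $S_i$ are the isotypic components of a $G$-Galois extension with $G$ abelian), so $M \otimes_Z \mathrm{Frac}(Z) \cong \mathrm{Frac}(Z)^n$ and $A_{r,a} \otimes_Z \mathrm{Frac}(Z) \cong M_n(\mathrm{Frac}(Z))$, where $n$ is the number of special summands. Since $A_{r,a}$ is a torsion-free $Z$-module it embeds in this simple matrix ring and is therefore prime; moreover $Z(A_{r,a}) \subseteq \mathrm{Frac}(Z)$, it is a finite $Z$-module, and $Z$ is normal, so $Z(A_{r,a}) = Z$. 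The inclusion $Z \subseteq Z(A_{r,a})$ can also be read off directly from the fact that $Z = S_0$ is a special summand of $M$, so acts by multiplication.

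The main step is hypothesis (iii), that $A_{r,a}$ is height 1 Azumaya over $Z$. Because $\gcd(r,a)=1$, no non-identity element of $G$ has eigenvalue $1$, so $G$ acts on $\mathbb{C}^2$ without pseudoreflections; equivalently, $G$ acts freely on $\mathrm{Spec}(S) \setminus \{0\}$, and consequently $Z = S^G$ is regular outside the image of the origin, a single point of codimension $2$ in the surface $\mathrm{Spec}(Z)$. To conclude (iii) it therefore suffices to show $\mathcal{F}_Z \subseteq \mathcal{A}_{A_{r,a}}$. Fix $\mathfrak{m} \in \mathcal{F}_Z$; then $Z_{\mathfrak{m}}$ is a $2$-dimensional regular local ring. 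Each special summand $S_i$ is MCM of rank $1$ over the normal surface ring $Z$, hence $Z$-reflexive, hence locally free at the regular point $\mathfrak{m}$. Therefore $M_{\mathfrak{m}}$ is a non-zero free $Z_{\mathfrak{m}}$-module, i.e., a progenerator, and so $(A_{r,a})_{\mathfrak{m}} = \mathrm{End}_{Z_{\mathfrak{m}}}(M_{\mathfrak{m}})$ is Morita equivalent to $Z_{\mathfrak{m}}$, in particular Azumaya, placing $\mathfrak{m}$ in $\mathcal{A}_{A_{r,a}}$.

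With hypotheses (i)--(iii) of Theorem \ref{Az} verified and $Z(A_{r,a}) = Z$ in hand, that theorem immediately yields $\mathcal{A}_{A_{r,a}} = \mathcal{F}_Z$. I expect the only potentially delicate point to be the assertion that each Wunram-special summand is $Z$-reflexive and hence free at smooth points of $Z$; but this is a standard fact for MCM modules over a $2$-dimensional normal ring, so even that step should be routine.
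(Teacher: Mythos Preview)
Your proposal is correct and follows essentially the same plan as the paper: verify primeness by embedding $A_{r,a}$ in $\mathrm{End}_{Q(Z)}(Q(Z)\otimes_Z M)$, cite \cite{W} for finite global dimension, cite Theorem \ref{hardup} for the Macaulay property, and check height 1 Azumaya by showing $M_{\mathfrak{p}}$ is free at the relevant primes.

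Two small differences are worth noting. First, you actually supply the argument for $Z(A_{r,a}) = Z$ (normality of $Z$ plus integrality of $Z(A_{r,a})$ inside $\mathrm{Frac}(Z)$), which the paper asserts but does not spell out. Second, for hypothesis (iii) the paper is slightly more direct: rather than describing the singular locus of $Z$ geometrically and then proving $\mathcal{F}_Z \subseteq \mathcal{A}_{A_{r,a}}$, it simply observes that $Z$, being an invariant ring, is normal, so every height 1 localisation $Z_{\mathfrak{p}}$ is a DVR and the torsion-free finitely generated module $M_{\mathfrak{p}}$ is automatically free, giving $(A_{r,a})_{\mathfrak{p}} \cong M_m(Z_{\mathfrak{p}})$. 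This avoids the pseudoreflection and reflexivity discussion entirely, though your route is equally valid and in fact establishes $\mathcal{F}_Z \subseteq \mathcal{A}_{A_{r,a}}$ directly, after which only Lemma \ref{gotit} is needed for the reverse inclusion.
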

\begin{proof} From the definition (\ref{construct}), $A_{r,a}$ is a factor algebra of $\mathrm{End}_Z(F)$ for a suitable finite rank free $Z$-module $F$, so that $A_{r,a}$ is a finite module over its central subalgebra $Z$. Since $M$ is $Z$-torsion free, so is $A_{r,a}$, so that we can embed $A_{r,a}$ in its quotient ring $Q(Z) \otimes_Z \mathrm{End}_Z(M) \cong \mathrm{End}_{Q(Z)}(Q(Z) \otimes M)).$ Since the latter algebra is prime, so is $A_{r,a}$.

The global dimension of $A_{r,a}$ is finite by \cite[Theorem 6.18]{W}, so (i) of Theorem \ref{Az} holds for $A_{r,a}$. Hypothesis (ii) of Theorem \ref{Az} is guaranteed by Theorem \ref{hardup}. Finally, let $\mathfrak{p}$ be a codimension one prime of $Z$, and let $m = \mathrm{dim}_{Q(Z)}(Q(Z) \otimes M).$ Since $Z$ is by definition an invariant ring, it is integrally closed \cite[Proposition 6.4.1]{BH}. Thus $Z_{\mathfrak{p}}$ is a DVR, and so
$$( A_{r,a})_{\mathfrak{p}} = \mathrm{End}_Z (M) \otimes_Z Z_{\mathfrak{p}} \cong  \mathrm{End}_{Z_{\mathfrak{p}}}(M_{\mathfrak{p}}) \cong M_m(Z_{\mathfrak{p}}). $$
Therefore $A_{r,a}$ is Azumaya in codimension 1, giving hypothesis (iii) of Theorem \ref{recon}.
\end{proof}

\section{Grade symmetry}\label{jsymm}
The purpose of this section is to define grade symmetric $C$-Macaulay rings, and describe their basic properties. We propose that grade symmetry gives the correct strengthening of the $C$-Macaulay condition, so as to retrieve for rings finite over their centres the familiar hierarchy of commutative homological properties - Cohen-Macaulay implies Gorenstein implies regular. The noncommutative version of the hierarchy is established in $\S$\ref{homhier}.
\subsection{Homological grade} \label{homdef}
\begin{definition} \label{symdef}{\rm(i)} Let $R$ be a ring and let $M$ be a non-zero right $R$-module. The \emph{right homological grade} of $M$ is
$$ j_R^r(M) := \mathrm{min}\{i: \mathrm{Ext}_R^i(M, R_R) \neq 0 \}, $$
or $j_R^r(M) = \infty$ if no such $i$ exists. The left homological grade of a left $R$-module $N$, denoted $j_R^{\ell}(N)$, is defined similarly. The suffix and superfix decorating $j$ will be omitted whenever possible.

{\rm(ii)} An $R$-bimodule $M$ is \emph{central} if $zm = mz$ for all $m \in M$ and $z \in Z(R).$

{\rm(iii)} $R$ is \emph{grade symmetric} if $j_R^{\ell}(M) = j_R^{r}(M)$ for all central $R$-bimodules $M$.
\end{definition}

Grade symmetry was first defined and studied in \cite{ASZ}, mainly in the context of Auslander-Gorenstein rings, and without the restriction to \emph{central} bimodules imposed above.

The concept of an \emph{exact dimension function} $\delta_{R}$ on the modules of a ring $R$ can be found in, for example \cite[6.8.4]{MR}, or \cite{Lev}. A dimension function $\delta_R$, defined on both left and right $R$-modules, is \emph{symmetric} if, for every central $R$-bimodule $M$, finitely generated on both sides, $\delta_R (M)$ takes the same value whether $M$ is viewed as a left module or as a right module.

\begin{definition} \label{deltamac} Let $\delta$ be an exact dimension function for the noetherian ring $R$.

{\rm (i)} $R$ is $\delta$-\emph{Macaulay} if
\begin{equation}\label{delmac} \delta (R) = \delta (M) + j_R(M) \end{equation}
for all finitely generated right or left $R$-modules $M$.

{\rm (ii)} If $R$ is finite over its centre $Z(R)$ and, for every maximal ideal $\mathfrak{m}$ of $Z(R)$,
\begin{equation} \label{delmacloc} \delta (R_{\mathfrak{m}}) = \delta (M) + j_{R_{\mathfrak{m}}}(M) \end{equation}
for all finitely generated right or left $R_{\mathfrak{m}}$-modules $M$, then we say that $R$ is \emph{locally $\delta$-Macaulay}.
\end{definition}

Of course it's implicit in (ii) of the definition that $\delta$ is also defined for $R_{\mathfrak{m}}$-modules, for all maximal ideals $\mathfrak{m}$ of $Z(R)$.

\begin{rexamples}\label{graderex}{\rm(i)} It is trivial but nonetheless important to observe that if a ring $R$ is $\delta$-Macaulay for a symmetric dimension function $\delta$, then it is grade symmetric.

{\rm(ii)} Two standard choices for $\delta$ are the \emph{Krull dimension}, which we denote by $\mathrm{Kdim}_R$, and the \emph{Gelfand-Kirillov dimension}, $\mathrm{GKdim}_R$; for details see, for example, \cite[Chapters 6 and 8]{MR}, \cite{KL}. Note that, while $\mathrm{Kdim}_R$ is defined for every noetherian ring $R$, it is not known whether it is always symmetric; $\mathrm{Kdim}_R$ is, however, easily seen to be symmetric when $R$ is finite over a central subring, \cite[Corollary 6.4.13]{MR}. On the other hand, $\mathrm{GKdim}_R$ is not defined for every $k$-algebra $R$, but, when it is defined, it is always symmetric, \cite[Corollary 5.4]{KL}. For an affine $k$-algebra $R$ which is finite over a central subring, $\mathrm{Kdim}_R$ coincides with $\mathrm{GKdim}_R$; this is an easy consequence of the equality of the two dimensions for affine commutative algebras, \cite[Theorem 4.5]{KL}.

{\rm(iii)} If (\ref{delmac}) holds for $R$ when $\delta$ is either of the two cases featuring in \ref{graderex}(ii), we say that $R$ is \emph{Krull Macaulay} or \emph{GK-Macaulay} respectively. By \cite[Corollary 2.1.4]{BH},
\begin{equation}\label{comm} \textit{commutative local Cohen-Macaulay rings are Krull-Macaulay.}
\end{equation}
The converse of (\ref{comm}) is also true, and well-known, though we have not been able to locate a reference. Both directions are special cases of Theorem \ref{opus}(i)$\Leftrightarrow$(iv) below.

{\rm(iv)} As discussed in \cite[$\S$4.5]{Lev}, one can reverse the order of development of Definition \ref{deltamac}, and ask for conditions under which, for a given ring $R$, there exists a non-negative integer $n$ such that $\delta_R := n - j_R$ defines an exact dimension function for $R$. For example, this is possible when $R$ is an Auslander-Gorenstein ring by \cite[Proposition 4.5]{Lev}.

{\rm(v)} Let $k$ be a field of characteristic 0, let $n$ be a positive integer, and let $R := A_n (k)$ be the $n$th Weyl algebra over $k$. Then $\mathrm{GKdim}(R) = 2n$ and $R$ is GK-Macaulay by \cite[Chapter 2, $\S$7.1]{Bj}. But $R$ is \emph{not} Krull Macaulay when $n > 1$: by a result of Stafford \cite{S}, $R$ has a principal maximal left ideal $I$, so, using \cite[Theorem 6.6.15]{MR}, we easily calculate that
$$ \mathrm{Kdim}_R(R) = n > 1 = 0 + 1 = \mathrm{Kdim}_R (R/I) + j_R^{\ell}(R/I).$$

{\rm(vi)} Let $k$ be a field and let $R$ be the algebra of $2 \times 2$ upper triangular matrices over $k$, with maximal ideal $I := \begin{pmatrix} k & k \\ 0 & 0 \end{pmatrix}$. Trivially, by Examples \ref{CMex}(ii), $R$ is $k$-Macaulay. One easily checks that $j_R^{\ell}(R/I) = 1$ and $j^r_R(R/I) = 0$, so that $R$ is neither Krull-Macaulay, nor GK-Macaulay, nor grade symmetric.
\end{rexamples}

\begin{lemma} \label{KCMtest} Let $R$ be a noetherian ring which is a finite module over its centre $Z(R)$. Then $R$ is Krull-Macaulay $\Leftrightarrow$ for every prime ideal $P$ of $R$,
\begin{equation}\label{primemac} \mathrm{Kdim}(R) = \mathrm{Kdim}(R/P) + j_R(R/P). \end{equation}
\end{lemma}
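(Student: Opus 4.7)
The forward direction is immediate: specialise the Krull-Macaulay identity to $M = R/P$.

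For the converse, assume (\ref{primemac}) for all primes $P$ and fix a nonzero finitely generated right $R$-module $M$; the left case is handled identically, since $R$ being finite over its centre forces $\mathrm{Kdim}$ to be side-symmetric \cite[Corollary 6.4.13]{MR} and (\ref{primemac}) is stated for two-sided primes. Because $R$ is noetherian and fully bounded, $M$ admits a prime filtration $0 = M_0 \subsetneq \cdots \subsetneq M_n = M$ whose factors $F_i := M_i/M_{i-1}$ are finitely generated torsion-free right $R/P_i$-modules. I will argue by induction on $d := \mathrm{Kdim}(M)$, establishing in the same induction a grade-rigidity sublemma: for any torsion-free finitely generated $R/P$-module $F$, one has $\mathrm{Kdim}(F) = \mathrm{Kdim}(R/P)$ and $j_R(F) = j_R(R/P)$. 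The first is standard via Goldie rank; for the second, Goldie's theorem supplies cross-embeddings $(R/P)^s \hookrightarrow F^t \hookrightarrow (R/P)^u$ with cokernels of Krull dimension strictly less than $\mathrm{Kdim}(R/P)$, so the inductive hypothesis applied to the cokernels, combined with the long exact sequence of $\mathrm{Ext}^*(-,R)$, forces the two grades to agree. Together with (\ref{primemac}) this gives $\mathrm{Kdim}(F_i) + j_R(F_i) = \mathrm{Kdim}(R)$ for every filtration factor.

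To propagate the identity from the factors to $M$ itself, write $M^{(d-1)}$ for the largest submodule of $M$ of Krull dimension strictly less than $d$ and set $\widetilde{M} := M/M^{(d-1)}$, which is pure of Krull dimension $d$ in the sense that every nonzero submodule has $\mathrm{Kdim} = d$. Every associated prime of $\widetilde{M}$ therefore satisfies $\mathrm{Kdim}(R/P) = d$; iterated application of the long exact sequence of $\mathrm{Ext}^*(-,R)$ along a prime filtration of $\widetilde{M}$ built from such associated primes, combined with the sublemma, (\ref{primemac}), and the inductive hypothesis to absorb any lower-dimensional subquotients that arise, yields $j_R(\widetilde{M}) = \mathrm{Kdim}(R) - d$. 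By the inductive hypothesis, $j_R(M^{(d-1)}) \geq \mathrm{Kdim}(R) - (d-1) > \mathrm{Kdim}(R) - d$, so the long exact sequence of $\mathrm{Ext}^*(-,R)$ on $0 \to M^{(d-1)} \to M \to \widetilde{M} \to 0$ forces $j_R(M) = \mathrm{Kdim}(R) - d$, completing the induction. The main obstacle is the grade-rigidity sublemma $j_R(F) = j_R(R/P)$ for torsion-free factors, together with the careful filtration analysis needed to pin down $j_R(\widetilde{M})$; once both are in place, the remainder is routine bookkeeping with long exact sequences.
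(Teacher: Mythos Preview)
Your approach coincides with the paper's: induct on $\mathrm{Kdim}(M)$, pass to a filtration whose factors are torsion-free over prime quotients $R/P$, and for each such factor compare its grade with $j_R(R/P)$ via cross-embeddings whose cokernels have strictly smaller Krull dimension. The paper works with uniform left ideals rather than arbitrary torsion-free modules and cites \cite[Theorem 9.6]{GW} for the filtration, but the mechanism is identical, and your grade-rigidity sublemma is exactly the paper's computation for $U^{\oplus m}$ versus $R/P$.

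The one place your sketch is too quick is the claim $j_R(\widetilde{M}) = \mathrm{Kdim}(R) - d$. The bound $j_R(\widetilde{M})\geq n-d$ is routine, but for $\mathrm{Ext}^{n-d}_R(\widetilde{M},R)\neq 0$ you need the \emph{top} factor of your filtration to have Krull dimension $d$: if the top factor $G_r$ has smaller dimension then $\mathrm{Ext}^{n-d}(G_r,R)=0$ and the long exact sequence only yields an injection $\mathrm{Ext}^{n-d}(\widetilde{M},R)\hookrightarrow \mathrm{Ext}^{n-d}(N_{r-1},R)$, which is not enough. A $d$-pure module can certainly admit prime filtrations whose top factor has smaller dimension (e.g.\ $0\subset (x)\subset k[x]$), so ``absorbing lower-dimensional subquotients via the inductive hypothesis'' does not by itself close this direction. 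The paper's remedy is precisely the structured filtration of \cite[Theorem 9.6]{GW}: for an FBN ring it places everything of Krull dimension $<d$ into a single bottom term $M_1$, with all higher factors uniform left ideals of primes $P_j$ satisfying $\mathrm{Kdim}(R/P_j)=d$. Applied to your $\widetilde{M}$, purity forces $M_1=0$, so every factor---in particular the top one---has dimension $d$, and then the long exact sequence from $0\to N_{r-1}\to \widetilde{M}\to G_r\to 0$ gives $\mathrm{Ext}^{n-d}(\widetilde{M},R)\neq 0$ immediately. With that single refinement your outline is complete.
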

\begin{proof} $\Rightarrow$: Trivial

$\Leftarrow$: Let $M$ be a finitely generated $R$-module. We prove by induction on $\mathrm{Kdim}(M)$ that
\begin{equation} \label{key} n := \mathrm{Kdim}(R) = \mathrm{Kdim}(M) + j_R(M). \end{equation}
$\mathrm{Kdim}(M) = 0:$ Let $X$ be an irreducible $R$-module. Then $R/\mathrm{Ann}_R(X)$ is simple Artinian by Kaplansky's theorem, \cite[Theorem 3.3.8]{MR}, so $R/\mathrm{Ann}_R(X)$ is a finite direct sum of copies of $X$, and (\ref{key}) for $M=X$ follows from (\ref{primemac}) for $P = \mathrm{Ann}_R(X)$. Now (\ref{key}) for an arbitrary finitely generated artinian $R$-module follows by a routine application of the long exact sequence of cohomology.

For the induction step, let $t \geq 0$ and suppose that (\ref{key}) is proved when $\mathrm{Kdim}(M) \leq t$. Assume that $\mathrm{Kdim}(M) = t+1$. By \cite[Theorem 9.6]{GW}, since $R$ is FBN, $M$ has a generalised composition series
$$ 0 = M_0 \subseteq M_1 \subset M_2 \subset \cdots \subset M_r = M, $$
where $r \geq 2$, $\mathrm{Kdim}(M_1) \leq t$ and $0 \neq M_j/M_{j-1} \cong U_j$, with $U_j$ a uniform left ideal of $R/P_j$ and $P_j$ a prime ideal of $R$ with $\mathrm{Kdim}(R/P_j) = t+1$, $1 < j \leq r$.

Suppose that (\ref{key}) is proved with $M$ replaced by any subfactor in the above generalised composition series. So $j(M_1) \geq n-t$ and $j(M_j/M_{j-1}) = n-t-1$, for $1<j \leq r$. Therefore, for $\ell < n-t-1$, the long exact sequence of $\mathrm{Ext}$ shows that
$$ \mathrm{Ext}^{\ell}_R(M,R) = 0, $$
and hence $j(M) \geq n-t-1$. Then, the short exact sequence
$$ 0 \longrightarrow M_{r-1}\longrightarrow M \longrightarrow M/M_{r-1} \cong U_r \longrightarrow 0$$
yields
$$ 0 = \mathrm{Ext}_R^{n-t-2}(M_{r-1}, R) \longrightarrow\mathrm{Ext}_R^{n-t-1}(U_r, R) \longrightarrow \mathrm{Ext}_R^{n-t-1}(M, R).$$
Since the middle term is non-zero, $j(M) = n-(t+1)$, as required.

So we have to prove (\ref{key}) when $M$ is a uniform left ideal of a prime factor ring $R/P$ of $R$, with $\mathrm{Kdim}(R/P) = t+1$. Let $m$ be the Goldie rank of $R/P$, and set $I$ to denote the direct sum of $m$ copies of $M$. By \cite[7.24 and 7.8(c)]{GW}, there are exact sequences
\begin{equation}\label{twee} 0\longrightarrow I \longrightarrow R/P \longrightarrow X \longrightarrow 0 \end{equation}
and
\begin{equation}\label{dee} 0\longrightarrow R/P \longrightarrow I \longrightarrow Y \longrightarrow 0 \end{equation}
with $\mathrm{Kdim}(X) \leq t$ and $\mathrm{Kdim}(Y) \leq t$. By (\ref{primemac}),
\begin{equation} \label{hind} j(R/P) = n-(t+1) \end{equation}
and, by induction,
\begin{equation} \label{sight} j(X) \geq n-t \textit{  and  }  j(Y) \geq n-t. \end{equation}
Let $w < n-(t+1)$. Then (\ref{dee}) gives the exact sequence
$$ \mathrm{Ext}^w_R(Y,R)\longrightarrow  \mathrm{Ext}^w_R(I,R)\longrightarrow  \mathrm{Ext}^w_R(R/P,R), $$
where the outside terms are both zero by (\ref{hind}) and (\ref{sight}). Hence
\begin{equation}\label{glory} j_R(I) \geq n-(t+1). \end{equation}
Then (\ref{twee}) gives
 \begin{equation}  \mathrm{Ext}_R^{n-(t+1)}(X,R)\longrightarrow \mathrm{Ext}_R^{n-(t+1)}(R/P,R)\longrightarrow \mathrm{Ext}_R^{n-(t+1)}(I,R), \end{equation}
 where the first term is zero by (\ref{sight}) and the second term is non-zero by (\ref{hind}). Hence,
 \begin{equation}\label{here} j_R(I) \leq n-(t+1). \end{equation}
 Thus the induction step follows from (\ref{glory}) and (\ref{here}).
\end{proof}

The connection between homological grade and grade defined in terms of $R$-sequences is afforded by the following lemma.

\begin{lemma} \label{grades} Let $R$ be a noetherian ring which is a finite module over its centre $Z(R)$. Let $P$ be a prime ideal of $R$ and let $\mathfrak{p} = P \cap Z(R)$.

{\rm(i)} $G(\mathfrak{p},R) \leq \mathrm{min}\{j_R^{\ell}(R/P), j_R^r(R/P)\}$.

{\rm(ii)} Suppose that $P$ is a maximal ideal of $R$. There exists a maximal ideal  $Q$ of $R$ with $Q \cap Z(R) = \mathfrak{p}$ and $G(\mathfrak{p},R) = j_R^{\ell}(R/Q).$ A similar statement holds for $j_R^r.$

For the remainder of the lemma, assume that $R$ is $Z(R)$-Macaulay.

{\rm(iii)} There exists a prime ideal  $Q$ of $R$ with $Q \cap Z(R) = \mathfrak{p}$ and $G(\mathfrak{p},R) = j_R^{\ell}(R/Q).$ A similar statement holds for $j_R^r.$

{\rm(iv)} Let $Q$ be any prime ideal of $R$ for which $t := G(\mathfrak{q},R) = j_R^{\ell}(R/Q)$. Then $\mathrm{Ext}_R^t(R/Q, R)$ is a torsion-free $Z(R)/\mathfrak{q}$-module.

{\rm(v)} Let $Q$ be  a prime ideal of $R$, with $\mathfrak{q} := Q \cap R$, and $t := G(\mathfrak{q},R)$. Let $B$ be a non-zero finitely generated torsion-free $R/Q$-module. Then $j_R^{\ell}(B) = t$ if and only if $j_R^{\ell}(R/Q) = t.$ A similar statement applies to $j_R^r$ when $B$ is a right module.
\end{lemma}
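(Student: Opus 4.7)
The plan is to dispatch (i)--(iii) by direct constructions and localisation, and then to extract a single extension lemma which powers both (iv) and (v). For (i), let $x_1, \ldots, x_n \in \mathfrak{p}$ be a $Z(R)$-sequence on $R$. Since the $x_i$ are central they are regular on $R$ as either right or left module, and they annihilate $R/P$ on both sides. An induction on $n$ using the short exact sequence $0 \to R \xrightarrow{\cdot x_i} R \to R/(x_1, \ldots, x_i)R \to 0$ applied to $\mathrm{Hom}_R(R/P, -)$ then forces $\mathrm{Ext}^j_R(R/P, R) = 0$ for $j < n$, symmetrically from both sides.

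For (ii), set $t := G(\mathfrak{p}, R)$ and pick a maximal $R$-sequence $x_1, \ldots, x_t$ in $\mathfrak{p}$ on $\,_R R$, writing $I = \sum R x_i$. Maximality together with \cite[Proposition 3.4]{BHM} (as used in the proof of Theorem \ref{hinge}) produces $c \in R \setminus I$ with $\mathfrak{p} c \subseteq I$; then $R(c + I) \subseteq R/I$ is a nonzero left submodule annihilated by $\mathfrak{p}$, hence a nonzero module over the artinian algebra $R/\mathfrak{p}R$. Choose a simple left $R$-submodule $U \cong R/Q$; this $Q$ is a maximal ideal of $R$ with $Q \cap Z(R) = \mathfrak{p}$, and the Rees-type isomorphism for central regular sequences gives $\mathrm{Ext}^t_R(R/Q, R) \cong \mathrm{Hom}_R(R/Q, R/I)$, in which the inclusion $U \hookrightarrow R/I$ is a nonzero element. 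So $j^\ell_R(R/Q) \leq t$, with the reverse bound from (i); $j^r_R$ is handled symmetrically. For (iii), assume $R$ is $Z(R)$-Macaulay; Theorem \ref{CMprops}(iii) then makes $R_\mathfrak{p}$ into $Z(R)_\mathfrak{p}$-Macaulay with $\mathfrak{p}_\mathfrak{p}$ maximal, so (ii) delivers a prime $Q$ of $R$ with $Q \cap Z(R) = \mathfrak{p}$ and $j^\ell_{R_\mathfrak{p}}(R_\mathfrak{p}/Q_\mathfrak{p}) = G(\mathfrak{p}_\mathfrak{p}, R_\mathfrak{p})$. The Macaulay hypothesis converts Theorem \ref{CMprops}(i) into equalities, giving the common value $t = G(\mathfrak{p}, R)$, and since $\mathrm{Ext}$ commutes with central flat localisation, the vanishing for $i < t$ and non-vanishing at $i = t$ transfer back to $R$.

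The main obstacle, which powers (iv) and (v), is the extension lemma $(\star)$: for any $s \in Z(R) \setminus \mathfrak{q}$, $G(\mathfrak{q} + sZ(R), R) \geq t + 1$. To prove it, pass to $\bar R = R/\sum x_i R$, which by Theorem \ref{CMprops}(vii) is Macaulay over the image of $Z(R)$ and by Theorem \ref{CMprops}(vi) has an artinian quotient ring, so its zero-divisors lie in the finite union of its minimal primes. By prime avoidance, if the image of $\mathfrak{q} + sZ(R)$ in $\bar R$ contained no regular element, it would lie in a single minimal prime $\bar P'$; lifting, $P' \supseteq \mathfrak{q} + sZ(R)$, so $\mathfrak{q} \subsetneq P' \cap Z(R)$ and $\mathrm{ht}(P' \cap Z(R)) \geq t + 1$. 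But $P'$ is minimal over the $t$-generated ideal $\sum x_i R$, so the generalised principal ideal theorem gives $\mathrm{ht}(P') \leq t$, while Theorem \ref{CMprops}(ii) forces $\mathrm{ht}(P') = \mathrm{ht}(P' \cap Z(R))$, a contradiction. Granted $(\star)$, part (iv) drops out from the long exact sequence of $\mathrm{Ext}_R(-, R)$ on $0 \to R/Q \xrightarrow{\cdot s} R/Q \to R/(Q + sR) \to 0$: since $\mathrm{Ext}^{t-1}_R(R/Q, R) = 0$ by hypothesis, the kernel of multiplication by $s$ on $\mathrm{Ext}^t_R(R/Q, R)$ is $\mathrm{Ext}^t_R(R/(Q + sR), R)$, which vanishes because (i) and $(\star)$ give $j^\ell_R(R/(Q + sR)) \geq t + 1$.

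For (v), the standard theory of finitely generated torsion-free modules over the prime Goldie ring $R/Q$ supplies exact sequences $0 \to B \to (R/Q)^a \to T \to 0$ and $0 \to R/Q \to B^b \to T' \to 0$ with $T, T'$ torsion. Since $R/Q$ is finite over the domain $Z(R)/\mathfrak{q}$, the centre $Z(R/Q)$ is integral over $Z(R)/\mathfrak{q}$, and a standard integral-dependence argument shows that the two-sided annihilator of each of $T$ and $T'$ meets $Z(R)/\mathfrak{q}$ nontrivially; hence both are killed by elements of $Z(R) \setminus \mathfrak{q}$. Combining (i) with $(\star)$ gives $j(T), j(T') \geq t + 1$, and the two long exact sequences of $\mathrm{Ext}_R(-, R)$ translate the vanishing/non-vanishing of $\mathrm{Ext}^i_R(R/Q, R)$ at $i = t$ into the same for $\mathrm{Ext}^i_R(B, R)$ and conversely, yielding the biconditional. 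The right-module version follows by the symmetric argument.
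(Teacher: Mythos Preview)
Your proof is correct, and for parts (i) and (ii) it matches the paper's approach closely (Rees change of rings, then the standard zero-divisor argument via \cite[Proposition 3.4]{BHM}). For (iii)--(v), however, you take a genuinely different route.

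For (iii), the paper argues directly in $R$: it produces an annihilator prime $Q/I$ of $R/I$ with $\mathfrak{p}\subseteq Q\cap Z(R)$, and then rules out strict containment by comparing $G(\mathfrak{q},R)$ with $j_R^\ell(R/Q)$ via Theorem~\ref{CMprops}(ii) and part (i). You instead localise at $\mathfrak{p}$, reduce to the case where $\mathfrak{p}$ is maximal, apply (ii), and pull back. This works, but your phrase ``the vanishing for $i<t$ \ldots\ transfer[s] back to $R$'' is imprecise: vanishing after localisation does not give vanishing before. What you actually need (and have) is part (i), which gives $j_R^\ell(R/Q)\geq G(\mathfrak{p},R)=t$ directly; only the \emph{non}-vanishing at degree $t$ needs to transfer back, and that it does.

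For (iv) and (v), the paper works structurally inside $R/I$: for (iv) it observes that a torsion element of $\mathrm{Hom}_{R/I}(R/Q,R/I)$ would force a non-minimal annihilator prime of $R/I$, contradicting the existence of an artinian quotient ring (Theorem~\ref{CMprops}(vi),(vii) and Small's theorem); for (v) it localises at $\mathfrak{q}$ and decomposes everything into copies of the simple $R_\mathfrak{q}/Q_\mathfrak{q}$-module. Your single extension lemma $(\star)$ replaces both arguments by a uniform long-exact-sequence computation. This is a clean unification, and your proof of $(\star)$ via prime avoidance in $\overline{R}$ and the generalised principal ideal theorem is sound. Two small points: first, you apply part (i) to non-prime quotients such as $R/(Q+sR)$ and to $T,T'$, but your inductive proof of (i) works verbatim for any module annihilated by the regular sequence, so this is fine. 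Second, the exact sequences in (v) may require replacing $B$ by a power $B^n$ (matching Goldie ranks) to make the cokernels torsion; since $j_R(B^n)=j_R(B)$ this is harmless.
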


\begin{proof} Let $P$ and $\mathfrak{p}$ be as stated.

(i) Let $\{x_1, \ldots, x_t\}$ be a maximal $Z(R)$-sequence in $P$ and set $I := \sum_i x_iR.$ By the Change of Rings Theorem \cite[Theorem 9.37]{R},
\begin{equation}\label{change} \mathrm{Ext}_{R/I}^m(R/P,R/I) \cong \mathrm{Ext}_R^{m+t}(R/P,R), \end{equation}
for all $m \in \mathbb{Z}.$ This proves (i).

(ii) Suppose now that $P$ is maximal. Keep the notation as above. The maximality of $t$ implies that $\mathfrak{p}/I$ consists of zero divisors in $R/I$, so, by \cite[Proposition 3.4]{BHM}, there exists $y \in R \setminus I$ such that $\mathfrak{p}y \subseteq I.$ Thus there exists a left annihilator prime ideal $Q/I$ of $R/I$ with $\mathfrak{p} \subseteq Q,$ and maximality of $\mathfrak{p}$ ensures that $Q\cap Z(R) = \mathfrak{p}$.  Thus, working with \emph{left} modules, $\mathrm{Hom}_{R}(R/Q, R/I) \neq 0$, and therefore (\ref{change}) with $P$ replaced by $Q$ shows that $j_R^{\ell}(R/Q) \leq t.$ Combining this inequality with (i) for $Q$ in place of $P$ now gives (ii), since by INC, Proposition \ref{INCetc}(ii), $Q$ is a maximal ideal of $R$.

Suppose henceforth that $R$ is $Z(R)$-Macaulay.

{\rm(iii)}  Carry through the proof as in (ii). Everything proceeds in the same way to get a prime ideal $Q$ of $R$ with $I \subseteq Q$ and $\mathfrak{p} \subseteq Q \cap Z(R)$, with $Q/I$ a left annihilator prime of $R/I$. By (\ref{change}),
\begin{equation}\label{guy} j_R^{\ell}(R/Q) = G(\mathfrak{p},R). \end{equation}
Let $\mathfrak{q} := Q \cap R$, so that $\mathfrak{p} \subseteq \mathfrak{q}.$ Suppose for a contradiction that this inclusion is strict, so that $\mathrm{ht}(\mathfrak{p}) < \mathrm{ht}(\mathfrak{q}).$ Since $R$ is $Z(R)$-Macaulay, Theorem \ref{CMprops}(ii) and (\ref{guy}) imply that
$$  G(\mathfrak{q},R) =  \mathrm{ht}(\mathfrak{q}) > \mathrm{ht}(\mathfrak{p}) = G(\mathfrak{p},R) = j_R^{\ell}(R/Q).$$
But this contradicts (i) of the lemma, so $Q \cap R = \mathfrak{p}$ as required.

{\rm(iv)} Let $Q$ be a prime for which $G(\mathfrak{q}, R) = j_R^{\ell}(R/Q) := t$. Let $\{x_1, \ldots, x_t\}$ be a maximal $Z(R)$-sequence in $Q$ and set $I := \sum_i x_iR.$ By (\ref{change}), $ \{0\} \neq \mathrm{Ext}^t_R(R/Q,R) \cong \mathrm{Hom}_{R/I}(R/Q,R/I).$ Suppose that $f$ is a non-zero element of $\mathrm{Hom}_{R/I}(R/Q,R/I)$ with $fz = 0$ for some $z \in Z(R) \setminus \mathfrak{q}$. So $\mathrm{im}(f)$ is a non-zero left ideal of $R/I$ with $(Q + Rz)\mathrm{im}(f) = 0$, and hence there is an annihilator prime $W/I$ of $R/I$ with $Q/I \varsubsetneq W/I$. By Small's theorem, \cite[Corollary 4.1.4]{MR}, this contradicts Theorem \ref{CMprops}(vi) applied to $R/I$, since $R/I$ is $Z(R)+I/I$-Macaulay by Theorem \ref{CMprops}(vii). This proves (iv).

{\rm(v)} Let $Q$ and $B$ be as stated, with $\{x_1, \ldots, x_t\}$ be a maximal $Z(R)$-sequence in $Q$, $I := \sum_i Rx_i.$ Suppose first that $j_R^{\ell}(R/Q) = t.$ As in the proof of (i), $\mathrm{Ext}_{R/I}^m(B,R/I) \cong \mathrm{Ext}_R^{m+t}(B,R)$ for all $m \in \mathbb{Z}$. Hence, $j_R^{\ell}(B) \geq t.$ To prove the reverse inequality, localise $\mathrm{Ext}_{R}^t(R/Q,R)$ by inverting $Z(R) \setminus \mathfrak{q}$: we see from (iv) that $\mathrm{Ext}_{R_{\mathfrak{q}}}^{t}(R_{\mathfrak{q}}/Q_{\mathfrak{q}},R_{\mathfrak{q}})$ is non-zero. Since this space is the direct sum of copies of $\mathrm{Ext}_{R_{\mathfrak{q}}}^{t}(V,R_{\mathfrak{q}})$, where $V$ is the irreducible left $R_{\mathfrak{q}}/Q_{\mathfrak{q}}$-module, it follows that $\mathrm{Ext}_{R_{\mathfrak{q}}}^{t}(V,R_{\mathfrak{q}})$ is non-zero also. But the localisation of $\mathrm{Ext}_R^{t}(B,R)$ at $\mathfrak{q}$, that is $\mathrm{Ext}_{R_{\mathfrak{q}}}^t(B_{\mathfrak{q}}, R_{\mathfrak{q}}),$ is the direct sum of a non-zero number of copies of $\mathrm{Ext}_{R_{\mathfrak{q}}}^{t}(V,R_{\mathfrak{q}})$, and is therefore non-zero. It follows that $j_R^{\ell}(B) \leq t$, and so we get equality. The converse, and the proof for $j_R^r$ are exactly similar.
\end{proof}

Example \ref{graderex}(vi) shows that we can't always take $Q = P$ in (ii) or (iii) of the above lemma, and indeed that different ideals $Q$ may be needed for $j^{\ell}$ and $j^r$.

\subsection{Basic properties of grade symmetric $Z(R)$-Macaulay rings} \label{equiv} A key feature of the grade symmetry property is encapsulated in the following consequence of M$\ddot{\mathrm{u}}$ller's theorem, \cite[Theorem III.9.2]{BG}, \cite{Mu}.

\begin{lemma} \label{muller} Let $R$ be a noetherian ring which is a finite module over its centre $Z(R)$.

{\rm(i)} Suppose that $R$ is grade symmetric. Let $P$ be a maximal ideal of $R$, with $ \mathfrak{p} := P \cap Z(R).$ Then
\begin{equation} \label{lockin} j_R(R/P) = G(\mathfrak{p},R).\end{equation}

{\rm(ii)} If $R$ is grade symmetric and $Z(R)$-Macaulay, then (\ref{lockin}) is true for all prime ideals of $R$. In particular, for all primes $P$ of $R$, setting $\mathfrak{p} := P \cap Z(R),$
$$ j_R(R/P) = \mathrm{ht}(\mathfrak{p}), $$
and $\mathrm{Ext}_R^{j(R/P)}(R/P,R)$ is a torsion-free $Z(R)/\mathfrak{p}$-module.
\end{lemma}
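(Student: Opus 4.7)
My plan is to establish (i) via a link-transfer argument along the clique of $P$, exploiting M\"{u}ller's theorem together with grade symmetry and parts (iv), (v) of Lemma~\ref{grades}; part (ii) will then follow by the same method, using Lemma~\ref{grades}(iii) in place of \ref{grades}(ii), with the subsidiary claims coming from Theorem~\ref{CMprops}(ii) and Lemma~\ref{grades}(iv).

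For (i), Lemma~\ref{grades}(i) already gives $G(\mathfrak{p},R) \leq j_R^{\ell}(R/P)$ and $G(\mathfrak{p},R) \leq j_R^r(R/P)$, so only the reverse inequalities are at issue. Lemma~\ref{grades}(ii) yields \emph{some} maximal ideal $Q_0$ of $R$ with $Q_0 \cap Z(R) = \mathfrak{p}$ and $j_R^{\ell}(R/Q_0) = G(\mathfrak{p},R)$, but Example~\ref{graderex}(vi) warns that in general $Q_0 \neq P$. The device for transferring the equality from $Q_0$ to the given $P$ is M\"{u}ller's theorem (\cite[Theorem III.9.2]{BG}, \cite{Mu}), which asserts that for $R$ a finite module over $Z(R)$ the clique of any prime of $R$ coincides with the fibre of primes contracting to a single prime of $Z(R)$. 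Thus $P$ and $Q_0$ lie in a common clique and are joined by a finite chain of second-layer links.

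The decisive step is the link-transfer claim: if $P' \leadsto P''$ is a link with $P', P''$ both lying over $\mathfrak{p}$, and $j_R^{\ell}(R/P') = G(\mathfrak{p},R)$, then $j_R^{\ell}(R/P'') = G(\mathfrak{p},R)$. By definition of a link, there is an ideal $A$ with $P'P'' \subseteq A \subsetneq P' \cap P''$ such that $L := (P' \cap P'')/A$ is non-zero and torsion-free both as a left $R/P'$-module and as a right $R/P''$-module; being a subquotient of $R$, $L$ is automatically a central $R$-bimodule. The chain of equivalences then runs: Lemma~\ref{grades}(v) (left version, with $Q = P'$ and $B = L$) converts $j_R^{\ell}(R/P') = G(\mathfrak{p},R)$ into $j_R^{\ell}(L) = G(\mathfrak{p},R)$; grade symmetry applied to $L$ yields $j_R^r(L) = G(\mathfrak{p},R)$; Lemma~\ref{grades}(v) (right version, with $Q = P''$ and $B = L$) gives $j_R^r(R/P'') = G(\mathfrak{p},R)$; and finally grade symmetry applied to $R/P''$ delivers $j_R^{\ell}(R/P'') = G(\mathfrak{p},R)$. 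Iterating along the chain of links from $Q_0$ to $P$ completes the proof of (i); the corresponding statement for $j_R^r(R/P)$ then follows by one last use of grade symmetry.

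For (ii), the identical argument applies with Lemma~\ref{grades}(iii) used in place of (ii): the $Z(R)$-Macaulay hypothesis produces an initial prime $Q_0$ (not necessarily maximal) lying over $\mathfrak{p}$ with $j_R^{\ell}(R/Q_0) = G(\mathfrak{p},R)$, while M\"{u}ller's theorem still places any prime $P$ over $\mathfrak{p}$ in the clique of $Q_0$, and the link-transfer runs verbatim. The identification $G(\mathfrak{p},R) = \mathrm{ht}(\mathfrak{p})$ is exactly Theorem~\ref{CMprops}(ii), and the torsion-freeness of $\mathrm{Ext}_R^{j(R/P)}(R/P, R)$ over $Z(R)/\mathfrak{p}$ is immediate from Lemma~\ref{grades}(iv). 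The main obstacle I anticipate is pinning down that the bimodule $L$ attached to a link really is torsion-free on both sides --- this is built into the second-layer formulation of a link in Jategaonkar's sense, but must be cited carefully so that both directions of the biconditional in Lemma~\ref{grades}(v) are available.
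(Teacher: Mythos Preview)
Your overall strategy---start from a prime $Q_0$ over $\mathfrak{p}$ where equality is known, then transfer along a chain of links using M\"uller's theorem and grade symmetry---is exactly the paper's approach. However, your argument for part~(i) has a genuine gap: you invoke Lemma~\ref{grades}(v) to move between $j_R^{\ell}(R/P')$ and $j_R^{\ell}(L)$, but parts~(iii)--(v) of Lemma~\ref{grades} are stated only under the standing hypothesis that $R$ is $Z(R)$-Macaulay, which is \emph{not} assumed in Lemma~\ref{muller}(i). So as written your link-transfer step is unjustified in~(i).

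The paper repairs this by exploiting the extra structure available in the maximal case. Since $P$ is maximal, $\mathfrak{p}$ is a maximal ideal of $Z(R)$, and each $R/P_i$ in the chain is therefore simple artinian (by Kaplansky's theorem, or simply because a prime ring finite over a field is simple artinian). Hence any non-zero finitely generated left $R/P_i$-module---in particular the linking bimodule $B_i$---is a finite direct sum of copies of the unique simple $R/P_i$-module, and so has the same homological grade as $R/P_i$ itself. This replaces your appeal to Lemma~\ref{grades}(v) by an elementary observation requiring no Macaulay hypothesis. Your argument for part~(ii) is fine, since there the $Z(R)$-Macaulay hypothesis is in force and Lemma~\ref{grades}(v) is available; indeed the paper explicitly notes that~(v) is what is needed to run the same transfer argument for non-maximal primes.
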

\begin{proof} (i) Suppose that $R$ is grade symmetric, and let $P$ be a maximal ideal of $R$ with $\mathfrak{p} := P \cap Z(R).$ Lemma \ref{grades}(ii) guarantees the existence of a maximal ideal $Q$ of $R$ with $Q \cap Z(R) = \mathfrak{p}$ and $j_R^{\ell}(R/Q) = G(\mathfrak{p}, R)$. By M$\ddot{\textrm{u}}$ller's theorem, \cite{Mu}, see also \cite[Theorem III.9.2]{BG}, there is an integer $n \geq 1$ and a finite sequence of non-zero finitely generated central $R/P_i - R/P_{(i+1)}$-bimodules $B_1, \ldots , B_n$, with $Q = P_1, \ldots , P_n = P$, such that each $P_i$ is a maximal ideal of $R$ whose intersection with $Z(R)$ is $\mathfrak{p}$. From grade symmetry, bearing in mind also that each factor $R/P_i$ is simple artinian since $\mathfrak{p}$ is a maximal ideal of $Z(R)$, it follows that $j_R^r (R/P) = j_R^{\ell}(R/P) = j_R^{\ell}(R/Q)$.

(ii) The proof is exactly the same as the proof of (i), except that now we need to invoke Lemma \ref{grades}(v) to handle the homological grade of the bimodules $B_i$, and Lemma \ref{grades}(iv) gives the final claim.
\end{proof}

For injectively homogeneous rings, (see Definition \ref{injhom}), a more precise version of the following result was proved in \cite[Theorem 5.5]{BrH2}, generalised in \cite[Theorem 4.2]{ASZ} to grade symmetric Auslander-Gorenstein rings satisfying a polynomial identity. The final sentence of the corollary was proved for noetherian rings of finite injective dimension in \cite[Corollary 4.6]{Mi}; see also \cite[Theorem 2.3]{ASZ}.

\begin{corollary} \label{resolve} Let $R$ be a noetherian ring which is a finite module over its centre $Z(R)$, and suppose that $R$ is grade symmetric and $Z(R)$-Macaulay. Let
\begin{equation}\label{res} 0 \rightarrow R \rightarrow E_0 \rightarrow E_1 \rightarrow \cdots \rightarrow E_i \rightarrow \cdots \end{equation}
be a minimal injective resolution of $\,_RR$, and let $P$ be a prime ideal of $R$, with $\mathrm{ht}(\mathfrak{p}) = t$. Then the first occurrence in (\ref{res}) of the indecomposable injective $R$-module with assassinator $P$ is as a summand of $E_t$. In particular, every indecomposable injective occurs at least once in (\ref{res}).
\end{corollary}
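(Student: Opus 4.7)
My plan is to combine Lemma \ref{muller}(ii) with a localization argument at $\mathfrak{p}$ and the standard identification, in a minimal injective resolution, of indecomposable summands via $\mathrm{Hom}$ from a semisimple module.

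First, Lemma \ref{muller}(ii) supplies the two essential inputs: $j_R(R/P) = t$, so that $\mathrm{Ext}^i_R(R/P, R) = 0$ for $i < t$; and $\mathrm{Ext}^t_R(R/P, R)$ is a non-zero torsion-free $Z(R)/\mathfrak{p}$-module, so its localization at $\mathfrak{p}$ remains non-zero. Flat base change along $Z(R) \to Z(R)_\mathfrak{p}$ therefore gives $\mathrm{Ext}^i_{R_\mathfrak{p}}(R_\mathfrak{p}/P_\mathfrak{p}, R_\mathfrak{p}) = 0$ for $i < t$ and $\mathrm{Ext}^t_{R_\mathfrak{p}}(R_\mathfrak{p}/P_\mathfrak{p}, R_\mathfrak{p}) \neq 0$.

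Next, I localize (\ref{res}) at $\mathfrak{p}$. By Theorem \ref{CMprops}(ii)--(iii), $R_\mathfrak{p}$ is $Z(R)_\mathfrak{p}$-Macaulay and $P_\mathfrak{p}$ is one of its maximal ideals, so $R_\mathfrak{p}/P_\mathfrak{p}$ is simple artinian by Kaplansky's theorem. One checks that $(E_\bullet)_\mathfrak{p}$ is still a minimal injective resolution of $R_\mathfrak{p}$, with $E(R/P)_\mathfrak{p} \cong E_{R_\mathfrak{p}}(R_\mathfrak{p}/P_\mathfrak{p})$; hence the multiplicity of $E(R/P)$ as a summand of $E_i$ coincides with that of $E_{R_\mathfrak{p}}(R_\mathfrak{p}/P_\mathfrak{p})$ in $(E_i)_\mathfrak{p}$. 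The classical observation $\mathrm{soc}(E'_i) \subseteq \ker(d'_i)$ for any minimal injective resolution $E'_\bullet$ forces the differentials of $\mathrm{Hom}_{R_\mathfrak{p}}(S, (E_\bullet)_\mathfrak{p})$ to vanish for semisimple $S$; applied to $S = R_\mathfrak{p}/P_\mathfrak{p}$ it yields
\[ \mathrm{Ext}^i_{R_\mathfrak{p}}(R_\mathfrak{p}/P_\mathfrak{p}, R_\mathfrak{p}) \;\cong\; \mathrm{Hom}_{R_\mathfrak{p}}(R_\mathfrak{p}/P_\mathfrak{p}, (E_i)_\mathfrak{p}), \]
whose non-vanishing is equivalent to $E_{R_\mathfrak{p}}(R_\mathfrak{p}/P_\mathfrak{p})$ occurring as a summand of $(E_i)_\mathfrak{p}$. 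Combined with the previous paragraph, this pins the first occurrence of $E(R/P)$ to $E_t$. The final clause of the corollary follows because every indecomposable injective right $R$-module has the form $E(R/Q)$ for some prime $Q$, and $\mathrm{ht}(Q \cap Z(R))$ is finite thanks to the Macaulay hypothesis.

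The main obstacle is verifying that $(E_\bullet)_\mathfrak{p}$ remains a minimal injective resolution, i.e., that essentiality of $\mathrm{image}(d_{i-1}) \subseteq E_i$ survives central localization at $\mathfrak{p}$. In the commutative case this is classical; in our module-finite-over-centre setting I expect the same argument to go through summand by summand, since each surviving $(E(R/Q))_\mathfrak{p}$ is again an indecomposable injective over $R_\mathfrak{p}$, and the torsion-freeness of Lemma \ref{grades}(iv) can be used to show that $\mathrm{image}(d_{i-1})$ meets each such summand non-trivially after localization.
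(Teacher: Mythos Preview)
Your argument is correct and follows the same core strategy as the paper: invoke Lemma \ref{muller}(ii) to get $j_R(R/P)=t$, then read off the first occurrence of $E(R/P)$ from the minimal injective resolution via $\mathrm{Ext}^*_R(R/P,R)$. The paper packages the second step into a single citation, \cite[Lemma 2.3]{B}, whereas you unpack it through the localization at $\mathfrak{p}$ and the semisimple-Hom identification; this is precisely the content of the cited lemma in the FBN setting, so the two proofs are essentially the same. Your flagged obstacle, that central localization of a minimal injective resolution over a ring module-finite over its centre remains minimal, is a known fact (it underlies \cite[Lemma 2.3]{B}) and does not require the torsion-freeness input from Lemma \ref{grades}(iv) that you suggest at the end.
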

\begin{proof} This is immediate from Lemma \ref{muller}(ii) together with the use of (\ref{res}) to calculate $\mathrm{Ext}^*_R(R/P,R)$. (For the latter, see for example \cite[Lemma 2.3]{B}.)
\end{proof}

\begin{theorem} \label{opus} Let $R$ be a noetherian ring which is a finite module over its centre $Z(R)$. Then the following statements are equivalent:

{\rm(i)} $R$ is Krull-Macaulay;

{\rm(ii)} $R$ is equicodimensional and locally Krull-Macaulay;

{\rm(iii)} $R$ is $Z(R)$-Macaulay, equicodimensional and grade symmetric;

{\rm(iv)} $G_{Z(R)}(\mathfrak{m}, R) = \mathrm{Kdim}(R)$ for all maximal ideals $\mathfrak{m}$ of $Z(R)$, and $R$ is grade symmetric.

\end{theorem}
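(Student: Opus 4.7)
The plan is to establish the cycle (iii)$\Leftrightarrow$(iv), (iii)$\Rightarrow$(i), (i)$\Rightarrow$(iv), which closes the equivalence (i)$\Leftrightarrow$(iii)$\Leftrightarrow$(iv); I would then handle (ii) separately by localising at every maximal ideal of $Z(R)$. The equivalence (iii)$\Leftrightarrow$(iv) is essentially bookkeeping: under (iii), Lemma \ref{equi}(i) forces every maximal ideal $\mathfrak{m}$ of $Z(R)$ to have codimension $\mathrm{Kdim}(R)$, and Theorem \ref{CMprops}(v),(ii) yield $G(\mathfrak{m},R)=\mathrm{ht}(M)=\mathrm{Kdim}(R)$ for any maximal $M$ of $R$ over $\mathfrak{m}$; under (iv) the inequalities (\ref{local1}) and (\ref{gulp}) sandwich $\mathrm{ht}(M)$ between $G(\mathfrak{m},R)=\mathrm{Kdim}(R)$ and $\mathrm{Kdim}(R)$, giving equicodimensionality, whereupon Theorem \ref{CMprops}(v) produces the $Z(R)$-Macaulay property.

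For (iii)$\Rightarrow$(i) I would apply Lemma \ref{KCMtest}: given a prime $P$ of $R$ with $\mathfrak{p}=P\cap Z(R)$, Lemma \ref{muller}(ii) gives $j_R(R/P)=\mathrm{ht}(\mathfrak{p})$, and Theorem \ref{chains}(iv) (using equicodimensionality) gives $\mathrm{ht}(\mathfrak{p})+\mathrm{Kdim}(Z(R)/\mathfrak{p})=\mathrm{Kdim}(Z(R))$; since $R$ is finite over $Z(R)$ one has $\mathrm{Kdim}(R)=\mathrm{Kdim}(Z(R))$ and $\mathrm{Kdim}(R/P)=\mathrm{Kdim}(Z(R)/\mathfrak{p})$, so (\ref{primemac}) follows. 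For (i)$\Rightarrow$(iv), grade symmetry is immediate: for any central $R$-bimodule $M$ finitely generated on both sides, the left and right Krull dimensions of $M$ agree by \cite[Corollary 6.4.13]{MR}, and Krull-Macaulay on each side forces $j_R^r(M)=j_R^{\ell}(M)$. For each maximal $\mathfrak{m}$ of $Z(R)$, pick a maximal $M$ of $R$ over $\mathfrak{m}$ via Proposition \ref{INCetc}(i); Lemma \ref{muller}(i) gives $j_R(R/M)=G(\mathfrak{m},R)$ and Krull-Macaulay gives $j_R(R/M)=\mathrm{Kdim}(R)$.

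For (i)$\Leftrightarrow$(ii), in the direction (i)$\Rightarrow$(ii) equicodimensionality comes free from the equivalence with (iii), and Krull-Macaulay for each $R_{\mathfrak{m}}$ is verified by Lemma \ref{KCMtest}: a prime of $R_{\mathfrak{m}}$ has the form $P_{\mathfrak{m}}$ with $\mathfrak{p}\subseteq\mathfrak{m}$, and since $\mathrm{Ext}$ commutes with localisation on finitely generated modules while $\mathrm{Ext}^{\mathrm{ht}(\mathfrak{p})}_R(R/P,R)$ is torsion-free over $Z(R)/\mathfrak{p}$ by Lemma \ref{grades}(iv), we obtain $j_{R_{\mathfrak{m}}}(R_{\mathfrak{m}}/P_{\mathfrak{m}})=\mathrm{ht}(\mathfrak{p})$; catenarity and equicodimensionality then deliver $\mathrm{Kdim}(R_{\mathfrak{m}}/P_{\mathfrak{m}})=\mathrm{Kdim}(R_{\mathfrak{m}})-\mathrm{ht}(\mathfrak{p})$. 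For (ii)$\Rightarrow$(iv), equicodimensionality is assumed, applying the already-proved (i)$\Leftrightarrow$(iv) locally yields $G_{Z(R)}(\mathfrak{m},R)=G_{Z(R)_{\mathfrak{m}}}(\mathfrak{m}_{\mathfrak{m}},R_{\mathfrak{m}})=\mathrm{Kdim}(R_{\mathfrak{m}})=\mathrm{Kdim}(R)$, and grade symmetry globalises because each $M_{\mathfrak{m}}$ inherits a central $R_{\mathfrak{m}}$-bimodule structure and $j_R(M)=\min_{\mathfrak{m}}j_{R_{\mathfrak{m}}}(M_{\mathfrak{m}})$ on both sides by localisation-exactness of $\mathrm{Ext}$ on finitely generated modules.

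The main obstacle I anticipate is in the step (i)$\Rightarrow$(ii): one must prevent $j_{R_{\mathfrak{m}}}(R_{\mathfrak{m}}/P_{\mathfrak{m}})$ from exceeding $\mathrm{ht}(\mathfrak{p})$ after inverting $Z(R)\setminus\mathfrak{m}$, i.e.\ one must know that $\mathrm{Ext}^{\mathrm{ht}(\mathfrak{p})}_R(R/P,R)$ remains non-zero locally at $\mathfrak{m}$. This is exactly the torsion-freeness conclusion of Lemma \ref{grades}(iv), whose proof is where the grade symmetric hypothesis does its essential work, and it is the point at which the commutative-style Krull-Macaulay argument would fail without that hypothesis.
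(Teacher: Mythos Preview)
Your proposal is correct and uses essentially the same ingredients as the paper (Lemma \ref{KCMtest}, Lemma \ref{muller}, Lemma \ref{grades}, Theorem \ref{chains}, Theorem \ref{CMprops}), but with a different organisation. The paper runs a single cycle (i)$\Rightarrow$(iv)$\Rightarrow$(iii)$\Rightarrow$(ii)$\Rightarrow$(i); you instead close (i)$\Leftrightarrow$(iii)$\Leftrightarrow$(iv) first and then handle (ii) separately. Your direct route (iii)$\Rightarrow$(i) via Lemma \ref{KCMtest} and Theorem \ref{chains} is slightly cleaner than the paper's passage through (ii). For the return from (ii), the paper proves $j_R(R/P)=\mathrm{ht}(P)$ for each prime directly, by producing via Lemma \ref{grades}(iii),(iv) an auxiliary prime $Q$ where equality holds and then transferring to $P$ using local grade symmetry through Lemma \ref{muller}(ii); your alternative, globalising grade symmetry from the local Krull--Macaulay formula via $j_R(M)=\min_{\mathfrak{m}} j_{R_{\mathfrak{m}}}(M_{\mathfrak{m}})$ and the symmetry of Krull dimension for bimodules over $R_{\mathfrak{m}}$, is equally valid and arguably more conceptual.

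One small misattribution in your final paragraph: the proof of Lemma \ref{grades}(iv) uses only the $Z(R)$-Macaulay hypothesis, not grade symmetry. Grade symmetry is needed one step earlier, via Lemma \ref{muller}(ii), to ensure that the given prime $P$ satisfies the hypothesis $G(\mathfrak{p},R)=j_R^{\ell}(R/P)$ of Lemma \ref{grades}(iv); without grade symmetry one only knows this for \emph{some} prime over $\mathfrak{p}$ (Lemma \ref{grades}(iii)), not for $P$ itself.
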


\begin{proof} (i)$\Rightarrow$(iv): Assume that $R$ is Krull-Macaulay. Then $R$ is grade symmetric by Remark \ref{graderex}(ii). Let $\mathfrak{m}$ be a maximal ideal of $Z(R)$. By Lemma \ref{grades}(ii) there exists a maximal ideal $Q$ of $R$ with $Q \cap Z(R) = \mathfrak{m}$, for which
\begin{equation} \label{gumbo} G_{Z(R)}(\mathfrak{m},R) = j_R^{\ell}(R/Q).\end{equation}
Since $\mathrm{Kdim}(R/Q) = 0$, the Krull Macaulay property now ensures that $G_{Z(R)}(\mathfrak{m}, R) = \mathrm{Kdim}(R)$.

(iv)$\Rightarrow$(iii): Assume (iv). Let $M$ be a maximal ideal of $R$. Then
$$ G_{Z(R)}(\mathfrak{m},R) \leq \mathrm{ht}(M) \leq \mathrm{Kdim}(R), $$
by Theorem \ref{CMprops}(i), so (iv) forces equality throughout. The first equality implies that $R$ is $Z(R)$-Macaulay by Theorem \ref{CMprops}(v), and the second gives equicodimensionality.

(iii)$\Rightarrow$(ii): Suppose that $R$ is $Z(R)$-Macaulay, equicodimensional and grade symmetric. Let $P$ be a prime ideal of $R$, with $\mathfrak{p} := P \cap Z(R).$ By Lemma \ref{muller}(ii),
\begin{equation} \label{mullit} j_R(R/P) = G(\mathfrak{p},R)=: t.\end{equation}
 Thus, by Lemma \ref{grades}(iv), $\mathrm{Ext}^t_R(R/P, R)$ is non-zero and $Z(R)/\mathfrak{p}$-torsion-free. Hence, if $\mathfrak{m}$ is any maximal ideal of $Z(R)$ with $\mathfrak{p} \subseteq \mathfrak{m}$,
\begin{equation}\label{getit} j_{R_{\mathfrak{m}}}(R_{\mathfrak{m}}/P_{\mathfrak{m}}) = t. \end{equation}
Since $R$ is $Z(R)$-Macaulay, $R_{\mathfrak{m}}$ is $Z(R_{\mathfrak{m}})$-Macaulay by Theorem \ref{CMprops}(iv), and so
\begin{equation} \label{hike} G(\mathfrak{p},R) = \mathrm{ht}(P) = \mathrm{ht}(P_{\mathfrak{m}}) = G(P_{\mathfrak{m}}, R_{\mathfrak{m}}). \end{equation}
 It follows from (\ref{mullit}), (\ref{getit}) and (\ref{hike}) that $\mathrm{ht}(P_{\mathfrak{m}}) = j_{R_{\mathfrak{m}}}(R_{\mathfrak{m}}/P_{\mathfrak{m}})$. Since Theorem \ref{catenary}(ii) ensures that $\mathrm{Kdim}(R_{\mathfrak{m}}) = \mathrm{Kdim}(R_{\mathfrak{m}}/P_{\mathfrak{m}}) + \mathrm{ht}(P_{\mathfrak{m}})$, we obtain (\ref{delmacloc}) for $M = R/P$ and for $\mathfrak{m}$. Thus (ii) follows from Lemma \ref{KCMtest}.

 (ii)$\Rightarrow$(i): Suppose that $R$ is equicodimensional and locally Krull Macaulay. By Lemma \ref{KCMtest}, we must show that, for every prime $P$ of $R$,
 \begin{equation} \label{fate} \mathrm{Kdim}(R) = \mathrm{Kdim}(R/P) + j_R(R/P),
 \end{equation}
 where $j_R$ denotes both the left and the right grade. Let $\mathfrak{m}$ be a maximal ideal of $Z(R)$. By the already-proved ${\rm(i)}\Rightarrow{\rm(iv)}\Rightarrow{\rm(iii)}$ of the theorem applied to the Krull Macaulay ring $R_{\mathfrak{m}}$, we see that $R_{\mathfrak{m}}$ is $Z(R)_{\mathfrak{m}}$-Macaulay. Since $\mathfrak{m}$ was arbitrary, Theorem \ref{CMprops}(iv) ensures that $R$ is $Z(R)$-Macaulay. Bearing in mind that $R$ is also by hypothesis equicodimensional, Theorem \ref{chains}(iii) implies that, for all primes $P$ of $R$,
 \begin{equation}\label{cravat} \mathrm{Kdim}(R) = \mathrm{Kdim}(R/P) + \mathrm{ht}(P). \end{equation}
 By (\ref{cravat}), (\ref{fate}) holds if and only if,  for all primes $P$ of $R$,
 \begin{equation} \label{cod} j_R (R/P) = \mathrm{ht}(P). \end{equation}
To prove (\ref{cod}) for $j_R^{\ell}$, note first that
\begin{equation}\label{sole}j_R^{\ell} (R/P) \geq G_{Z(R)}(\mathfrak{p},R) = \mathrm{ht}(P) = \mathrm{ht}(\mathfrak{p}), \end{equation}
by Theorem \ref{CMprops}(ii) and Lemma \ref{grades}(i). For the opposite inequality, by Lemma \ref{grades}(iii) there is a prime $Q$ of $R$ with $Q \cap Z(R) = \mathfrak{p}$, such that
\begin{equation} \label{lone} t:= j_R^{\ell}(R/Q) = G_{Z(R)}(\mathfrak{p}, R) = \mathrm{ht} (Q) = \mathrm{ht}(\mathfrak{p}), \end{equation}
where for the final two equalities we again use Theorem \ref{CMprops}(ii), applicable since $R$ is $Z(R)$-Macaulay.
By Lemma \ref{grades}(iv) $\mathrm{Ext}^t_R (R/Q,R)$ is a torsion-free $Z(R)/\mathfrak{q}$-module. Thus, letting $\mathfrak{m}$ be any maximal ideal of $Z(R)$ which contains $\mathfrak{p}$, we deduce that $j^{\ell}_{R_{\mathfrak{m}}}(R_{\mathfrak{m}}/Q_{\mathfrak{m}}) = t$. Now local grade symmetry, which holds thanks to hypothesis (ii), forces $j^{\ell}_{R_{\mathfrak{m}}}(R_{\mathfrak{m}}/P_{\mathfrak{m}}) = t$, by Lemma \ref{muller}(ii). Therefore
\begin{equation} \label{cinch} j_R^{\ell}(R/P) \leq t. \end{equation}
Now (\ref{cod}) for $j_R^{\ell}$ follows from (\ref{sole}), (\ref{cinch}) and (\ref{lone}). The argument for $j_R^r$ is identical.
\end{proof}

We now show that the characterisation in Lemma \ref{muller}(ii) of $j_R (R/P)$, obtained there for a prime ideal $P$ of a grade symmetric $Z(R)$-Macaulay ring, extends naturally to $j_R(X)$ for all finitely generated $R$-modules $X$.

\begin{theorem}\label{gradequiv}

Let $R$ be a ring which is a finite module over its noetherian centre $Z(R)$. Suppose that $R$ is $Z(R)$-Macaulay. Then the following are equivalent.

{\rm(i)} $R$ is grade symmetric.

{\rm(ii)} For all prime ideals $P$ of $R$, $j_R^{\ell}(R/P) = j_R^{r}(R/P) = G_{Z(R)}(\mathfrak{p}, R)$.

{\rm(iii)} For every finitely generated left $R$-module $X$,
\begin{equation}\label{value} j_R^{\ell}(X) = \mathrm{inf}\{G_{Z(R)}(\mathfrak{p}, R) : \mathfrak{p} \textit{ an annihilator prime of } \,_{Z(R)}X\};\end{equation}
and similarly for finitely generated right $R$-modules.

{\rm(iv)} For every prime ideal $P$ of $R$, and for some (equivalently, for every) maximal $Z(R)$-sequence $\{x_1, \ldots , x_t \}$ in $P$, $P$ is a left and a right annihilator prime of $R/\sum_{i}x_iR$.

\end{theorem}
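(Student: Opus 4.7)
The plan is to prove the cycle (i)$\Rightarrow$(ii)$\Rightarrow$(iii)$\Rightarrow$(i) together with the equivalence (ii)$\Leftrightarrow$(iv). The implication (i)$\Rightarrow$(ii) is already contained in Lemma~\ref{muller}(ii). For (ii)$\Leftrightarrow$(iv), fix a prime $P$ of $R$, set $\mathfrak{p} := P \cap Z(R)$ and $t := G_{Z(R)}(\mathfrak{p},R)$, let $\{x_1,\ldots,x_t\}$ be a maximal $Z(R)$-sequence in $\mathfrak{p}$, and write $I := \sum_i x_i R$. The Change of Rings formula $\operatorname{Ext}_R^t(R/P,R) \cong \operatorname{Hom}_{R/I}(R/P,R/I)$ and its right-module analogue---already exploited in the proofs of Lemma~\ref{grades}(i) and (iii)---show that non-vanishing of $\operatorname{Ext}_R^t(R/P,R)$ on the left (resp.\ right) is equivalent to $P$ being a left (resp.\ right) annihilator prime of $R/I$. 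In view of Lemma~\ref{grades}(i), this is in turn equivalent to $j_R^\ell(R/P)=t$ (resp.\ $j_R^r(R/P)=t$). Since $t$ and the relevant Ext groups depend only on $\mathfrak{p}$ and not on the chosen maximal sequence, the ``some equivalently every'' clause is automatic.

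For (ii)$\Rightarrow$(iii), let $X$ be a finitely generated left $R$-module and take a generalised composition series
\[ 0 = X_0 \subset X_1 \subset \cdots \subset X_r = X \]
as in \cite[Theorem 9.6]{GW}, so that each factor $X_j/X_{j-1}$ is a uniform left ideal of $R/P_j$ for some prime $P_j$ of $R$. Hypothesis (ii) combined with Lemma~\ref{grades}(v) gives $j_R^\ell(X_j/X_{j-1}) = G_{Z(R)}(\mathfrak{p}_j,R)$ for each $j$. Stitching the long exact sequences of Ext through the filtration then yields
\[ j_R^\ell(X) \; = \; \min_j G_{Z(R)}(\mathfrak{p}_j,R), \]
with the lower bound from Ext vanishing below the minimum and the upper bound from non-vanishing at the step where the minimum is realised, mirroring the end of the proof of Lemma~\ref{KCMtest}. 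To finish, one identifies this minimum with the infimum appearing in (\ref{value}): each $R$-associated prime $P$ of $X$ produces the $Z(R)$-associated prime $\mathfrak{p} := P \cap Z(R)$ by passing to a uniform $R$-submodule annihilated by $P$, which is torsion-free over the domain $Z(R)/\mathfrak{p}$; conversely every $Z(R)$-associated prime arises in this manner. Finally, (iii)$\Rightarrow$(i) is immediate: for a finitely generated central $R$-bimodule $M$ the left and right $Z(R)$-module structures coincide, so ${}_{Z(R)}M$ and $M_{Z(R)}$ share the same associated primes, and two applications of (iii) give $j_R^\ell(M) = j_R^r(M)$.

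I expect the main obstacle to lie in the last step of (ii)$\Rightarrow$(iii): verifying that the set $\{P_j \cap Z(R)\}$ harvested from the generalised composition series coincides exactly with the set of associated primes of ${}_{Z(R)}X$, so that spurious contributions to the minimum---in particular from the bottom piece $X_1$, which may have strictly smaller Krull dimension---are ruled out. The Ext bookkeeping along the filtration is routine once Lemma~\ref{grades}(v) is available, but this identification of associated primes across the commutative/noncommutative divide is the point where the hypothesis that $R$ is finite over $Z(R)$ really has to be used.
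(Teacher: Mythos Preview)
Your overall architecture matches the paper's: (iii)$\Rightarrow$(i) is immediate, (i)$\Rightarrow$(ii) is Lemma~\ref{muller}(ii), and (ii)$\Leftrightarrow$(iv) follows from the Change of Rings isomorphism just as you say. The only substantive step is (ii)$\Rightarrow$(iii), and here your argument has a genuine gap.

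The problem is the ``stitching'' step. From a filtration $0=X_0\subset X_1\subset\cdots\subset X_r=X$ with $j_R^\ell(X_j/X_{j-1})=G_{Z(R)}(\mathfrak{p}_j,R)$ you certainly get the lower bound $j_R^\ell(X)\ge\min_j G_{Z(R)}(\mathfrak{p}_j,R)$. But the upper bound does \emph{not} follow by routine Ext bookkeeping: if the minimum $m$ is attained at a factor which is not at the top, the sequence
\[
\operatorname{Ext}^m_R(X/X_{r-1},R)\to\operatorname{Ext}^m_R(X,R)\to\operatorname{Ext}^m_R(X_{r-1},R)\to\operatorname{Ext}^{m+1}_R(X/X_{r-1},R)
\]
may well have first term zero and last map injective, forcing $\operatorname{Ext}^m_R(X,R)=0$. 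This is exactly why in Lemma~\ref{KCMtest} the argument works only because the top factor already achieves the minimal grade; you cannot simply ``mirror'' that computation for an arbitrary generalised composition series.

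The paper repairs this by an additional manoeuvre: it first identifies a $Z(R)$-associated prime $\mathfrak{q}_1$ of $X$ of minimal codimension and then \emph{chooses} the generalised composition series so that the top factor $X/X_{r-1}$ is a uniform left ideal of some $R/P_r$ with $P_r\cap Z(R)=\mathfrak{q}_1$ (this is done by inserting $\mathfrak{q}_1X$ as a term of the chain). With the minimum placed at the top, the Ext calculation goes through. So the difficulty you anticipated is real, but it is not about excluding ``spurious'' primes from the list $\{\mathfrak{p}_j\}$; it is about being free to reorder the filtration so that a minimum-grade factor sits on top. Once that is arranged, the identification with the $Z(R)$-associated primes of $X$ is automatic, since $\mathfrak{q}_1$ was chosen to be one.
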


\begin{proof} {\rm(iii)}$ \Rightarrow${\rm(i)}: This is clear since grade symmetry is defined with respect to central bimodules.

{\rm(i)}$ \Rightarrow ${\rm(ii)}: Lemma \ref{muller}(ii) and Theorem \ref{CMprops}(ii).

{\rm(ii)}$ \Rightarrow${\rm(iii)}: Suppose that (ii) holds. We prove (iii) for \emph{left} $R$-modules $X$; the argument on the right is parallel. We argue by induction on the Krull dimension $\mathrm{Kdim}(X)$ of the module $X$. Suppose initially that $X$ is artinian as $R$-module, and hence also as $Z(R)$-module. Amongst the annihilator primes of $X$ as $Z(R)$-module, which are all maximal ideals of $Z(R)$, let $\mathfrak{m}$ be one of minimal codimension, $t$ say. Since $R$ is $Z(R)$-Macaulay, we have to show that $j_R^{\ell}(X) = t.$ For this, we induct on the composition length $m$ of $X$, the case $m = 1$ being given by (ii). By standard properties of modules over commutative artinian rings, $\mathfrak{m}X \subsetneqq X$, and so there is an exact sequence of $R$-modules
$$ 0 \rightarrow B\rightarrow X \rightarrow Y \rightarrow 0, $$
with $Y$ a simple $R$-module with $\mathfrak{m}Y = 0.$ For $i < t$ there is an exact sequence
$$ \mathrm{Ext}^i(Y,R)\rightarrow \mathrm{Ext}^i(X,R)\rightarrow \mathrm{Ext}^i(B,R) $$
whose two outer terms are 0, by (ii) and induction on $m$ respectively. Thus $j_R^{\ell}(X) \geq t.$ Now consider the exact sequence
$$ \mathrm{Ext}^{t-1}(B,R)\rightarrow\mathrm{Ext}^t(Y,R)\rightarrow\mathrm{Ext}^t(X,R).$$
Since, by choice of $Y$, $\mathrm{ht}(\mathfrak{n}) \geq \mathrm{ht}(\mathfrak{m})$ for all annihilator primes $\mathfrak{n}$ of $B$, the left-hand term above is 0 by the induction on $m$. The middle term is non-zero by (ii), so $\mathrm{Ext}_R^t(X,R) \neq 0$, proving the $m$-induction step for the artinian case of (iii).

Now suppose that $r > 0$, that $X$ has Krull dimension $r$, and that (iii) has been proved for all finitely generated $R$-modules whose Krull dimension is less than $r$. We consider first a special case: namely, let $P$ be a prime ideal of $R$ with $\mathrm{Kdim}(R/P) \leq r$, and let $U$ be a uniform left ideal of $R/P$. Then we claim that
\begin{equation}\label{uniform}   j_R^{\ell}(U) = G_{Z(R)}(\mathfrak{p},R) = t.
\end{equation}

Let $s$ be the uniform dimension of $R/P$, so that, by \cite[Propositions 7.24 and 7.8(c)]{GW}, there is an exact sequence of left $R$-modules
\begin{equation}\label{squeeze} 0 \rightarrow U^{\oplus s}\rightarrow R/P \rightarrow Y \rightarrow 0,
\end{equation}
where $Y$ is a torsion left $R/P$-module. In particular, $\mathrm{Ann}_{R}(Y) \supsetneqq P$, by \cite[Lemma 9.2]{GW}, so that $\mathrm{Kdim}(Y) < \mathrm{Kdim}(R/P) \leq r$, by \cite[Proposition 6.3.11(ii)]{MR}. This means that our induction hypothesis (on $r$) applies to $Y$. Let $\mathfrak{q}$ be an annihilator prime of $\,_{Z(R)}Y$. By INC Proposition \ref{INCetc}(iii), $\mathfrak{q} \supseteq \mathrm{Ann}_{R}(Y) \cap Z(R) \supsetneqq \mathfrak{p}$. So by induction and Theorem \ref{CMprops}(ii),
\begin{equation} \label{suck} j_R^{\ell}(Y) = \mathrm{inf}\{G_{Z(R)}(\mathfrak{q}, R) : \mathfrak{q} \textit{ an annihilator prime of } \,_{Z(R)}Y\} > t. \end{equation}
From (\ref{suck}) and the long exact sequence of $\mathrm{Ext}$ applied to (\ref{squeeze}) we obtain the exact sequence
$$  0 = \mathrm{Ext}^t(Y,R)\rightarrow  \mathrm{Ext}^t(R/P,R)\rightarrow  \mathrm{Ext}^t(U^{\oplus s},R). $$
Noting the definition (\ref{uniform}) of $t$, along with hypothesis (ii), this sequence shows that $j_R^{\ell}(U) \leq t.$ On the other hand, for all $i$ with $0 \leq i < t$, we have the exact sequence
\begin{equation} \label{see} \mathrm{Ext}^i(R/P,R)\rightarrow \mathrm{Ext}^i(U^{\oplus s},R)\rightarrow \mathrm{Ext}^{i+1}(Y,R). \end{equation}
Here, $i+1 \leq t < j_R^{\ell}(Y)$ by (\ref{suck}), so both outer terms in (\ref{see}) are 0, and hence $j_R^{\ell}(U) \geq t.$ Thus (\ref{uniform}) is proved.

Returning now to our arbitrary finitely generated $R$-module $X$ with $\mathrm{Kdim}(X) = r$, we first note that, by \cite[Theorem 9.6]{GW}, since $R$ is a finite module over its centre and thus FBN, $X$ has a finite chain of submodules
\begin{equation}\label{grow} 0 = X_0 \subset X_1 \subset \cdots \subset X_m = X,
\end{equation}
where, for $i = 1, \ldots , m$, $X_i/X_{i-1}$ is isomorphic to a uniform left ideal of a prime factor ring $R/P_i$ of $R$. Moreover, $\mathrm{Kdim}(R/P_i)=\mathrm{Kdim}(U_i)\leq r$ for all $i$, and the number $w$ of subfactors in (\ref{grow}) with Krull dimension precisely $r$ is easily checked to be an invariant of $X$. We shall prove the result for $X$ by induction on $w$, although observe that the case $w = 1$ is not yet proved.

Let $\mathfrak{i} = \mathrm{Ann}_{Z(R)}(X)$, and let $\mathfrak{q}_1, \ldots , \mathfrak{q}_c$ be the minimal primes over $\mathfrak{i}$. Let $\mathfrak{q}_1$ have minimal codimension amongst $\{\mathfrak{q}_j : 1 \leq j \leq c \}$, and set $\mathrm{ht}(\mathfrak{q}_1) = t$. By a straightforward exercise on the structure of finitely generated modules over commutative noetherian rings, $\mathrm{Ann}_{Z(R)}(X/\mathfrak{q}_jX) = \mathfrak{q}_j$ for all $j = 1, \ldots , c.$ Thus, building a chain (\ref{grow}) with the $R$-module $\mathfrak{q}_1X$ occurring as one of the terms, we easily see that we can assume that $P_m = \mathrm{Ann}_R(X/X_{m-1})$ satisfies $P_m \cap Z(R) = \mathfrak{q}_1.$ By (\ref{uniform}),
\begin{equation} \label{killer} j_R^{\ell}(X/X_{m-1}) = t. \end{equation}

The case $w = 1$ is now precisely the case where $\mathrm{Kdim}(X_{m-1}) < r$. In this case, by our induction on $r$ and our choice of $\mathfrak{q}_1$,
\begin{equation}\label{hip} j_R^{\ell}(X_{m-1}) = \mathrm{inf}\{G_{Z(R)}(\mathfrak{p}, R) : \mathfrak{p} \textit{ an annihilator prime of } \,_{Z(R)}X_{m-1}\} \geq t.\end{equation}
Then the exact sequence
\begin{equation} \label{blow}\mathrm{Ext}_R^{t-1}(X_{m-1},R)\rightarrow\mathrm{Ext}_R^t(X/X_{m-1},R)\rightarrow\mathrm{Ext}_R^t(X,R)\rightarrow\mathrm{Ext}_R^t(X_{m-1},R)
\end{equation}
has first term 0 and second term non-zero, by (\ref{hip}) and (\ref{uniform}). Thus $j_R^{\ell}(X) \leq t$; and the fact that $j_R^{\ell}(X) \geq t$ is clear from (\ref{blow}) with $t$ replaced by $i$, where $i < t$. Finally, the induction step for $w$ is covered by an exactly similar argument, where $X_{m-1}$ is handled by induction on $w$ rather than induction on $r$. This completes the proof of (iii).

{\rm(ii)}$\Leftrightarrow$ {\rm(iv)}: This follows from Rees's Change of Rings Theorem, \cite[Theorem 9.37]{R}. In more detail, for (iv)$\Rightarrow$(ii), the result is immediate from (\ref{change}) with $m=0$ and $I = \sum_i Rx_i.$ For the converse, assume (ii), and let $P$, $x_1, \ldots , x_t$ be as in (iv). Set $I = \sum_i Rx_i.$ Then, arguing on the left, $\mathrm{Ext}^t_R(R/P,R) \neq 0$ by (ii), so
\begin{equation}\label{lick}\mathrm{Hom}_R(R/P, R/I) \neq 0
\end{equation}
by (\ref{change}). But $R/I$ has an artinian quotient ring by Theorem \ref{CMprops}(vi) and (vii), so in particular all the (left) annihilator primes of $R/I$ are minimal over $I$. Hence (\ref{lick}) forces $P$ to be a left annihilator prime of $R/I$. A similar argument works on the right. This proves (iv).
\end{proof}

Recall that all quotients of $R$ of the form $R/\sum_i x_iR$, where $\{x_1, \ldots , x_t \}$ is a $Z(R)$-sequence, have artinian quotient rings $Q(R/\sum_i x_iR)$, by Theorem \ref{CMprops}(vi) and (vii). Therefore the condition on $R$ stated as Theorem \ref{gradequiv}(iv) is equivalent to requiring that, for every $Z(R)$-sequence $\{x_1, \ldots , x_t \}$ in $R$, every simple left [resp., right] $Q(R/\sum_i x_iR)$-module occurs as a left [resp., right] ideal of $Q(R/\sum_i x_iR)$. Artinian rings with this property are called \emph{(right and left) Kasch rings}, in honour of their definition and initial study in \cite{K}.

The following result, Corollary \ref{local}, shows that the property of being $Z(R)$-Macaulay grade symmetric exhibits a standard local-global condition. Observe that it follows from it that a $Z(R)$-Macaulay ring $R$ is grade symmetric if and only if all its factors by maximal $Z(R)$-sequnces are right and left Kasch rings.

\begin{corollary} \label{local} Let $R$ be a ring which is a finite module over its noetherian centre $Z(R)$.

 {\rm(i)} Let $P$ be a prime ideal of $R$, $\mathfrak{p} = P \cap R$. If $R$ is $Z(R)$-Macaulay and grade symmetric, then $R_{\mathfrak{p}}$ is $Z(R_{\mathfrak{p}})$-Macaulay and grade symmetric.

 {\rm(ii)} Suppose that, for every maximal ideal $\mathfrak{m}$ of $Z(R)$, $R_{\mathfrak{m}}$ is $Z(R_{\mathfrak{m}})$-Macaulay and grade symmetric, then $R$ is $Z(R)$-Macaulay and grade symmetric.
\end{corollary}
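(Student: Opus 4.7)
My plan is to deduce both statements from the characterisation of grade symmetric $Z(R)$-Macaulay rings in Theorem \ref{gradequiv}(ii), by tracking the behaviour of the relevant $\mathrm{Ext}$-groups under central localisation. The Macaulay halves of (i) and (ii) are already supplied by Theorem \ref{CMprops}(iii) and (iv) respectively, so in each part the genuine content is the propagation of grade symmetry.

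For (i), suppose $R$ is $Z(R)$-Macaulay and grade symmetric, and let $P' = Q_{\mathfrak{p}}$ be a prime of $R_{\mathfrak{p}}$, with $Q$ a prime of $R$ satisfying $\mathfrak{q} := Q\cap Z(R) \subseteq \mathfrak{p}$. Set $t := G_{Z(R)}(\mathfrak{q}, R)$. Theorem \ref{gradequiv}(ii) gives $j^{\ell}_R(R/Q) = j^{r}_R(R/Q) = t$, and Lemma \ref{grades}(iv) makes $\mathrm{Ext}^t_R(R/Q, R)$ torsion-free as a $Z(R)/\mathfrak{q}$-module on each side. Flatness of central localisation identifies $\mathrm{Ext}^i_R(R/Q, R)_{\mathfrak{p}}$ with $\mathrm{Ext}^i_{R_{\mathfrak{p}}}(R_{\mathfrak{p}}/Q_{\mathfrak{p}}, R_{\mathfrak{p}})$; the lower Ext's vanish because the global ones do, and the degree-$t$ Ext survives because $\mathfrak{q} \subseteq \mathfrak{p}$ and the module is $Z(R)/\mathfrak{q}$-torsion-free. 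Combining with $G_{Z(R)_{\mathfrak{p}}}(\mathfrak{q}_{\mathfrak{p}}, R_{\mathfrak{p}}) = G_{Z(R)}(\mathfrak{q}, R) = t$, which follows from equality throughout (\ref{firstoff}) in the $Z(R)$-Macaulay setting, establishes the Theorem \ref{gradequiv}(ii) criterion for $R_{\mathfrak{p}}$. For (ii), fix a prime $P$ of $R$ with $\mathfrak{p} := P \cap Z(R)$ and $t := G_{Z(R)}(\mathfrak{p}, R)$, and take any maximal $\mathfrak{m} \supseteq \mathfrak{p}$ in $Z(R)$. The local hypothesis and preservation of $Z(R)$-sequences under localisation give $j^{\ell}_{R_{\mathfrak{m}}}(R_{\mathfrak{m}}/P_{\mathfrak{m}}) = j^{r}_{R_{\mathfrak{m}}}(R_{\mathfrak{m}}/P_{\mathfrak{m}}) = t$. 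Since each $\mathrm{Ext}^i_R(R/P, R)$ is annihilated by $\mathfrak{p}$ (via the $R/P$ variable) and so supported on $V(\mathfrak{p})$, its vanishing may be tested at maximal ideals $\mathfrak{m} \supseteq \mathfrak{p}$; flatness then lifts local vanishing for $i<t$ and local non-vanishing at $i = t$ to the global statement $j^{\ell}_R(R/P) = j^{r}_R(R/P) = t$, yielding grade symmetry for $R$ via Theorem \ref{gradequiv}(ii).

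The main hurdle is the non-vanishing step in part (i): one must show that localising $\mathrm{Ext}^t_R(R/Q, R)$ at $\mathfrak{p}$, which may be a strictly larger prime than $\mathfrak{q}$ and thus need not contain any maximal $Z(R)$-sequence drawn from $\mathfrak{q}$, still leaves a non-zero module. This is precisely what the $Z(R)/\mathfrak{q}$-torsion-freeness of Lemma \ref{grades}(iv) secures, and it depends essentially on the $Z(R)$-Macaulay hypothesis. Once this point is in hand, both claims reduce to routine bookkeeping with flat base change and the Theorem \ref{gradequiv}(ii) criterion.
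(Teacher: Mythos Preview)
Your argument is correct and follows essentially the paper's own route: verify criterion (ii) of Theorem~\ref{gradequiv} by tracking $\mathrm{Ext}$-groups under flat base change, with the torsion-freeness from Lemma~\ref{grades}(iv) (equivalently Lemma~\ref{muller}(ii)) supplying the non-vanishing step in part~(i). Two small points of precision: your appeal to (\ref{firstoff}) in part~(i) does not literally apply since you localise at $\mathfrak{p}\supseteq\mathfrak{q}$ rather than at $\mathfrak{q}$ itself---the paper instead squeezes $G_{Z(R_{\mathfrak{p}})}(\mathfrak{q}_{\mathfrak{p}},R_{\mathfrak{p}})$ between $t$ and $j_{R_{\mathfrak{p}}}^{\ell}$ via Lemma~\ref{grades}(i) and \cite[Lemma 18.1]{E}; and in part~(ii), preservation of sequences alone gives only $G_{Z(R)_{\mathfrak{m}}}(\mathfrak{p}_{\mathfrak{m}},R_{\mathfrak{m}})\geq t$, so to get equality at an arbitrary $\mathfrak{m}\supseteq\mathfrak{p}$ you must also invoke the local Macaulay hypothesis via Theorem~\ref{CMprops}(ii) (the paper sidesteps this by using \cite[Lemma 18.1]{E} to select one particular $\mathfrak{m}$ where the local grade is exactly $t$).
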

\begin{proof} (i) Suppose that $R$ is $Z(R)$-Macaulay and grade symmetric, and let $R$, $P$ and $\mathfrak{p}$ be as stated. Let $\widehat{Q}$ be a prime ideal of $R_{\mathfrak{p}}$, so $\widehat{Q} = Q_{\mathfrak{p}}$ for a prime $Q$ of $R$ with $Q \cap \{Z(R) \setminus \mathfrak{p}\} = \emptyset.$ Let $t := G_{Z(R)}(\mathfrak{q},R) = j_R^{\ell}(R/Q)$, the second equality by (i)$\Rightarrow$(ii) of Theorem \ref{gradequiv}. Now $\mathrm{Ext}_R^t(R/Q,R)$ is $R/Q$-torsion-free by Lemma \ref{muller}(ii). Localising at $\mathfrak{q}$, we deduce that
\begin{equation}\label{burn} j_{R_{\mathfrak{p}}}^{\ell}(R_{\mathfrak{p}}/\widehat{Q}) =  t  \leq
G_{Z(R_{\mathfrak{p}})}(\mathfrak{q}_{\mathfrak{p}}, R_{\mathfrak{p}}),\end{equation}
the second inequality by \cite[Lemma 18.1]{E}. Since $j_{R_{\mathfrak{p}}}^{\ell}(R_{\mathfrak{p}}/\widehat{Q}) \geq G_{Z(R_{\mathfrak{p}})}(\mathfrak{q}_{\mathfrak{p}}, R_{\mathfrak{p}})$ by Lemma \ref{grades}(i), equality holds in (\ref{burn}). That is, $R_{\mathfrak{p}}$ satisfies condition (ii) of Theorem \ref{gradequiv}. Since $R_{\mathfrak{p}}$ is $Z(R)_{\mathfrak{p}}$-Macaulay by Theorem \ref{CMprops}(iii), it is also grade symmetric by (ii)$\Rightarrow$(i) of Theorem \ref{gradequiv}.

(ii) Suppose the stated hypotheses hold. Then $R$ is $Z(R)$-Macaulay by Theorem \ref{CMprops}(iv). Therefore, by (ii) of Theorem \ref{gradequiv}, it is enough to prove that, given a prime ideal $P$ of $R$,
\begin{equation}\label{ghee}  j_R^{\ell}(R/P) = G_{Z(R)}(\mathfrak{p}, R). \end{equation}
So, let $P$ be a prime of $R$, and set $t := G_{Z(R)}(\mathfrak{p}, R)$. Lemma \ref{grades}(i) ensures that $j_R^{\ell}(R/P) \geq t.$ By \cite[Lemma 18.1]{E}, there is a maximal ideal $\mathfrak{m}$ of $Z(R)$ with $\mathfrak{p} \subseteq \mathfrak{m},$  such that $G_{Z(R_{\mathfrak{m}})}(\mathfrak{p}_{\mathfrak{m}}, R_{\mathfrak{m}}) = t.$ Hence, by (ii) of Theorem \ref{gradequiv} applied to $R_{\mathfrak{m}}$,
$$ 0 \neq \mathrm{Ext}_{R_{\mathfrak{m}}}^t(R_{\mathfrak{m}}/P_{\mathfrak{m}}
,R_{\mathfrak{m}}) \cong \mathrm{Ext}_R^t(R/P,R) \otimes R_{\mathfrak{m}}. $$
Thus $j^{\ell}_R(R/P) \leq t$, and (\ref{ghee}) is proved.
\end{proof}

We can now deduce that, for a grade symmetric centrally Macaulay ring $R$, the homological grade of finitely generated $R$-modules is entirely a function of their $Z(R)$-structure.

\begin{corollary} \label{carlisle} Let $R$ be a ring which is a finite module over its noetherian centre $Z(R)$, and let $X$ be a finitely generated left $R$-module. Suppose that $R$ is $Z(R)$-Macaulay and grade symmetric. Then
\begin{align*} j_R^{\ell}(X) &= j_R^{\ell}(R/\mathrm{Ann}_R(X)) \\
&= \mathrm{inf}\{G_{Z(R)}(\mathfrak{p}, R): P \textit{ minimal prime over } \mathrm{Ann}_R(X) \} \\
&= \mathrm{inf}\{\mathrm{ht}(\mathfrak{p}): \mathfrak{p} \textit{ minimal prime over } \mathrm{Ann}_{Z(R)}(X) \}.
\end{align*}
\end{corollary}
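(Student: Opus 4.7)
The plan is to apply Theorem~\ref{gradequiv}(iii) to both $X$ and $R/\mathrm{Ann}_R(X)$ and then to repackage the resulting infima in terms of minimal primes over annihilators. Directly from Theorem~\ref{gradequiv}(iii),
\begin{align*}
j_R^{\ell}(X) &= \inf\{G_{Z(R)}(\mathfrak{p}, R) : \mathfrak{p} \text{ an annihilator prime of } \,_{Z(R)}X\}, \\
j_R^{\ell}(R/\mathrm{Ann}_R(X)) &= \inf\{G_{Z(R)}(\mathfrak{p}, R) : \mathfrak{p} \text{ an annihilator prime of } \,_{Z(R)}(R/\mathrm{Ann}_R(X))\}.
\end{align*}
The key observation is $\mathrm{Ann}_{Z(R)}(R/\mathrm{Ann}_R(X)) = \mathrm{Ann}_R(X) \cap Z(R) = \mathrm{Ann}_{Z(R)}(X)$, so the two finitely generated $Z(R)$-modules share the same annihilator.

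Next, I would invoke the standard commutative fact that the minimal elements of the set of associated primes of a finitely generated module over a commutative noetherian ring coincide with the minimal primes over its annihilator. Since $R$ is $Z(R)$-Macaulay, Theorem~\ref{CMprops}(ii) (combined with Lying Over, Proposition~\ref{INCetc}(i), so that every prime of $Z(R)$ is a contraction from $R$) gives $G_{Z(R)}(\mathfrak{p}, R) = \mathrm{ht}(\mathfrak{p})$ for all primes $\mathfrak{p}$ of $Z(R)$. Because $\mathrm{ht}$ is monotone under inclusion, the infimum in each of the two displayed expressions is attained on the minimal associated primes, i.e., on the minimal primes over $\mathrm{Ann}_{Z(R)}(X)$. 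This yields
$$j_R^{\ell}(X) = j_R^{\ell}(R/\mathrm{Ann}_R(X)) = \inf\{\mathrm{ht}(\mathfrak{p}) : \mathfrak{p} \text{ minimal prime over } \mathrm{Ann}_{Z(R)}(X)\},$$
accounting for three of the four claimed equalities.

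It remains to equate the second-listed expression, $\inf\{G_{Z(R)}(P \cap Z(R), R) : P \text{ minimal over } \mathrm{Ann}_R(X)\}$, with this common value. One direction is immediate: any such $P$ satisfies $P \cap Z(R) \supseteq \mathrm{Ann}_{Z(R)}(X)$, so it contains a minimal prime $\mathfrak{q}$ of $Z(R)$ over $\mathrm{Ann}_{Z(R)}(X)$, and monotonicity of $\mathrm{ht}$ gives the bound. For the reverse, given a minimal prime $\mathfrak{q}$ over $\mathrm{Ann}_{Z(R)}(X)$, I would apply Lying Over to the central extension $Z(R)/\mathrm{Ann}_{Z(R)}(X) \hookrightarrow R/\mathrm{Ann}_R(X)$ to obtain a prime lying over $\mathfrak{q}/\mathrm{Ann}_{Z(R)}(X)$; shrinking it to a minimal prime in that quotient and noting that its contraction sits inside the minimal prime $\mathfrak{q}/\mathrm{Ann}_{Z(R)}(X)$, equality is forced by minimality. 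Lifting back to $R$ produces a minimal prime $P$ over $\mathrm{Ann}_R(X)$ with $P \cap Z(R) = \mathfrak{q}$. The main subtlety is precisely at this step: one should \emph{not} expect the contraction of every minimal prime over $\mathrm{Ann}_R(X)$ to be minimal over $\mathrm{Ann}_{Z(R)}(X)$, but the Lying-Over argument above shows that equality of infima is all that is needed.
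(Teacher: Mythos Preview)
Your proposal is correct and follows the same overall strategy as the paper: both proofs hinge on applying Theorem~\ref{gradequiv}(iii) to $X$ and to $R/\mathrm{Ann}_R(X)$, and then reinterpreting the resulting infimum over associated primes of the $Z(R)$-module in terms of minimal primes over the annihilator, using that $G_{Z(R)}(\mathfrak{p},R)=\mathrm{ht}(\mathfrak{p})$ by Theorem~\ref{CMprops}(ii).

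The one place where your route diverges from the paper is in linking the third displayed line to the second. The paper argues inside $X$: given a minimal prime $\mathfrak{q}$ over $\mathfrak{i}=\mathrm{Ann}_{Z(R)}(X)$, it produces an $R$-submodule $Y$ of $X$ with $\mathrm{Ann}_{Z(R)}(Y)=\mathfrak{q}$, then studies the $R$-annihilator primes of $Y$ and uses the Artin--Rees property for centrally generated ideals to force one of them, say $Q_j$, to satisfy $Q_j\cap Z(R)=\mathfrak{q}$; minimality of $Q_j$ over $\mathrm{Ann}_R(X)$ then follows from Incomparability. Your argument is more direct: you work in the quotient $R/\mathrm{Ann}_R(X)$, apply Lying Over to the central extension $Z(R)/\mathrm{Ann}_{Z(R)}(X)\hookrightarrow R/\mathrm{Ann}_R(X)$, shrink to a minimal prime, and use minimality of $\mathfrak{q}$ to recover $P\cap Z(R)=\mathfrak{q}$. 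This is shorter and avoids the Artin--Rees step entirely; the paper's argument, on the other hand, yields the slightly stronger intermediate statement that such a $Q_j$ can be found among the $R$-annihilator primes of a submodule of $X$, which is not needed for the corollary itself.
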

\begin{proof} Let $R$ and $X$ be as stated. We start by showing that the right hand side of (\ref{value}), which we set as $t$, is equal to the second and third displayed lines in the corollary. Set $I = \mathrm{Ann}_R(X)$ and $\mathfrak{i} = I \cap Z(R).$ Let $\mathfrak{q}$ be a minimal prime over $\mathfrak{i}$, and let $\mathfrak{p}_1, \dots , \mathfrak{p}_r$ be the annihilator primes of $\,_{Z(R)}X.$ By basic properties of finitely generated $Z(R)$-modules, in particular the Artin-Rees lemma \cite[Theorem 13.3]{GW}, some finite product of the primes $\mathfrak{p}_1, \dots , \mathfrak{p}_r$ is contained in $\mathfrak{i}$. Hence, there exists $j$ for which $\mathfrak{p}_j \subseteq \mathfrak{q}$. By minimality of $\mathfrak{q}$ we deduce that $\mathfrak{p}_j = \mathfrak{q}$. Thus every minimal prime over $\mathfrak{i}$ is an annihilator prime of $\,_{Z(R)}X$. Since $G(\mathfrak{p},R) \geq G(\mathfrak{q},R)$ if $\mathfrak{p} \supseteq \mathfrak{q}$, $t$ must be attained as $G(\mathfrak{q},R)$ for a prime $\mathfrak{q}$ minimal over $\mathfrak{i}$; that is, $t$ equals the third displayed line.

Let $\mathfrak{q}$ be any minimal prime over $\mathfrak{i}$. By the argument above, $\mathfrak{q} = \mathrm{Ann}_{Z(R)}(Y)$ for some $Z(R)$-submodule $Y$ of $X$, and we can take $Y$ to be an $R$-module. The annihilator primes $Q_1, \ldots , Q_m$ in $R$ of $Y$ clearly all contain $\mathfrak{q}R$. Suppose that $\mathfrak{j} := \cap_j Q_j \cap Z(R) \varsupsetneqq \mathfrak{q}.$ Then $A := \mathrm{Ann}_Y(\mathfrak{j}R)$ is an essential $R$-submodule of $Y$, and its annihilator $\mathfrak{j}R$, being centrally generated, satisfies the Artin-Rees property, \cite[Theorem 13.3]{MR}. Hence, some power $\mathfrak{j}^w R$ of it annihilates $Y$, which is impossible since $\mathfrak{j}^w \nsubseteq \mathfrak{q}$. Thus $\cap_j Q_j \cap Z(R) = \mathfrak{q},$ and so there exists $j$ with $Q_j \cap Z(R) = \mathfrak{q}$. Notice finally that $I \subseteq Q_j$; and if $Q_j$ were not minimal over $I$, say $Q_j \varsupsetneqq  P \supseteq I$, then INC, Proposition \ref{INCetc}(iii), ensures $\mathfrak{q} \varsupsetneqq \mathfrak{p} \supseteq \mathfrak{i}$, contradicting minimality of $\mathfrak{q}$ over $\mathfrak{i}$. Therefore $Q_j$ is minimal over $I$, and $t$ equals the second displayed line.

 Now (\ref{value}) applied first to the module $X$, and second to the module $R/\mathrm{Ann}_R(X)$ shows that the homological grade of both of these modules is given by $$\mathrm{inf}\{G_{Z(R)}(\mathfrak{p}, R) : \mathfrak{p} \textit{ an annihilator prime of } \,_{Z(R)}X\}.$$ Combined with the equalities proved above, this proves the corollary.
\end{proof}

It is a routine exercise, which we omit, to deduce the following consequence of Corollary \ref{carlisle}. For the definition of \emph{finitely partitive}, see for example \cite[8.3.17]{MR}. A similar result was obtained for Auslander-Gorenstein noetherian rings by Levasseur in \cite[Proposition 4.5]{Lev}.

\begin{corollary} \label{dim} Let $R$ be a ring which is a finite module over its noetherian centre $Z(R)$, with $R$ grade symmetric, $Z(R)$-Macaulay and with $\mathrm{Kdim}R = t < \infty$. Then  $\delta := t - j_R$ is an exact finitely partitive dimension function on the category of finitely generated $R$-modules.
\end{corollary}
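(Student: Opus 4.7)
The plan is to reduce every axiom to statements about the central support of $X$ via Corollary~\ref{carlisle}, which under our hypotheses yields
$$
 j_R(X) \;=\; \inf\{\,\hgt(\mathfrak{p}) : \mathfrak{p} \text{ a minimal prime over } \mathrm{Ann}_{Z(R)}(X)\,\}
$$
for every non-zero finitely generated (left or right) $R$-module $X$. Since $\mathrm{Kdim}(Z(R)) = \mathrm{Kdim}(R) = t$, all such heights lie in $\{0,\dots,t\}$, so $\delta(X) := t - j_R(X)$ takes non-negative integer values on non-zero modules, and I set $\delta(0) = -\infty$ by convention.

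For the exactness axiom, I consider a short exact sequence $0 \to A \to B \to C \to 0$ of finitely generated $R$-modules. The $Z(R)$-supports satisfy $\mathrm{supp}(B) = \mathrm{supp}(A) \cup \mathrm{supp}(C)$, and since the infimum of $\hgt(\mathfrak{p})$ over any $\mathrm{supp}(X)$ is always attained on the minimal primes of $\mathrm{Ann}_{Z(R)}(X)$, this immediately yields $j_R(B) = \min\{j_R(A), j_R(C)\}$ and hence $\delta(B) = \max\{\delta(A), \delta(C)\}$, as required.

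The main obstacle is finite partitivity, which I handle by localisation. Fix a non-zero $X$ with $\delta(X) = d$, and let $\mathfrak{p}_1, \ldots, \mathfrak{p}_k$ be the finitely many minimal primes over $\mathrm{Ann}_{Z(R)}(X)$ of height exactly $t-d$. For each $j$, $X_{\mathfrak{p}_j}$ is a finitely generated $Z(R)_{\mathfrak{p}_j}$-module whose support is the unique maximal ideal, hence is of some finite length $n_j$ over $R_{\mathfrak{p}_j}$. Given any chain $X = X_0 \supsetneq X_1 \supsetneq \cdots \supsetneq X_r = 0$ with $\delta(X_i/X_{i+1}) = d$ for all $i$, each factor has a minimal prime $\mathfrak{q}_i$ in its support of height $t-d$; since $\mathfrak{q}_i \supseteq \mathrm{Ann}_{Z(R)}(X)$, it must contain some $\mathfrak{p}_j$, and equality of heights forces $\mathfrak{q}_i = \mathfrak{p}_j$. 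Localising the chain at $\mathfrak{p}_j$, the factor $(X_i/X_{i+1})_{\mathfrak{p}_j}$ is non-zero precisely when $\mathfrak{q}_i = \mathfrak{p}_j$, bounding the number of such $i$ by $n_j$. Summing over $j$ gives $r \leq n_1 + \cdots + n_k$, a bound depending only on $X$; no equicodimensionality hypothesis on $R$ is required.
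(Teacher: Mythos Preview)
The paper does not supply a proof of this corollary, stating only that it is ``a routine exercise, which we omit, to deduce \ldots\ from Corollary~\ref{carlisle}.'' Your argument carries out precisely this exercise: you invoke Corollary~\ref{carlisle} to rewrite $j_R(X)$ as the minimum height of a prime in $\mathrm{supp}_{Z(R)}(X)$, and then verify the axioms by elementary support calculus and localisation. So the approach matches what the paper intends.

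One small inaccuracy in your finite-partitivity argument: you fix a single minimal prime $\mathfrak{q}_i$ over $\mathrm{Ann}_{Z(R)}(X_i/X_{i+1})$ of height $t-d$, and then assert that $(X_i/X_{i+1})_{\mathfrak{p}_j}\neq 0$ \emph{precisely} when $\mathfrak{q}_i=\mathfrak{p}_j$. The ``only if'' direction need not hold, since $X_i/X_{i+1}$ may well have several minimal primes of height $t-d$ in its support. However your bound survives: you only need the ``if'' direction, namely that the chosen index $j(i)$ with $\mathfrak{q}_i=\mathfrak{p}_{j(i)}$ gives a nonzero localisation, so that the number of indices $i$ assigned to any fixed $j$ is bounded by the length $n_j$ of $X_{\mathfrak{p}_j}$. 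Replace ``precisely when'' by ``whenever'' and the argument is clean.
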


\section{Homological hierarchy}\label{homhier}

\subsection{Injective and homological homogeneity}\label{hom}

\begin{definition}\label{injhom}Let $R$ be a noetherian ring which is a finite module over its centre $Z(R)$.

{\rm (i)} Let $I$ be an ideal of $R$. The \emph{(right) upper grade} of $I$ is $\mathrm{r.u.gr}(I) = \mathrm{max}\{ i : \mathrm{Ext}_R^i(R/I_{|R}, R_{|R}) \neq 0 \}$, (and is $\infty$ if no such maximum exists).

{\rm(ii)} $R$ is \emph{(right) injectively homogeneous} if $R$ has finite (right) injective dimension, and $\mathrm{r.u.gr.}(M) = \mathrm{r.u.gr.}(N)$ for all maximal ideals $M$ and $N$ of $R$ for which $M\cap Z(R) = N\cap Z(R)$.

{\rm(iii)} $R$ is \emph{(right) homologically homogeneous} if it is right injectively homogeneous and has finite global dimension.
\end{definition}
\begin{remarks}\label{injhomrems} (i) The original definitions \cite{BrH}, \cite {BrH2} of homological and injective homogeneity were more general than the above, in that $R$ was allowed to be merely integral over its centre. We have been more restrictive here for the sake of brevity.

(ii) The adjective ``right'' can be dropped from Definitions \ref{injhom}(ii) and (iii). This was shown in \cite[Corollaries 4.4 and 6.6]{BrH2} and is incorporated into the next theorem and its corollary.

(iii) Let $R$ be a commutative noetherian ring which is a finitely generated module over the central subring $C$. Dao, Faber and Ingalls, \cite[Definition 2.1]{DFI}, following Iyama and Wemyss, \cite[Definition 1.6]{IW}, call a ring $R$ a \emph{non-singular $C$-order} if $R$ is a maximal Cohen-Macaulay $C$-module and $\mathrm{gl.dim.}(R_{\mathfrak{p}}) = \mathrm{Kdim}(C_{\mathfrak{p}})$ for all primes $\mathfrak{p}$ of $C$. In doing so both sets of authors were following Van den Bergh \cite[$\S$3]{VdB}, who called such rings \emph{homologically homogeneous}, and noted that this usage coincided with Definition \ref{injhom}(iii), provided $C$ is equicodimensional. A proof of this claim is provided at \cite[Proposition 2.14]{DFI}. The interested reader should consult these papers, and also \cite{L}, for further background of the connections with noncommutative resolution of singularities.

\end{remarks}

\begin{theorem}\label{inj}

Let $R$ be a ring which is a finite module over its noetherian centre $Z(R)$. Then the following are equivalent.

{\rm(i)} $R$ is left injectively homogeneous.

{\rm(ii)} $\;$(1) $R$ is $Z(R)$-Macaualay;

\indent $\qquad$(2)  $R$ is grade symmetric; and

\indent $\qquad$(3) $R$ has finite left injective dimension.

{\rm(iii)} $\;$(1) $R$ is $Z(R)$-Macaualay;

\indent $\qquad$(2)  $R$ is grade symmetric; and

\indent $\qquad$(3) $R$ has finite right injective dimension.

{\rm(iv)} $R$ is right injectively homogeneous.
\end{theorem}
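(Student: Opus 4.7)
The plan is to establish the core equivalence (i)$\Leftrightarrow$(ii) and then invoke the left--right symmetry of injective homogeneity proved in \cite[Corollaries 4.4, 6.6]{BrH2}, which is precisely (i)$\Leftrightarrow$(iv) and was recalled in Remark \ref{injhomrems}(ii). The equivalence (iii)$\Leftrightarrow$(iv) then follows by running the proof of (i)$\Leftrightarrow$(ii) verbatim, interchanging ``left'' and ``right'' throughout, and the cycle closes.

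For (i)$\Rightarrow$(ii), condition (3) is immediate from the definition. Condition (1) (that $R$ is $Z(R)$-Macaulay) is supplied by the earlier Theorem \ref{hom} (already cited in Example \ref{CMex}(vi)). For grade symmetry (2), I leverage (i)$\Leftrightarrow$(iv) to assume $R$ is injectively homogeneous on both sides. Given a prime $P$ of $R$ with $\mathfrak p = P \cap Z(R)$, central localization at $\mathfrak p$ preserves both injective homogeneity and the Macaulay condition (Theorem \ref{CMprops}(iii)). Working in the local ring $R_{\mathfrak p}$, the $Z(R)$-Macaulay hypothesis combined with injective homogeneity yields, by standard Bass-number analysis (see \cite{BrH, BrH2}), the identity $\mathrm{l.inj.dim}(R_{\mathfrak p}) = \mathrm{Kdim}(R_{\mathfrak p}) = \mathrm{ht}(\mathfrak p)$. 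Together with Lemma \ref{grades}(i) this forces $j_R^{\ell}(R/P) = \mathrm{ht}(\mathfrak p) = G(\mathfrak p, R)$, and the symmetric right-handed argument gives the corresponding equality for $j_R^{r}(R/P)$. The implication (ii)$\Rightarrow$(i) of Theorem \ref{gradequiv} then delivers grade symmetry.

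For (ii)$\Rightarrow$(i), let $M$ be a maximal ideal of $R$ with $\mathfrak m := M \cap Z(R)$. Central localization at $\mathfrak m$ preserves (1) (Theorem \ref{CMprops}(iii)), (2) (Corollary \ref{local}(i)) and (3) (exactness of central localization), so we may pass to the local setting in which $Z(R)$ is local with maximal ideal $\mathfrak m$. Lemma \ref{muller}(ii) gives $j_R^{\ell}(R/M) = \mathrm{ht}(\mathfrak m) =: d$, and by Corollary \ref{resolve} the indecomposable injective $E(R/M)$ first appears in the minimal injective resolution of $\,_{R}R$ at position $d$. The heart of the matter is to show that this first appearance is also the last, i.e.\ $\mathrm{l.inj.dim}(R) = d$; once this identity is established, $\mathrm{l.u.gr}(M) = d$ for every maximal ideal $M$ of $R$, a quantity depending only on $\mathfrak m$, so $R$ is left injectively homogeneous.

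The main obstacle is thus the noncommutative Bass-type identity $\mathrm{l.inj.dim}(R) = \mathrm{Kdim}(R)$ in the local grade symmetric $Z(R)$-Macaulay setting with $\mathrm{l.inj.dim}(R)$ finite. One inequality is immediate from Lemma \ref{muller}(ii). For the reverse, I would adapt Bass's propagation argument: if some Bass number $\mu_i(P, R)$ is nonzero with $i > \mathrm{ht}(\mathfrak p)$, then combining Corollary \ref{carlisle} with the clique structure furnished by M\"uller's theorem (\cite[Theorem III.9.2]{BG}) produces a prime $P' \supsetneq P$ for which $\mu_{i+1}(P', R) > 0$; iterating this step would force Bass numbers to appear at arbitrarily high degrees, contradicting the assumed finiteness of $\mathrm{l.inj.dim}(R)$ unless $i = \mathrm{ht}(\mathfrak p)$ at every stage, which is exactly the bound sought.
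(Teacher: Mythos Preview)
Your overall architecture matches the paper's: reduce to (i)$\Leftrightarrow$(ii), then close the cycle via \cite[Corollary 4.4]{BrH2}. Your argument for (i)$\Rightarrow$(ii) is correct, though more circuitous than the paper's---there one simply quotes \cite[Theorem 3.4]{BrH2} for the $Z(R)$-Macaulay property and \cite[Theorem 5.3]{BrH2} (which gives $j_R^{\ell}(R/P) = \mathrm{ht}(P)$ directly for injectively homogeneous rings) to feed into Theorem \ref{gradequiv}(ii); no localisation or Bass-number discussion is needed.

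The gap is in your (ii)$\Rightarrow$(i). You correctly identify the crux as the equality $\mathrm{l.inj.dim}(R)=\mathrm{Kdim}(R)$ in the local case, but your proposed ``Bass propagation'' is only a sketch, and the step you flag---that $\mu_i(P,R)>0$ with $i>\mathrm{ht}(\mathfrak p)$ forces $\mu_{i+1}(P',R)>0$ for some $P'\supsetneq P$---is not delivered by the tools you cite. Corollary \ref{carlisle} computes grades (i.e.\ \emph{first} occurrences in the minimal injective resolution), not higher Bass numbers, and M\"uller's theorem links primes lying over the \emph{same} central prime rather than producing strictly larger ones. The noncommutative analogue of Bass's lemma that you would need is genuinely nontrivial and is not established anywhere in this paper.

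The paper sidesteps all of this with a short direct argument. With $Z(R)$ local, $\{x_1,\dots,x_t\}$ a maximal $Z(R)$-sequence, and $I=\sum_i x_iR$, Rees's change-of-rings theorem gives $\mathrm{l.inj.dim}_{R/I}(R/I)=n-t$ where $n=\mathrm{l.inj.dim}(R)$, and Theorem \ref{gradequiv}(ii) shows every simple left $R$-module embeds in $R/I$. Now \cite[Lemma 3.1]{BrH2} supplies a simple $Y$ with $\mathrm{Ext}_R^n(Y,R)\neq 0$, hence $\mathrm{Ext}^{n-t}_{R/I}(Y,R/I)\neq 0$; applying $\mathrm{Hom}_{R/I}(-,R/I)$ to $0\to Y\to R/I\to (R/I)/Y\to 0$ and using that $\mathrm{Ext}^{n-t+1}_{R/I}(-,R/I)=0$ forces $\mathrm{Ext}^{n-t}_{R/I}(R/I,R/I)\neq 0$, whence $n=t$. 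This two-line manoeuvre replaces your entire propagation scheme, and the global case then follows from Corollary \ref{local}(i) together with \cite[Lemma 3.3]{BrH2}.
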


\begin{proof} Since (i) is equivslent to (iv) by \cite[Corollary 4.4]{BrH2}, it is enough to prove that (i) and (ii) are equivalent, since the equivalence of (iii) and (iv) will follow by a parallel argument.

{\rm(i)}$\Rightarrow${\rm(ii)}: Suppose that $R$ is left injectively homogeneous. Then (1) is assured by (i)$\Rightarrow$(ii) of \cite[Theorem 3.4]{BrH2}. For (2), since $R$ is $Z(R)$-Macaulay, it is enough to prove (ii) of Theorem \ref{gradequiv}. So, let $P$ be a prime ideal of $R$. Then, as required, $$j_R^{\ell}(R/P) = \mathrm{ht}(P) = G_{Z(R)}(\mathfrak{p},R),$$ the first equality by \cite[Theorem 5.3]{BrH2} and the second by Theorem \ref{CMprops}(ii). Finally, (3) is part of the definition of the injectively homogeneous property.

{\rm(ii)}$\Rightarrow${\rm(i)}: Suppose that $R$ satisfies properties (1), (2) and (3), with $\mathrm{l.inj.dim}(R) := n < \infty$. Consider first the special case where $Z(R)$ is local, with maximal ideal $\mathfrak{m}$. Let $x_1, \ldots , x_t$ be a maximal $Z(R)$-sequence in $\mathfrak{m}$ on $R$, and set $I = \sum_i Rx_i$. Notice that
\begin{equation} \label{bound} t \leq n \quad \textit{ and } \quad \mathrm{injdim}(R/I) = n-t,\end{equation}
by Rees's Change of Rings theorem \cite[Theorem 9.37]{R}, (see (\ref{change})). By (i)$\Rightarrow$(ii) of Theorem \ref{gradequiv}, every simple left $R$-module $X$ has
\begin{equation} \label{finch} j_R^{\ell}(X) = t;\end{equation}
hence, by (\ref{change}) again, each of these simple modules occurs in the left socle of $R/I$. By \cite[Lemma 3.1]{BrH2}, there is a simple left $R$-module $Y$ with $\mathrm{Ext}_R^n(Y,R) \neq 0.$ Applying (\ref{change}) again shows that $\mathrm{Ext}_{R/I}^{n-t}(Y,R/I) \neq 0.$ To the exact sequence of left $R/I$-modules
$$ 0 \rightarrow Y \rightarrow R/I \rightarrow (R/I)/Y \rightarrow 0 $$
we can apply the functor $\mathrm{Hom}_{R/I}(-, R/I)$ to get the exact sequence
\begin{equation}\label{fine} \mathrm{Ext}_{R/I}^{n-t}(R/I,R/I) \rightarrow \mathrm{Ext}_{R/I}^{n-t}((R/I)/Y,R/I)\rightarrow \mathrm{Ext}_{R/I}^{n-t+1}(Y,R/I). \end{equation}
The last term of (\ref{fine}) is 0 by (\ref{bound}), so the first term must be non-zero. Hence, $n = t$.

It follows from this and (\ref{finch}) that $\mathrm{l.u.gr.}(M) = t$ for every maximal ideal $M$ of $R$. Therefore $R$ is injectively homogeneous.

Now drop the hypothesis that $Z(R)$ is local. For every maximal ideal $\mathfrak{m}$ of $Z(R)$, $R_{\mathfrak{m}}$ satisfies hypotheses (1), (2) and (3), with $\mathrm{l.inj.dim}(R_{\mathfrak{m}}) \leq n$, by Corollary \ref{local}(i) and the behaviour of $\mathrm{Ext}$ under central localisation, \cite[11.58]{R}. So $R_{\mathfrak{m}}$ is left injectively homogeneous by the above argument. Therefore $R$ is left injectively homogeneous by the local-global property of injective homogeneity, \cite[Lemma 3.3]{BrH2}.
\end{proof}

\begin{corollary}\label{hom}

Let $R$ be a ring which is a finite module over its noetherian centre $Z(R)$. Then the following are equivalent.

{\rm(i)} $R$ is homologically homogeneous.

{\rm(ii)} $\;$(1) $R$ is $Z(R)$-Macaualay;

$\qquad$(2)  $R$ is grade symmetric; and

$\qquad$(3) $R$ has finite global dimension.

\end{corollary}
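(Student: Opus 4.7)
The plan is to deduce Corollary \ref{hom} directly from Theorem \ref{inj}, with the only extra ingredient being the elementary observation that finite global dimension forces finite left and right self-injective dimension. The proof therefore splits into the two implications, each of which is essentially a one-line reduction.

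For $(\mathrm{i}) \Rightarrow (\mathrm{ii})$, I would unwind Definition \ref{injhom}(iii): the assumption that $R$ is homologically homogeneous gives immediately that $R$ is (right) injectively homogeneous and that $\mathrm{gl.dim}(R) < \infty$, which is precisely condition $(\mathrm{ii})(3)$. Applying Theorem \ref{inj} $(\mathrm{iv})\Rightarrow(\mathrm{iii})$ then yields $(\mathrm{ii})(1)$ and $(\mathrm{ii})(2)$ at once.

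For the converse $(\mathrm{ii}) \Rightarrow (\mathrm{i})$, I would argue as follows. Suppose $(1)$, $(2)$ and $(3)$ hold, and set $n := \mathrm{gl.dim}(R) < \infty$. Since $\mathrm{Ext}^i_R(M,N)$ vanishes for all $R$-modules $M,N$ and all $i>n$, the module $R$ has both left and right injective dimension at most $n$ over itself. Combining this with conditions $(1)$ and $(2)$, Theorem \ref{inj} $(\mathrm{ii})\Rightarrow(\mathrm{i})$ (equivalently $(\mathrm{iii})\Rightarrow(\mathrm{iv})$) produces that $R$ is injectively homogeneous. Tacking on the hypothesis $\mathrm{gl.dim}(R) < \infty$ again, we meet Definition \ref{injhom}(iii) and conclude that $R$ is homologically homogeneous.

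There is no genuine obstacle beyond Theorem \ref{inj}: the statement of the corollary is obtained from that theorem simply by strengthening ``finite injective dimension'' to ``finite global dimension'' on both sides of the equivalence, and the upgrade goes through trivially in one direction (global dimension dominates self-injective dimension) and is transported across by Theorem \ref{inj} in the other. The only point that deserves a brief remark, if any, is that although Definition \ref{injhom}(iii) is stated as ``right'' homologically homogeneous, the equivalence of left and right injective homogeneity recorded in Remarks \ref{injhomrems}(ii) (and Theorem \ref{inj}) renders the side-ambiguity harmless.
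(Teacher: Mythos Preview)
Your proposal is correct and follows essentially the same route as the paper's proof: both directions are reduced to Theorem \ref{inj}, using that a homologically homogeneous ring is by definition injectively homogeneous with finite global dimension. The only cosmetic difference is in the last line of $(\mathrm{ii})\Rightarrow(\mathrm{i})$: where you invoke Definition \ref{injhom}(iii) directly (injectively homogeneous plus finite global dimension equals homologically homogeneous), the paper instead cites \cite[Theorem 6.5]{BrH}; given the way Definition \ref{injhom}(iii) is phrased in this paper, your appeal to the definition is entirely adequate.
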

\begin{proof}{\rm(i)}$\Rightarrow${\rm(ii)}:Since homologically homogeneous rings are injectively homogeneous with finite global dimension, this is immediate from the theorem.

{\rm(ii)}$\Rightarrow${\rm(i)}: Suppose that $R$ satisfies (1), (2) and (3). By Theorem \ref{inj}, $R$ is injectively homogeneous. Since $R$ has finite global dimension, it is homologically homogeneous by \cite[Theorem 6.5]{BrH}.
\end{proof}

\section{Speculations beyond PI} \label{dream} Throughout this section $k$ is an arbitrary field. Given integers $i$ and $n$ and a complex $B$, the \emph{shift operator} $[n]$ is defined by $B[n]_i := B_{n+i}.$ We discuss here what might be the appropriate extension of the definition of a Cohen-Macaulay ring beyond the commutative setting.

For the definition and basic properties of rigid dualizing complexes, see \cite{VdB2}, \cite{YZ}. So far as we are aware, it remains possible that \emph{every} affine noetherian $k$-algebra of finite GK-dimension has a rigid dualizing complex. When a noetherian ring $A$ does possess a rigid dualizing complex $R$, then $R$ is unique up to a unique isomorphism, \cite[Proposition 8.2(1)]{VdB2}, \cite[Theorem 5.2]{Y}. Here is the first of two ``generalised Cohen-Macaulay conditions" which will feature in this discussion.

\begin{definition} \label{AS-CM} (Van den Bergh, \cite[page 674]{VdB2}) A noetherian ring $A$ with a rigid dualizing complex $R$ is \emph{AS-Cohen Macaulay} if the complex $R$ is concentrated in one degree.
\end{definition}

Suppose that $A$ is an algebra of a type which has featured in previous sections of this paper, specifically, an indecomposible affine noetherian $k$-algebra which is $Z(A)$-Macaulay of Gel'fand-Kirillov dimension $n$. Then $A$ is equicodimensional by Theorem 3.4. Take a central polynomial subalgebra $C$ in $n$ variables over which $A$ is a finite module, as afforded by Noether normalisation. Then $A$ is a free $C$-module of finite rank by Theorem 3.7(ii). Hence, since $C[n]$ is the rigid dualizing complex of $C$, it follows from \cite[Example 3.11]{YZ}, \cite[Proposition 5.7]{Y} that
$$ R \quad := \quad \mathrm{RHom}_C(A, C[n]) $$
is the rigid dualizing complex of $A$. That is, we have obtained the following well-known fact, which for convenience we state as

\begin{proposition} \label{rigidexist} Let $A$ be an indecomposible affine noetherian $k$-algebra which is $Z(A)$-Macaulay of Gel'fand-Kirillov dimension $n$. Then $A$ is AS-Cohen Macaulay, with its rigid dualizing complex concentrated in the single degree $-n$.
\end{proposition}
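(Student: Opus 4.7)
The plan is to expand the outline sketched immediately before the statement, supplying three ingredients: (a) equicodimensionality of $A$; (b) a polynomial central subalgebra $C$ of Krull dimension $n$ over which $A$ is projective; and (c) a finite-morphism base-change formula for rigid dualizing complexes.

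For (a), I would apply Theorem \ref{affinesplit} with $C = Z(A)$: the theorem produces a ring direct-sum decomposition $A = \bigoplus_i A_i$ with each $A_i$ equicodimensional and $Z(A_i)$-Macaulay, and indecomposability of $A$ collapses the sum to a single term. For (b), the Artin-Tate lemma makes $Z(A)$ an affine $k$-algebra, so commutative Noether normalisation yields a polynomial subalgebra $C = k[x_1, \ldots, x_d] \subseteq Z(A)$ over which $Z(A)$, and hence $A$, is a finitely generated module. Here $d = \mathrm{Kdim}(C) = \mathrm{Kdim}(Z(A)) = \mathrm{Kdim}(A) = \mathrm{GKdim}(A) = n$, using the equality of Krull and GK dimensions for affine PI algebras recorded in Remark \ref{graderex}(ii). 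Since $C$ is regular and $A$ is equicodimensional and $Z(A)$-Macaulay, Theorem \ref{free}(ii) then gives that $A$ is a finitely generated projective left and right $C$-module.

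For (c), I would use the standard fact that the rigid dualizing complex of $C = k[x_1, \ldots, x_n]$ is $C[n]$, together with the base-change formula for rigid dualizing complexes along the finite centralising extension $C \hookrightarrow A$ (\cite[Example 3.11]{YZ}, \cite[Proposition 5.7]{Y}), which identifies the rigid dualizing complex of $A$ as
$$ R_A \;=\; \mathrm{RHom}_C(A, C[n]). $$
Because $A$ is $C$-projective, $\mathrm{RHom}_C(A, -)$ agrees with the underived functor $\mathrm{Hom}_C(A,-)$, so $R_A = \mathrm{Hom}_C(A, C)[n]$, a complex concentrated in the single degree $-n$. This is precisely the AS-Cohen Macaulay condition of Definition \ref{AS-CM}, and simultaneously pins down the degree of concentration. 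The only step I would not treat as routine is the invocation of the base-change formula for rigid dualizing complexes along $C \subseteq A$; every other ingredient comes either from the earlier results of this paper or from standard commutative algebra.
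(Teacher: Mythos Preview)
Your proposal is correct and follows essentially the same route as the paper's own argument: equicodimensionality via Theorem \ref{affinesplit} and indecomposability, Noether normalisation to a central polynomial subalgebra $C$ in $n$ variables, projectivity of $A$ over $C$ from Theorem \ref{free}(ii), and then the base-change formula from \cite[Example 3.11]{YZ}, \cite[Proposition 5.7]{Y} to identify the rigid dualizing complex as $\mathrm{RHom}_C(A,C[n]) \cong \mathrm{Hom}_C(A,C)[n]$. The paper actually states that $A$ is \emph{free} over $C$, which you obtain from projectivity by Quillen--Suslin, but this extra precision is not needed for the conclusion.
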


Note, however, that there is no grade symmetry component to the hypothesis or conclusion of the proposition, so something else is needed for a condition which would fit into a hypothetical generalised formulation of the homological hierarchy of $\S 5$.

In \cite[Definition 2.1]{YZ}, Yekutieli and Zhang introduce the concept of an \emph{Auslander} dualizing complex $R$ of an algebra $A$. By this they meant that, for all finitely generated $A$-modules $M$, the familiar Auslander property should be satisfied, but for the $A$-modules $\mathrm{Ext}^i_A (M, R)$, $(i \in \mathbb{Z},$ rather than the modules $\mathrm{Ext}^i_A (M, A)$, $(i \in \mathbb{N}$. They show that under a range of hypotheses on the algebra $A$, the rigid dualizing complex of $A$ is Auslander; and they ask \cite[Question 0.11]{YZ}, restated here for convenience as
\begin{question} \label{rigidAus} Is every rigid dualizing complex Auslander?
\end{question}
Suppose that $A$ is a noetherian $k$-algebra with an Auslander rigid dualizing complex $R$. Define the $R$-\emph{grade} of a finitely generated $A$-module $M$ to be
$$ j_R (M) \quad := \quad \mathrm{inf}\{i : \mathrm{Ext}^i_A (M, R) \neq 0 \} \in \mathbb{Z} \cup \infty. $$
Then Yekutieli and Zhang define the \emph{canonical dimension} of $M$ to be
\begin{equation} \label{flip}\mathrm{Cdim}_R (M) \quad := \quad - j_R (M) \quad \in \quad \mathbb{Z} \cup - \infty,
\end{equation}
and show in \cite[Theorem 2.10]{YZ} that, with these hypotheses on $A$,
$$\mathrm{Cdim}_R \textit{ is an exact finitely partitive dimension function}.$$
They then ask \cite[Question 3.15]{YZ}:

\begin{question} \label{Cdimsym}Let $A$  be a noetherian $k$-algebra with a rigid dualizing complex $R$. Is $\mathrm{Cdim_R}$ symmetric on central $A$-bimodules?
\end{question}

In \cite{YZ}, Yekutieli and Zhang exhibit a number of cases where a positive answer to Question \ref{Cdimsym} is known. One such setting arises from the following second ``generalised Cohen-Macaulay condition", slightly adapted here from the original by requiring rigidity:

\begin{definition}\label{flop} (Yekutieli, Zhang, \cite[Definition 2.24]{YZ}) Let $A$ be a noetherian $k$-algebra of finite Gel'fand-Kirillov dimension with an Auslander rigid dualizing complex. Then $A$ is \emph{dualizing GK-Macaulay} if there is an integer $c$ such that
\begin{equation}\label{gotcha} \mathrm{GKdim} (M) - \mathrm{Cdim}_R (M) \quad = \quad c \end{equation}
for all non-zero finitely generated left or right $A$-modules $M$.
\end{definition}

Yekutieli and Zhang use the term \emph{GKdim-Macaulay} for the above; we have introduced new terminology to avoid confusion with the more familiar GK-Macaulay condition recalled here at Definition \ref{graderex}(ii). Since the Gel'fand-Kirillov dimension is symmetric by \cite[Corollary 5.4]{KL}, $\mathrm{Cdim}_R$ \emph{is} symmetric when $A$ is dualizing GK-Macaulay.

Observe that being dualizing GK-Macaulay is a rather weak condition on an algebra. For example, we have:

\begin{theorem}\label{dualGK} (Yekutieli, Zhang) Let $A$ be an affine noetherian $k$-algebra which is a finite module over its centre. Then $A$ is dualizing GK-Macaulay.
\end{theorem}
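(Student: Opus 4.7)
\emph{Plan.} The approach is to construct the rigid dualizing complex of $A$ by transfer from the centre $Z(A)$, and then identify the canonical dimension with Gel'fand--Kirillov dimension via a purely commutative computation. The first task is to produce an Auslander rigid dualizing complex for $A$. By the Artin--Tate lemma \cite[Lemma 13.9.10]{MR}, $Z(A)$ is an affine commutative noetherian $k$-algebra and $A$ is a finite $Z(A)$-module on each side. Choosing a Noether normalization $k[\underline{x}] \hookrightarrow Z(A)$ with $n$ variables, where $n = \mathrm{GKdim}(Z(A))$, the complex $k[\underline{x}][n]$ is the rigid dualizing complex of $k[\underline{x}]$. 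The finite-morphism transfer of Yekutieli--Zhang (\cite[Example 3.11]{YZ}, \cite[Proposition 5.7]{Y}) then gives
$$R_{Z(A)} \;:=\; \mathrm{RHom}_{k[\underline{x}]}\bigl(Z(A), k[\underline{x}][n]\bigr) \qquad \text{and} \qquad R_A \;:=\; \mathrm{RHom}_{Z(A)}\bigl(A, R_{Z(A)}\bigr)$$
as the rigid dualizing complexes of $Z(A)$ and $A$ respectively. The Auslander property is preserved along both steps of the transfer, and the dualizing complex of a polynomial ring is Auslander, so $R_A$ is Auslander.

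The second task is to verify (\ref{gotcha}) with $c = 0$. For a finitely generated left $A$-module $M$, Hom--tensor adjunction yields
$$\mathrm{RHom}_A(M, R_A) \;\cong\; \mathrm{RHom}_{Z(A)}\bigl(M, R_{Z(A)}\bigr),$$
whence $j_{R_A}(M) = j_{R_{Z(A)}}(M|_{Z(A)})$ and therefore $\mathrm{Cdim}_{R_A}(M) = \mathrm{Cdim}_{R_{Z(A)}}(M|_{Z(A)})$. The problem thus reduces to the commutative identity $\mathrm{Cdim}_{R_C}(N) = \mathrm{Kdim}(N)$ for every finitely generated module $N$ over an affine commutative $k$-algebra $C$. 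Combined with $\mathrm{Kdim}(N) = \mathrm{GKdim}(N)$ from \cite[Theorem 4.5]{KL}, and with the preservation of GK dimension along the finite central extension $Z(A) \subseteq A$, this yields $\mathrm{Cdim}_{R_A}(M) = \mathrm{GKdim}(M)$, so $c = 0$. The right-module case is identical: the construction of $R_A$ is left--right symmetric, and GK dimension is symmetric by \cite[Corollary 5.4]{KL}.

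The main obstacle is the commutative identity $\mathrm{Cdim}_{R_C}(N) = \mathrm{Kdim}(N)$ when $C$ is not Cohen--Macaulay: in that case $R_C$ is not concentrated in a single degree, so one cannot simply read the grade off a canonical module. The cleanest way through is to localise at an arbitrary maximal ideal $\mathfrak{m}$ of $C$ and use Grothendieck local duality to identify $\mathrm{Ext}^*_{C_{\mathfrak{m}}}\bigl(N_{\mathfrak{m}}, (R_C)_{\mathfrak{m}}\bigr)$ with shifted local cohomology of $N_{\mathfrak{m}}$, whose least non-vanishing cohomological degree is $-\mathrm{Kdim}(N_{\mathfrak{m}})$; devissage on the finite set of associated primes of $N$, together with the localisation formulae for $\mathrm{Ext}$, then recovers the global identity. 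Once this classical commutative input is granted, the rest of the proof is the formal adjunction-and-transfer sketched above.
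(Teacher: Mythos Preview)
The paper gives no proof here beyond a citation: the result is attributed to Yekutieli and Zhang in the statement, and the proof reads in its entirety ``This follows immediately from \cite[Corollary 6.9 and Example 6.14]{YZ}.'' Your proposal is a sketch of the argument behind those citations, specialised to the module-finite-over-centre case: build $R_A$ by two finite transfers from a Noether normalisation through $Z(A)$, use derived Hom--tensor adjunction to identify $\mathrm{Cdim}_{R_A}(M)$ with $\mathrm{Cdim}_{R_{Z(A)}}(M|_{Z(A)})$, and then invoke the classical commutative equality $\mathrm{Cdim}_{R_C} = \mathrm{Kdim}$ for affine $k$-algebras $C$, together with $\mathrm{Kdim} = \mathrm{GKdim}$ on that category. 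The outline is sound and yields $c = 0$.

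Two remarks. First, your assertion that the Auslander property ``is preserved along both steps of the transfer'' is true but is not a one-line fact; in \cite{YZ} it is obtained from their general filtered machinery rather than from a bare finite-morphism transfer lemma, so you should point to the relevant result there rather than leave it as an aside. Second, the commutative identity you flag as the ``main obstacle'' is lighter than you make it: since $\mathrm{RHom}_C(C/\mathfrak{p}, R_C)$ is the rigid dualizing complex of $C/\mathfrak{p}$ (again by finite transfer), its lowest nonvanishing cohomology lies in degree $-\dim(C/\mathfrak{p})$, and the general module case follows from a prime filtration. Local duality certainly works, but the transfer-plus-filtration route is more in keeping with the rest of your argument and avoids the localisation bookkeeping.
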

\begin{proof} This follows immediately from \cite[Corollary 6.9 and Example 6.14]{YZ}.
\end{proof}

Combining this circle of ideas in our laboratory setting of centrally Macaulay rings yields the following suggestive result. Recall that \emph{grade symmetry} was defined at Definition \ref{symdef}(iii).

\begin{proposition}\label{hint} Let $A$ be an affine indecomposible $Z(A)$-Macaulay $k$-algebra of Gel'fand-Kirillov dimension $n$. Let $R$ be the rigid dualizing complex of $A$. Then the following are equivalent:

{\rm(i)} $A$ is grade symmetric;

{\rm(ii)}the map $j - j_R : \mathrm{mod}A \longrightarrow \mathbb{Z}$ is constant.

\end{proposition}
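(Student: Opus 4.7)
The plan is to reduce the claimed equivalence to Theorem \ref{opus} by first pinning down $j_R$ explicitly and then recognising the constancy of $j_A - j_R$ as a disguised form of the Krull--Macaulay condition.

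First I would apply Proposition \ref{rigidexist} to conclude that $R$ is concentrated in the single degree $-n$; in particular $R \cong \omega[n]$ for a single $A$-bimodule $\omega$, so $\mathrm{Ext}^i_A(M,R) \cong \mathrm{Ext}^{i+n}_A(M,\omega)$ for every finitely generated (left or right) $A$-module $M$. Evaluated at $M = A$ this gives $j_R(A) = -n$ and $\mathrm{Cdim}_R(A) = n$. Next I would invoke Theorem \ref{dualGK}: $A$ is dualizing GK-Macaulay, so there is an integer $c$ with $\mathrm{GKdim}(M) - \mathrm{Cdim}_R(M) = c$ for every non-zero finitely generated left or right $A$-module. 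Testing this at $M = A$, and using that $\mathrm{GKdim}(A) = n$, forces $c = 0$, whence
$$ j_R(M) \; = \; -\mathrm{GKdim}(M) \; = \; -\mathrm{Kdim}(M), $$
the second equality by Remarks \ref{graderex}(ii), since $A$ is affine and a finite module over its centre.

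Consequently $(j_A - j_R)(M) = j_A(M) + \mathrm{Kdim}(M)$, which takes the value $n$ at $M = A$. Thus $j_A - j_R$ is constant on $\mathrm{mod}\, A$ (separately on either side) if and only if $\mathrm{Kdim}(A) = \mathrm{Kdim}(M) + j_A(M)$ for every non-zero finitely generated $A$-module $M$ on both sides, i.e. if and only if $A$ is Krull--Macaulay in the sense of \S\ref{homdef}. Now Theorem \ref{affinesplit} guarantees that an indecomposable affine $Z(A)$-Macaulay algebra is automatically equicodimensional, so Theorem \ref{opus}, specifically the equivalence (i)$\Leftrightarrow$(iii), identifies Krull--Macaulayness of $A$ with grade symmetry, given the two already-assumed conditions. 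This yields (i)$\Leftrightarrow$(ii).

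The main obstacle is bookkeeping around the identification $c = 0$: one needs to be sure that the constant in Definition \ref{flop} is genuinely the same integer when computed from left modules and from right modules, and that $j_R(A)$ is unambiguously $-n$ in each direction. Both reduce to the fact that $R$ is a single $A$-bimodule up to a uniform shift, so two-sided evaluation at $A$ collapses to computing $H^{\bullet}(R)$ once. After that, the argument is a short chain of identifications capped by a citation of Theorem \ref{opus}.
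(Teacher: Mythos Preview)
Your argument is correct and follows essentially the same route as the paper: both use Theorem \ref{dualGK} to identify $j_R$ with $-\mathrm{GKdim}$ up to a constant, and Theorem \ref{opus} (with equicodimensionality from Theorem \ref{affinesplit}) to pass between grade symmetry and the Krull/GK-Macaulay condition. The only cosmetic differences are that you pin down the constant $c=0$ explicitly (the paper leaves $c$ unspecified, since only constancy of the difference matters), and for (ii)$\Rightarrow$(i) the paper argues a touch more directly by noting that $j_R$ is already symmetric on central bimodules (from dualizing GK-Macaulay), so constancy of $j-j_R$ transfers symmetry to $j$ without passing through Theorem \ref{opus}.
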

\begin{proof} By Theorem \ref{dualGK}, $A$ is dualizing GK-Macaulay.

(ii)$\Rightarrow$(i):  As noted above, immediately after Definition \ref{flop}, $\mathrm{Cdim}_R$ is symmetric since $A$ is dualizing GK-Macaulay.  By (\ref{flip}) this symmetry is also satisfied by $j_R$, so (i) follows from (ii).

(i)$\Rightarrow$(ii): Assume that $A$ is grade symmetric. Then $A$ is GK-Macaulay by Theorem \ref{opus}(i)$\Leftrightarrow$(ii). That is, for all finitely generated $A$-modules $M$,
$$ j(M) = n - \mathrm{GKdim}(M). $$
On the other hand, by (\ref{gotcha}) and (\ref{flip}) there exists $c \in \mathbb{Z}$ such that
$$ j_R(M) = c - \mathrm{GKdim}(M). $$
Thus (ii) follows.
\end{proof}

Guided therefore by Definition \ref{AS-CM} and by Proposition \ref{hint}, we might propose that
\begin{align*} \textit{the noetherian } k-\textit{algebra } A & \textit{ should} \textit{ be regarded as Cohen-Macaulay }\\
 &\Leftrightarrow \\
  A \textit{ has a rigid Auslander dualizing }& \textit{complex } R \textit{ concentrated in one degree,}\\ \textit{ and } j - j_R &\textit{ is constant.} \end{align*}
Of course, this suggestion immediately begs another question: how to determine whether a given algebra satisfies the suggested conditions.


 \end{document}